\title[Discrete Flux-Limited Gradient Flows]{Discretization of Flux-Limited Gradient Flows: $\Gamma$-convergence and numerical schemes}
\author{Daniel Matthes}
\email{Daniel Matthes <matthes@ma.tum.de>}
\thanks{This research was supported by the DFG Collaborative Research Center TRR 109 “Discretization in Geometry and Dynamics”}
\address{Daniel Matthes \\ Technische Universit\"at M\"unchen \\ Zentrum Mathematik \\ Boltzmannstr. 3 \\ D-85748 Garching \\ Germany}
\author{Benjamin S\"ollner}
\email{Benjamin S\"ollner <soellneb@ma.tum.de>}
\address{Benjamin S\"ollner \\ Technische Universit\"at M\"unchen \\ Zentrum Mathematik \\ Boltzmannstr. 3 \\ D-85748 Garching \\ Germany}
\newcommand{\dd}{\,\mathrm d}
\newcommand{\dn}{\mathrm d}
\newcommand{\eps}{\varepsilon}
\newcommand{\setR}{\mathbb R}
\newcommand{\setRinf}{\mathbb R\cup\{+\infty\}}
\newcommand{\oball}{{\mathbb B}}
\newcommand{\ball}{\overline{\mathbb B}}
\newcommand{\sint}{\int_{\setR^d}}
\newcommand{\dint}{\iint_{\setR^d\times\setR^d}}
\newcommand{\argmin}{\operatorname*{arg\,min}}
\newcommand{\supp}{\operatorname{supp}}
\newcommand{\id}{\mathrm{id}}
\newcommand{\diam}{\operatorname{diam}}
\newcommand{\devil}[1]{\iota_{#1}}
\newcommand{\diag}[1]{\operatorname{diag}\left(#1\right)}
\newcommand{\cc}{\mathbf c}
\newcommand{\ccc}{\mathbf C}
\newcommand{\nrg}{\mathcal E}
\newcommand{\diss}{\mathfrak D}
\newcommand{\anrg}{\mathcal E^\tau}
\newcommand{\ent}{\mathcal H}
\newcommand{\prb}{\mathcal P}
\newcommand{\dprb}{\mathcal P_{\delta}}
\newcommand{\spcs}{\mathcal P_2(\setR^d)}
\newcommand{\gspc}{\Gamma}
\newcommand{\wass}{\mathbf W}
\newcommand{\leb}{\mathcal L^d}
\newcommand{\cubes}[1]{\mathcal Q_{#1}}
\newcommand{\eins}{\mathbb{I}_\delta}
\newcommand{\KL}{\mathrm{KL}}
\newtheorem{lem}{Lemma}
\newtheorem{thm}{Theorem}
\newtheorem{prp}{Proposition}
\newtheorem{rmk}{Remark}
\begin{document}

\begin{abstract}
  We study a discretization in space and time for a class of nonlinear diffusion equations with flux limitation.
  That class contains the so-called relativistic heat equation, 
  as well as other gradient flows of Renyi entropies with respect to transportation metrics with finite maximal velocity.
  Discretization in time is performed with the JKO method, thus preserving the variational structure of the gradient flow.
  This is combined with an entropic regularization of the transport distance,
  which allows for an efficient numerical calculation of the JKO minimizers.
  Solutions to the fully discrete equations are
  entropy dissipating, mass conserving, and respect the finite speed of propagation of support. 

  First, we give a proof of $\Gamma$-convergence of the infinite chain of JKO steps
  in the joint limit of infinitely refined spatial discretization and vanishing entropic regularization.  
  The singularity of the cost function 
  makes the construction of the recovery sequence significantly more difficult than in the $L^p$-Wasserstein case.  
  Second, we define a practical numerical method by combining the JKO time discretization with a "light speed" solver for the spatially discrete minimization problem using Dykstra's algorithm,
  and demonstrate its efficiency in a series of experiments.
\end{abstract}

\maketitle

\section{Introduction}

\subsection{General idea}
In the field of numerical solution of transportation problems 
--- like estimation of Wasserstein distances, computation of barycenters, or parameter estimation ---
entropic regularization has been proven a versatile and impressively efficient tool.
Based on Cuturi's adaptation of the Sinkhorn algorithm for ``lightspeed computation of optimal transport'' \cite{cuturi},
a huge variety of highly efficient methods for various current applications of transport theory have been developed,
see the recent book \cite{PeyreBook} for an overview.
The focus has been mainly on image and data science,
but the ideas have been applied for numerical approximation of gradient flows as well, see e.g. \cite{Pey,CDPS}.
Here, we develop this approach further to define an efficient scheme 
for approximation of solutions to flux-limited equations of the type
\begin{align}
  \label{eq:eq}
  \partial_t\rho + \nabla\cdot\left[\rho\,a\big(\nabla h'(\rho)\big)\right] = 0,
  \quad
  \rho(0,\cdot)=\rho^0.
\end{align}
In that problem, the sought solution $\rho$ is a time-dependent probability density, 
either on $\Omega=\setR^d$ with finite second moment,
or on a bounded domain $\Omega\subset\setR^d$ with no-flux boundary conditions.
The given function $h:\setR_{\ge0}\to\setR$ is convex and super-linear,
and $a:\setR^d\to\ball$ is a monotone map into the closed unit ball $\ball$ of $\setR^d$.
This implies the aforementioned {flux limitation}, 
since \eqref{eq:eq} can be considered as a transport equation
with velocities $a(\nabla h'(\rho))$ of modulus less than one.

Our primary example will be Rosenau's relativistic heat equation \cite{rosenau},
\begin{align}
  \label{eq:rhe}
  \partial_t\rho = \nabla\cdot\left(\rho\,\frac{\nabla\rho}{\sqrt{\rho^2+|\nabla\rho|^2}}\right),
\end{align}
which is \eqref{eq:eq} with $h(r)=r(\log r-1)$ and $a(p)=(1+|p|^2)^{-1/2}p$.
This equation has been analyzed in great detail, 
mostly by Caselles and collaborators, see \cite{caselles1,caselles2,caselles3} and references therein.
Schemes for numerical solution of \eqref{eq:rhe} have been developed as well, see e.g. \cite{caselles0},
however, these are very different from the approach taken here.

In the definition of the entropic regularization of \eqref{eq:eq}, 
its discretization in space and time,
and the efficient numerical implementation,
we closely follow the blueprint laid out in \cite{Pey} for gradient flows in the $L^2$-Wasserstein metric.
In order to make that variational approach feasible,
we require a special structure of $a$, namely that it can be written in the form
\begin{align}
  \label{eq:afromccc}
  a(p) = \nabla\ccc^*(-p),
\end{align}
where $\ccc^*$ is the Legendre transform of a convex cost function $\ccc:\setR^d\to\setRinf$.
The flux limitation is implemented by requiring further 
that $\ccc$ is continuous on the closed unit ball $\ball$, 
equal to one on the boundary $\partial\oball$,
and is $+\infty$ outside of $\ball$.
As observed by Brenier \cite{brenier}, the relativistic heat equation \eqref{eq:rhe} fits into that framework,
by choosing $\ccc(v)=1-\sqrt{1-|v|^2}$ for $v\in\ball$.

\subsection{Gradient flow structure}
With the assumption \eqref{eq:afromccc} on $a$, 
\eqref{eq:eq} can be considered as a gradient flow 
on the space $\prb(\Omega)$ of probability measures on $\Omega$, at least formally.
We briefly recall the basic idea in a language that is suitable for formulation of our approximation later.
We refer e.g. to \cite{AGS,agueh,brenier,McCPue} for further details on the variational structure of \eqref{eq:eq}.

The potential of that gradient flow is the entropy functional
\begin{align}
  \label{eq:nrg}
  \nrg(\rho) := \int_\Omega h(\rho)\dd x.
\end{align}
And the respective dissipation $\diss(\rho;q)$ 
for a given ``tangential vector'' $q$ at $\rho\in\prb(\Omega)$ 
--- that is, $q\in L^1(\Omega)$ is of zero average ---
is defined by
\begin{align}
  \label{eq:defdiss}
  \diss(\rho;q) := \inf_{q=\nabla\cdot(\rho v)}\int_\Omega\ccc(v)\rho\dd x.
\end{align}
Here the infimum runs over all vector fields $v:\Omega\to\setR^d$ for which $q=\nabla\cdot(\rho v)$,
and equals infinity if there is no such $v$.
The integral in \eqref{eq:defdiss} represents the friction resulting from the infinitesimal motion 
of all mass elements in $\rho$ along the vector field $v$;
taking the infimum over $v$'s means that the infinitesimal mass elements move in the least dissipative way
to realize the macroscopic change determined by $q$.

A curve $\rho:\setR_{\ge0}\to\prb(\Omega)$ 
is of \emph{steepest descent in $\nrg$'s landscape with respect to $\diss$}
if at each instance $t_0>0$, the derivative $\partial_t\rho(t_0)$ is such that the sum
\begin{align}
  \label{eq:fakemin}
  \diss\big(\rho(t_0);\partial_t\rho(t_0)\big) + \frac{\dn}{\dd t}\Big|_{t=t_0}\nrg\big(\rho(t)\big)
\end{align}
is minimized, i.e., the decrease in energy is optimal with respect to the induced dissipation.
Assuming that $\rho(t_0)$ is smooth and positive everywhere, 
then a straight-forward calculation shows that the minimizing $\partial_t\rho(t_0)=\nabla\cdot(\rho(t_0)v(t_0))$
is determined by the vector field $v(t_0)$ that minimizes
\begin{align*}
  v\mapsto \int_\Omega \big[\ccc(v)\rho(t_0) + h'\big(\rho(t_0)\big) \nabla\cdot\big(\rho(t_0)v\big)\big]\dd x.
\end{align*}
In view of \eqref{eq:afromccc}, this produces the evolution equation \eqref{eq:eq}.

\subsection{Discretization and regularization}
To connect to the variational framework of optimal transport,
we perform a time-discrete approximation of \eqref{eq:fakemin} 
in the spirit of the minimizing movement scheme \cite{AGS}, 
which is often refered to as JKO method \cite{JKO} in the context of optimal transport.
For a given time step $\tau>0$, 
a sequence $(\rho^n)_{n=0}^\infty$ is constructed inductively:
given an approximation $\rho^{n-1}$ of $\rho((n-1)\tau)$, 
i.e., the solution $\rho$ to \eqref{eq:eq} at time $t=(n-1)\tau$, 
choose as approximation $\rho^n$ of $\rho(n\tau)$ the minimizer of
\begin{align}
  \label{eq:minprob1}
  \rho\mapsto 
  \inf_\gamma\iint_{\Omega\times\Omega}\cc_\tau(x,y)\dd\gamma(x,y) + 
  \frac1\tau\big[\nrg(\rho)-\nrg(\rho^{n-1})\big].
\end{align}
Above, the infimum runs over all probability measures $\gamma\in\prb(\Omega\times\Omega)$ 
on the product space $\Omega\times\Omega$
whose first and second marginal, denoted by $X\#\gamma$ and $Y\#\gamma$, respectively,
equal to $\rho^{n-1}$ and $\rho$.
Further, $\cc_\tau(x,y)$ is the $\ccc$-induced cost of the transport from $x$ to $y$ in time $\tau$;
if $\Omega$ is convex, then simply $\cc_\tau(x,y)=\ccc(\frac{y-x}\tau)$, 
i.e., $\cc_\tau(x,y)$ is the average dissipation induced by the motion of a unit mass element with constant velocity $v=\frac{y-x}\tau$.
The general definition of $\cc_\tau$ is given in Section \ref{sct:cost}.
In the language of optimal transport, $\gamma$ is a transport plan from $\rho^{n-1}$ to $\rho^n$:
roughly speaking, $\gamma(x,y)$ determines the amount of $\rho^{n-1}$'s mass at position $x$ 
to be moved to $\rho^n$'s mass at position $y$.
The double integral in \eqref{eq:minprob1} is visibly an approximation of the integral in \eqref{eq:fakemin}.

The difficulty in the numerical implementation of \eqref{eq:minprob1} is 
to calculate the infimum of the integral for given $\rho^{n-1}$ and $\rho$,
and its variation with respect to $\rho$.
A common approach is to go to the Lagrangian formulation, 
using that the optimal $\gamma$ is typically concentrated on the graph of a transport map $T:\Omega\to\Omega$.
This is extremely efficient in one space dimension \cite{BCC,osberger1,osberger2},
but becomes significantly more cumbersome --- and difficult to analyze --- 
in multiple dimensions \cite{BCMO,CDMM,Moll,JMO}.
Various alternatives to the Lagrangian approach are available, including
finite volume methods \cite{Maas}, blob methods \cite{Pata} etc.

Here, we use the ``lightspeed computation'' of the optimal plan $\gamma$
by employing entropic regularization to the minimization problem.
Recall that $\gamma$'s negative entropy is
\begin{align}
  \label{eq:ent}
  \ent(\gamma) = \iint_{\Omega\times\Omega}G(x,y)\log G(x,y)\dd(x,y)
\end{align}
if $\gamma=G\leb$ is absolutely continuous, and $\ent(\gamma)=+\infty$ otherwise.
Adding this as a regularization inside the dissipation term in \eqref{eq:minprob1},
we arrive at the new minimization problem
\begin{align}
  \label{eq:minprob2}
  \rho\mapsto 
  \inf_\gamma\left[\iint_{\Omega\times\Omega}\cc_\tau(x,y)\dd\gamma(x,y) +\eps\ent(\gamma)\right]
  + \frac1\tau\big[\nrg(\rho)-\nrg(\rho_\tau^{n-1})\big],
\end{align}
$\eps\ge0$ being the parameter of the regularization.
Finally, we discretize the problem \eqref{eq:minprob2} in space 
by restricting minimization to $\prb_\delta(\Omega)$, 
the set of absolutely continuous $\rho$'s whose densities are piecewise constant 
on the cells $Q$ of a given tesselation $\cubes{\delta}$ of $\Omega$;
here $\delta>0$ parametrizes the size of the cells $Q$, 
and $\delta\to0$ is the continuous limit.
It is further admissible to approximate $\cc_\tau$ by a more convient cost function $\cc_{\tau,\delta}$.
E.g., in the actual numerical experiments, 
we use a $\cc_{\tau,\delta}$ that is piecewise constant on the products $Q\times Q'$ of cells $Q,Q'\in\cubes{\delta}$;
this makes the minimization feasible in practice since it then suffices to consider 
only absolutely continuous $\gamma$'s that are piecewise constant on $Q\times Q'$.

In summary, for given $\eps\ge0$ and $\delta\ge0$ 
--- corresponding to a tesselation $\cubes{\delta}$ and a cost function $\cc_{\tau,\delta}$ ---
a time-discrete approximation $(\rho^n)_{n=0}^\infty$ of a solution to \eqref{eq:eq} is defined inductively by
\begin{align}
  \label{eq:jko}
  \rho^n:=Y\#\gamma^n,
  \quad\text{with}\quad \gamma^n:=\argmin\nrg_{\tau,\eps,\delta}\big(\gamma\big|\rho^{n-1}\big),
\end{align}
where, using the indicator functional $\devil{Q}$ that is zero if $Q$ is true, and $+\infty$ otherwise,
\begin{align}
  \label{eq:jko2}
  \nrg_{\tau,\eps,\delta}\big(\gamma\big|\bar\rho\big)
  = \iint_{\Omega\times\Omega}\cc_{\tau,\delta}(x,y)\dd\gamma(x,y) 
  + \eps\ent(\gamma) 
  + \frac 1 \tau \nrg(Y\#\gamma)
  + \devil{X\#\gamma=\bar\rho}
  + \devil{Y\#\gamma\in\prb_\delta(\Omega)}.
\end{align}
We remark that for $\eps>0$, the minimization problem is strictly convex, so the minimizer $\gamma^n$ is unique.
The situation is less clear for $\eps=0$, since uniqueness results for optimal plans in relativistic costs, like in \cite{puel1},
only apply if $\Omega$ is bounded.
Fortunately, even for $\eps=0$, it is easily seen that any minimizer $\gamma^n$ leads to one and the same density $\rho^n$,
which is uniquely determined thanks to the strict convexity of the entropy functional $\nrg$
(and the linearity of the transport term).

\subsection{Convergence result}
Our analytical result concerns the joint limit of 
infinitely refined spatial discretization $\delta\to0$ and vanishing entropic regularization $\eps\to0$.
\begin{thm}
  \label{thm:total}
  Assume $\Omega=\setR^d$, and that $\rho^0$ has finite second moment.
  Assume further that $h(r)=r^m$ with some $m>1$.

  Fix a time step $\tau>0$,
  and non-negative sequences $(\eps_k)$ and $(\delta_k)$ of entropic regularizations and spatial discretizations, respectively,
  that converge to zero.
  Under hypotheses on the tesselations $\cubes{\delta_k}$ and cost functions $\cc_{\tau,\delta_k}$
  that are detailed in Section \ref{sct:spcdisc} below,
  the inductive scheme in \eqref{eq:jko}, with $\eps=\eps_k$ and $\delta=\delta_k$, is well-defined 
  and produces time-discrete approximations $(\rho_k^n)_{n=0}^\infty$ for each $k$.
  Moreover, $\rho_k^n\to\rho^n$ narrowly and weakly in $L^m(\setR^d)$ as $k\to\infty$, for each $n$, 
  and $(\rho^n)_{n=0}^\infty$ is a sequence of minimizers in \eqref{eq:minprob1}.
\end{thm}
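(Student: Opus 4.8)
The plan is to argue by induction on the step index $n$, treating each JKO step as a single variational problem and applying the fundamental theorem of $\Gamma$-convergence. The base case is the convergence $\rho_k^0\to\rho^0$ of the prescribed discrete initial datum (the mesh projection of $\rho^0$), which holds by construction, and even in $\wass_2$, since that projection moves mass only within cells, over distances $O(\delta_k)$. For the inductive step I assume $\rho_k^{n-1}\to\rho^{n-1}$ in a suitable mode (see below) and regard $\gamma\mapsto\nrg_{\tau,\eps_k,\delta_k}(\gamma\,|\,\rho_k^{n-1})$ as a sequence of functionals on $\dspc$, to be compared with the limit functional $F(\gamma):=\dint\cc_\tau(x,y)\dd\gamma(x,y)+\tfrac1\tau\nrg(Y\#\gamma)+\devil{X\#\gamma=\rho^{n-1}}$, which is what \eqref{eq:jko2} becomes upon setting $\eps=0$, replacing $\cc_{\tau,\delta_k}$ by $\cc_\tau$, dropping the constraint $Y\#\gamma\in\prb_\delta(\setR^d)$, and fixing $\bar\rho=\rho^{n-1}$. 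I then establish: (a) equi-coercivity, so that the minimizers $\gamma_k^n$ exist and are precompact narrowly, with second marginals precompact weakly in $L^m$; (b) the $\Gamma$-liminf inequality; and (c) a recovery sequence for the $\Gamma$-limsup inequality. By the fundamental theorem of $\Gamma$-convergence every subsequential limit of $(\gamma_k^n)$ minimizes $F$; by convexity of $F$ together with strict convexity of $\nrg$ on $L^m$, all minimizers of $F$ have one and the same second marginal, call it $\rho^n$, so the full sequence $\rho_k^n=Y\#\gamma_k^n$ converges to $\rho^n$. Finally, minimizing the transport term of $F$ over $\gamma$ with prescribed marginals $\rho^{n-1},\rho$ reproduces the $\cc_\tau$-optimal transport cost between them, so minimizing $F$ over $\gamma$ is the same as minimizing \eqref{eq:minprob1} over $\rho$; hence $\rho^n$ is a JKO minimizer and the induction closes.

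\textbf{Equi-coercivity.} The hypotheses on $\cc_{\tau,\delta_k}$ in Section~\ref{sct:spcdisc} force the discrete cost to be $+\infty$ once $|y-x|>\tau+o(1)$, so every $\gamma$ of finite energy is concentrated on $\{|y-x|\le\tau+o(1)\}$; hence $\sint|y|^2\dd(Y\#\gamma)\le 2\sint|x|^2\dd\rho_k^{n-1}+O(\tau^2)$, and inductively the second moments of $\rho_k^n$, and so of $\gamma_k^n$, are bounded uniformly in $k$. Comparing the minimal value with an explicit competitor --- e.g.\ the plan sending each $x$ to $x+z$ with $z$ drawn from a fixed smooth kernel supported in a small ball, whose second marginal is an $L^\infty$-bounded mollification of $\rho_k^{n-1}$, re-projected onto $\prb_{\delta_k}(\setR^d)$ (which does not enlarge the $L^\infty$ norm) --- bounds $\tfrac1\tau\nrg(\rho_k^n)+\eps_k\ent(\gamma_k^n)$ uniformly. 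Since $\ent$ is bounded below on probability measures of uniformly bounded second moment (Gaussian comparison), $\eps_k\ent(\gamma_k^n)\ge-C\eps_k$, so $\nrg(\rho_k^n)=\|\rho_k^n\|_{L^m}^m$ stays bounded; with the moment bound and reflexivity of $L^m$ ($m>1$) this gives the claimed precompactness. The same argument (lower semicontinuity plus coercivity) shows each problem in \eqref{eq:jko} admits a minimizer, unique --- and $\rho_k^n$ then well-defined --- for $\eps_k>0$, while for $\eps_k=0$ strict convexity of $\nrg$ alone already pins down $\rho_k^n$.

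\textbf{The $\liminf$ inequality.} This part is routine. For $\gamma_k\to\gamma$ narrowly with $Y\#\gamma_k\rightharpoonup Y\#\gamma$ in $L^m$: the transport term passes to the liminf because $\cc_\tau$ is lower semicontinuous and bounded below and $\cc_{\tau,\delta_k}$ approximates $\cc_\tau$ in the ($\Gamma$-convergent) sense assumed in Section~\ref{sct:spcdisc}; $\nrg(Y\#\gamma_k)=\sint(Y\#\gamma_k)^m$ is weakly $L^m$-lower semicontinuous by convexity of $t\mapsto t^m$; $\eps_k\ent(\gamma_k)\ge-C\eps_k\to0$ by the moment bound; the first-marginal constraint survives by narrow continuity of marginals together with $\rho_k^{n-1}\to\rho^{n-1}$; and the constraint $Y\#\gamma\in\prb_{\delta_k}(\setR^d)$ disappears as $\delta_k\to0$. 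Summing these contributions gives the lower bound.

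\textbf{The recovery sequence --- the main obstacle.} This is the genuinely hard step, as announced in the abstract: the singularity of $\ccc$ makes it far harder than in the $L^p$-Wasserstein case. Starting from a minimizer $\gamma^n$ of $F$ --- a $\cc_\tau$-optimal plan from $\rho^{n-1}$ to $\rho^n$, of finite cost, hence concentrated on $\{|y-x|\le\tau\}$ --- one must build admissible competitors $\gamma_k$, with $X\#\gamma_k=\rho_k^{n-1}$ and $Y\#\gamma_k\in\prb_{\delta_k}(\setR^d)$, converging to $\gamma^n$ narrowly and in the weak $L^m$-sense of the second marginals, along which $\eps_k\ent(\gamma_k)\to0$ and $\nrg(Y\#\gamma_k)\to\nrg(\rho^n)$ (the cost term then converges automatically, since the $\gamma_k$ stay in the region where $\cc_\tau$ is continuous). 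The obstruction is that $\gamma^n$ may put mass exactly on the singular set $\{|y-x|=\tau\}$, where $\cc_\tau=1$, so that \emph{any} smoothing of $\gamma^n$ --- forced on us, when $\eps_k>0$, to make $\ent(\gamma_k)$ finite, and in general to match $\gamma_k$ to the mesh-discretized data --- moves some mass at speed above one, where the cost is $+\infty$. The remedy I would use is to \emph{contract displacements first}: replace $\gamma^n$ by its pushforward under $(x,y)\mapsto\bigl(x,\,x+\lambda_k(y-x)\bigr)$ with $\lambda_k\uparrow1$, which concentrates the plan on $\{|y-x|\le\lambda_k\tau\}$ and leaves a safety margin of width $(1-\lambda_k)\tau$, while continuity and convexity of $\ccc$ keep its transport cost converging to that of $\gamma^n$. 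Within that margin one may then mollify (at a scale $\sigma_k\ll(1-\lambda_k)\tau$, staying below light speed, with $\ent$ growing only like $|\log\sigma_k|$), glue in a small-displacement plan from $\rho_k^{n-1}$ to $\rho^{n-1}$ to correct the first marginal, and redistribute conditional measures over the cells of $\cubes{\delta_k}$ to land $Y\#\gamma_k$ in $\prb_{\delta_k}(\setR^d)$. The technical heart is the diagonal bookkeeping that reconciles the scales $1-\lambda_k$, $\sigma_k$, $\delta_k$ with $\eps_k$ --- coarse enough that $\eps_k\ent(\gamma_k)\to0$, fine enough not to inflate $\nrg(Y\#\gamma_k)$ past $\nrg(\rho^n)$, and always below light speed --- together with the separate subtlety that the first-marginal correction needs $\rho_k^{n-1}$ to approach $\rho^{n-1}$ in a mode strong enough to be realized by displacements below $(1-\lambda_k)\tau$, which seems to force carrying such a strengthened convergence through the induction. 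Once the recovery sequence is available, (a)--(c) combine through the fundamental theorem of $\Gamma$-convergence to give the asserted narrow and weak-$L^m$ convergence $\rho_k^n\to\rho^n$ and the identification of $\rho^n$ as a minimizer in \eqref{eq:minprob1}.
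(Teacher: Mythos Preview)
Your overall architecture --- induction on $n$, $\Gamma$-convergence of the one-step functionals, equi-coercivity, and uniqueness of the limiting second marginal via strict convexity of $\nrg$ --- matches the paper exactly; the paper packages this as Proposition~\ref{thm:main} and deduces the theorem by precisely your induction, carrying $\wass_2$-convergence (your ``strengthened convergence'') through the steps. Two points need correction, one minor and one substantive.

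\emph{Minor.} The hypotheses in Section~\ref{sct:spcdisc} do \emph{not} force $\cc_{\tau,\delta_k}(x,y)=+\infty$ once $|y-x|>\tau+o(1)$; condition~\eqref{eq:ccbelow} only imposes the finite quadratic lower bound $1-\alpha_k+\alpha_k^{-1}(|y-x|-\tau)^2$. Your moment bound and the accompanying lower bound on $\eps_k\ent$ therefore need a short extra argument (this is Lemma~\ref{lem:anrgbelow} in the paper), but it is routine once noticed.

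\emph{Substantive: the recovery sequence.} Your radial contraction $(x,y)\mapsto\bigl(x,\,x+\lambda_k(y-x)\bigr)$ is the natural first idea for creating a safety margin below light speed, and it does confine the support to $\{|y-x|\le\lambda_k\tau\}$ with convergent transport cost. The step you do not resolve is control of $\nrg$ on the new second marginal: the $Y$-marginal of the contracted plan is $\bigl((1-\lambda_k)X+\lambda_k Y\bigr)\#\gamma_*$, and nothing forces this to lie in $L^m$ with norm close to $\|\eta_*\|_{L^m}$. For a $\gamma_*$ concentrated on a graph $y\mapsto x(y)$ with $|x(y)-y|\le\tau$, the interpolating map $y\mapsto\lambda_k y+(1-\lambda_k)x(y)$ develops a fold wherever $x'(y)$ is large and negative, producing a $|z-z_c|^{-1/2}$ singularity in the pushed density --- so the contracted second marginal falls out of $L^m$ for every $m\ge2$. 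No regularity of the $\cc_\tau$-optimal map is available here that would exclude this for the actual minimizer $\gamma^n$, and in any case the paper proves the full $\Gamma$-limsup, i.e.\ a recovery sequence for \emph{every} admissible $\gamma_*$.

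The paper avoids this obstruction by a genuinely different geometric device: rather than contracting radially, it decomposes $\gamma_*$ according to the $2^d$ orthants of the displacement $y-x$ via a smooth partition of unity $(\vartheta^\beta)_\beta$, and then \emph{translates} each piece in the $y$-variable by a fixed vector $-\sigma_k\beta$ pointing into the opposite orthant. Translations are $L^m$-isometries, so each piece's second marginal keeps its $L^m$ norm, and the $2^d$ translated marginals reassemble to a strong $L^m$-approximation of $\eta_*$ (Lemma~\ref{lem:etastrong}). The mass that the first-marginal correction via $T_k$ pushes outside the cylinder is isolated as a remainder $\gamma^{(2,0)}_k$ of total mass $O(\omega_k)$ (Lemma~\ref{lem:outsidesmall}) and redeposited near the diagonal against a fixed smooth kernel. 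This orthant-by-orthant translation, with the Chebyshev-type control of the remainder, is exactly the missing idea that replaces your contraction step.
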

We emphasize that the special cases 
$\eps_k\equiv0$ (spatial discretization without entropic regularization)
and $\delta_k\equiv0$ (entropic regularization without spatial discretization) are included. 
Further, we remark that the choice $\Omega=\setR^d$ is mainly made for definiteness; 
the proof is actually slightly more difficult than in the case of bounded $\Omega$.
Also, $h(r)=r^m$ has been chosen to simplify the presentation;
the method of proof would apply to any convex $h$ that has superlinear growth at infinity.

The proof is based on the $\Gamma$-convergence of the functional in \eqref{eq:jko2} to the one in \eqref{eq:minprob1} without $\nrg(\rho^{n-1})$,
which is made precise in Proposition \ref{thm:main} below.
That $\Gamma$-limit would be fairly easy to obtain in the situation of regular cost functions, 
i.e., when $\ccc$ is a continuous and strictly convex function on all of $\setR^d$.
In the flux limited situation that we consider here, the construction of the recovery sequence is surprisingly delicate.

We emphasize that we do not consider the passage $\tau\to0$ 
from the JKO method \eqref{eq:minprob1} to a solution of the PDE \eqref{eq:eq}.
That kind of limit has been studied extensively, albeit rarely in the flux-limited case.
Particularly for $L^2$-Wasserstein gradient flows, corresponding to $\ccc(v)=\frac12|v|^2$ and to $a(p)=p$, 
the existing literature is huge, and also covers much more general nonlinearities in \eqref{eq:eq} than just $h'(\rho)$.
The JKO method has been used to construct solutions
to linear and non-linear Fokker-Planck equations \cite{ottoPME}, 
to degenerate fourth order parabolic equations \cite{ottoTFE},
to PDEs with non-local terms \cite{BCC}, 
to coupled systems \cite{LauMat},
and many more.
There are fewer results on a JKO-like variational approximation of \eqref{eq:eq} 
with a non-linear power functions $a(\xi)=|\xi|^{p-2}\xi$, with $p\neq2$;
this includes in particular the $p$-Laplace-equations.
The corresponding theory of gradient flows in the $L^q$-Wasserstein metric with $\ccc(v)=\frac1q|v|^q$ 
(with $q=p'\neq2$) has been developed in \cite{AGS,agueh}.
Finally, concerning the situation of interest here, which is \eqref{eq:eq} with flux-limitation:
the analysis is significantly more challenging in that situation,
but still, the limit $\tau\to0$ has been carried out successfully
on the JKO-like variational approximation of the relativistic heat equation
in a work of McCann and Puel \cite{McCPue}.
The techniques developed therein should apply to the more general class \eqref{eq:eq} considered here.
We remark that the concept of solution used in \cite{McCPue} is much weaker than
in the situation of convex gradient flows in the $L^2$-Wasserstein distance \cite{AGS}.
For instance, uniqueness of the limit curve for $\tau\to0$ is unknown,
despite unique solvability of the minimization problem \eqref{eq:minprob1}.

To the best of our knowledge, our result is the first one that rigorously shows the stability of the minimizers
in the JKO scheme under entropic regularization.
In a related problem, namely for \eqref{eq:eq} with $a(\xi)=\xi$, i.e., in the $L^2$-Wasserstein case,
the combined limit of $\tau\to0$ and $\eps\to0$ (without spatial discretization, $\delta=0$)
has been carried out by Carlier et al \cite{CDPS}.
Also there, the $\Gamma$-limit of an entropically regularized transport is studied,
however in a different sense, namely for fixed marginals, and for quadratic costs,
both of which makes the analysis much easier.
We remark further that a joint limit of spatio-temporal refinement has been performed recently \cite{JunSol}
for a structurally different fully discrete approximation of the relativistic heat equation in one space dimension, 
using Lagrangian maps.

%
\section{Notations and general hypotheses}
\label{sct:hypo}
Below, we summarize several basic notations and hypotheses, 
most of which have been mentioned in the introduction in an informal way.

\subsection{Domains and measures}
In the proof of Theorem \ref{thm:total}, $\Omega=\setR^d$.
In the numerical experiments,
$\Omega\subset\setR^d$ is an open, bounded and connected set with Lipschitz boundary.
$\leb$ is the $d$-dimensional Lebesgue measure on $\Omega$.

For a measurable subset $M$ of an euclidean space $\setR^m$,
we denote by $\prb(M)$ the affine space of probability measures on $M$ that have finite second moment
(which is irrelevant if $M$ is bounded).
By abuse of notation, we shall frequently identify absolutely continuous $\mu=\rho\leb\in\prb(M)$
and their Lebesgue-densities $\rho\in L^1(M)$.

For a measurable map $T:M\to M'$,
the push-forward $T\#\mu\in\prb(M')$ of $\mu\in\prb(M)$
is defined via $T\#\mu[A]=\mu[T^{-1}(A)]$ for all measurable sets $A\subset M$.
Canonical projections $X,Y:M\times M\to M$ are given by $X(x,y)=x$ and $Y(x,y)=y$.
With these notations, 
the two marginals of $\gamma\in\prb(\Omega\times\Omega)$ 
are given by $X\#\gamma,Y\#\gamma\in\prb(\Omega)$, respectively.

The natural notion of convergence in $\prb(M)$ is narrow convergence,
that is weak convergence as measures in duality to bounded continuous functions $\varphi\in C_b(M)$.
For $M=\setR^m$, we shall occasionally use a slightly stronger kind of convergence,
namely convergence in $\wass_2$ (the Wasserstein distance is recalled below), 
which means narrow convergence plus convergence of the second moment.

\subsection{Wasserstein distance}
The $L^2$-Wasserstein distance between $\rho_0,\rho_1\in\prb(M)$
is given by
\begin{align*}
  \wass_2(\rho_0,\rho_1) = \left(\inf_{\gamma\in\prb(M\times M)}
  \left[\iint_{M\times M}|x-y|^2\dd\gamma(x,y)
  +\devil{X\#\gamma=\rho_0} + \devil{Y\#\gamma=\rho_1}\right]
  \right)^{1/2}.
\end{align*}
The infimum above is actually a minimum,
and minimizers $\gamma$ are called \emph{optimal plans} for the transport from $\rho_0$ to $\rho_1$.
We use the following fact:
if $\rho_0$ is absolutely continuous, then there exists a measurable $T:M\to M$,
called an \emph{optimal map},
such that $T\#\rho_0=\rho_1$, and
\begin{align*}
  \wass_2(\rho_0,\rho_1) = \left(\int_M|T(x)-x|^2\rho_0(x)\dd x\right)^{1/2}.
\end{align*}
$\wass_2$ is a genuine metric on $\prb(M)$.
Convergence in $\wass_2$ is equivalent to narrow convergence and convergence of the second moment.

\subsection{Energy functional}
By abuse of notation, 
the definition of $\nrg:\prb(\Omega)\to\setRinf$ in \eqref{eq:nrg} has to be understood in the sense that
if $\mu=\rho\leb$ is absolutely continuous, then $\nrg(\mu)=\nrg(\rho)$ is given by the integral,
and $\nrg(\mu)=+\infty$ otherwise.
Since $h$ is convex, l.s.c. and super-linear at infinity, $\nrg$ is  lower semi-continuous with respect to narrow convergence.

The methods we present are suited to study general energy functionals of the form \eqref{eq:nrg}
with a smooth and convex function $h$ of superlinerar growth at infinity.
In the proof of Theorem \ref{thm:total}, we restrict ourselves to $h(r)=r^m$ with $m>1$ to facilitate readability.
In the numerical experiments, we additionally use $h(r)=r(\log r-1)$.

\subsection{Derived cost function}
\label{sct:cost}
We assume that $\ccc:\setR^d\to[0,\infty]$ is strictly convex, continuous and bounded on $\ball$,
and $+\infty$ outside of $\ball$, with unique minimum $\ccc(0)=0$.
For technical reasons, we further assume that $\ccc\equiv1$ on $\partial\ball$.
Then the gradient of the Legendre dual $\ccc^\star$ lies in $\ball$.

The cost function $\cc:\Omega\times\Omega\to[0,\infty]$ is derived from $\ccc$ via
\begin{align}\label{eq:cost_obstcl}
  \cc_\tau(x,y) = \inf\left\{ \frac1\tau\int_0^\tau\ccc(\dot z(t))\dd t\,\middle|\,
  \text{$z:[0,\tau]\to\Omega$ differentiable},\, z(0)=x,\,z(\tau)=y  \right\}.
\end{align}
If $\Omega$ is convex (e.g., $\Omega=\setR^d$), then thanks to the convexity of $\ccc$,
\begin{align}
  \label{eq:ccccc}
  \cc_\tau(x,y) = \ccc\left(\frac{y-x}\tau\right).
\end{align}

\subsection{Spatial discretization}
\label{sct:spcdisc}
We assume that for each $\delta>0$, a tesselation $\cubes{\delta}$ of $\Omega$ is given.
That is, $\cubes{\delta}$ consists of finitely (if $\Omega$ bounded) or infinite-countably (if $\Omega=\setR^d$) 
many open non-overlapping cells $Q$ such that the union of their closures $\overline{Q}$ cover $\Omega$.
We further require that there is a constant $\underline r>0$ such that
\begin{align}
  \label{eq:diam}
  \diam(Q) \le \sqrt{d}\delta 
  \quad\text{and}\quad 
  |Q|:=\leb(Q) \ge (\underline r\delta)^d
  \quad\text{for all $Q\in\cubes{\delta}$}.
\end{align}
A canonical example for $\Omega=\setR^d$ is --- setting $\underline r:=1$ ---
\begin{align*}
\cubes{\delta} = \left\{\delta(\{\textbf{j}\} + K) \mid \textbf{j}\in \mathbb Z^d \right\} \quad \text{ where }\quad K:=(-\tfrac 1 2 , \tfrac 1 2 )^d\;.
\end{align*}
Accordingly, we define $\prb_{\delta}(\Omega)$ as the space of those $\rho\leb\in\prb(\Omega)$
for which $\rho$ is constant on each $Q_i\in\cubes{\delta}$.
Further, $\prb_\delta(\Omega\times\Omega)$ consists of those $\gamma\in\prb(\Omega\times\Omega)$
for which $Y\#\prb\in\prb_\delta(\Omega)$.
We emphasize that the condition is only on the $y$-marginal $Y\#\gamma$, 
not on the $x$-marginal $X\#\gamma$, which does not even need to be absolutely continuous.
For convenience, we set $\prb_0(\Omega):=\prb(\Omega)$.

For a probability density $\bar\rho\in L^1(\Omega)$, 
let
\begin{align*}
  \gspc_\delta(\bar\rho) = \left\{\gamma\in\prb_\delta(\Omega\times\Omega)\,;\,X\#\gamma=\bar\rho\leb\right\}
\end{align*}
be the subset of measures with $\bar\rho\leb$ as first marginal.

Moreover, we assume that for each $\delta>0$, a function $\cc_{\tau,\delta}:\Omega\times\Omega\to[0,\infty]$ is given
that approximates the cost function $\cc_\tau$ as follows:
there are $\alpha_{\tau,\delta}\in(0,1)$ with $\alpha_{\tau,\delta}\to0$ as $\delta\to0$ for fixed $\tau>0$,
such that
\begin{align}
  \label{eq:ccuniform}
  &\left|\cc_{\tau,\delta}(x,y)-\cc_\tau(x,y)\right|\le\alpha_{\tau,\delta}
  \quad \text{for $|x-y|\le\tau$},
  \quad\text{and} \\
  \label{eq:ccbelow}
  &\cc_{\tau,\delta}(x,y)\ge
  1-\alpha_{\tau,\delta}+\frac1{\alpha_{\tau,\delta}}\big(|y-x|-\tau\big)^2 \quad \text{for $|x-y|>\tau$}.
\end{align}
Naturally, one can always take $\cc_{\tau,\delta}\equiv\cc_\tau$. 
Note that any $\cc_{\tau,\delta}$ with $\cc_{\tau,\delta}=+\infty$ on $|x-y|>\tau$ automatically satisfies \eqref{eq:ccbelow}.

For brevity, we write $\cc_k$ for $\cc_{\tau,\delta_k}$, 
and accordingly $\alpha_k$ for the constants $\alpha_{\tau,\delta_k}$ appearing in \eqref{eq:ccuniform}\&\eqref{eq:ccbelow}.

\section{Proof of Theorem \ref{thm:total}}
The proof of Theorem \ref{thm:total} immediately follows from a $\Gamma$-convergence result
that we formulate below.
\begin{prp}
  \label{thm:main}
  In addition to the hypotheses of Theorem \ref{thm:total},
  let a sequence $(\rho_k)_{k=1}^\infty$ of densities $\rho_k\in\spcs$ be given such that
  $\rho_k$ converges in $\wass_2$ to some $\rho_*\in\spcs$,
  and $\sup_k\nrg(\rho_k)<\infty$. Let furthermore $\delta_k>0$ be a sequence tending to zero slowly enough such that \begin{align}
  \label{eq:epsdelta}
  \eps_k\log(\delta_k^{-1}) \to 0
\end{align} 
holds. 
  Then the sequence of functionals $\anrg_k:\prb(\Omega\times\Omega)\to[0,+\infty]$ with, 
  c.f.\ \eqref{eq:jko2},
  \begin{align*}
    \anrg_k(\gamma) := \anrg_{\eps_k,\delta_k,\cc_k}(\gamma|\rho_k)
  \end{align*}
  $\Gamma$-converges in the narrow topology to $\anrg_*: \prb(\Omega\times\Omega)\to[0,+\infty]$ with
  \begin{align*}
    \anrg_*(\gamma) 
    = \iint_{\Omega\times\Omega}\cc_{\tau,\delta}(x,y)\dd\gamma(x,y) 
    + \frac 1 \tau \nrg(Y\#\gamma)
    + \devil{X\#\gamma=\rho_*}.
  \end{align*}
  Moreover, each $\anrg_k$ possesses a (unique, if $\eps_k>0$) minimizer $\hat\gamma_k\in\gspc_{\delta_k}(\rho_k)$, 
  and a subsequence of these minimizers converges in $\wass_2$ to a minimizer $\hat\gamma\in\gspc(\rho)$ of $\anrg(\cdot;\rho_*)$.
\end{prp}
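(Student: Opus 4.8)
The plan is to establish the three standard ingredients of $\Gamma$-convergence --- a liminf inequality, a recovery (limsup) sequence, and equi-coercivity --- and then deduce the convergence of minimizers by the fundamental theorem of $\Gamma$-convergence. For equi-coercivity, I would first show that the sublevel sets $\{\anrg_k\le C\}$ are narrowly (indeed $\wass_2$-) precompact uniformly in $k$. The constraint $X\#\gamma=\rho_k$ with $\rho_k\to\rho_*$ in $\wass_2$ pins down the first marginal and gives tightness in the $x$-variable; the bound on $\iint\cc_k\dd\gamma+\eps_k\ent(\gamma)$ together with the lower bound \eqref{eq:ccbelow} forces $\iint(|y-x|-\tau)_+^2\dd\gamma$ to be controlled, so the second moment of $Y\#\gamma$ is bounded and tightness in $y$ follows. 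The entropy term $\eps_k\ent(\gamma)$ can be negative, but only mildly: using the Gaussian reference measure and the second-moment bound one controls $\ent(\gamma)\ge -C(1+\text{second moment})$, which is absorbed since $\eps_k\to0$. This simultaneously shows that each $\anrg_k$ is bounded below and narrowly lower semicontinuous (the cost term by Fatou since $\cc_k$ is l.s.c. and nonnegative, the entropy term by joint convexity/l.s.c. of $\ent$, the energy term $\frac1\tau\nrg(Y\#\gamma)$ by the assumed l.s.c. of $\nrg$, the marginal constraint because $\gamma\mapsto X\#\gamma$ is narrowly continuous), hence each $\anrg_k$ attains its minimum on the (nonempty, e.g. containing the product $\rho_k\otimes(\text{a fixed }\prb_{\delta_k}\text{-density})$) admissible set; strict convexity for $\eps_k>0$ gives uniqueness.

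For the liminf inequality, given $\gamma_k\rightharpoonup\gamma$ narrowly with $\liminf_k\anrg_k(\gamma_k)<\infty$, I would pass to a subsequence realizing the liminf, note the marginal constraint forces $X\#\gamma_k=\rho_k\to\rho_*$, hence $X\#\gamma=\rho_*$; the second-moment control from \eqref{eq:ccbelow} above upgrades narrow convergence to $\wass_2$-convergence (this is needed to match the $\devil{X\#\gamma=\rho_*}$ term exactly and to get lower semicontinuity with the right constant). Then $\liminf_k\iint\cc_k\dd\gamma_k\ge\iint\cc_\tau\dd\gamma$ by combining the uniform closeness \eqref{eq:ccuniform} on $|x-y|\le\tau$ with the coercive lower bound \eqref{eq:ccbelow} on $|x-y|>\tau$ and lower semicontinuity of $\gamma\mapsto\iint\cc_\tau\dd\gamma$; $\liminf_k\eps_k\ent(\gamma_k)\ge0$ because $\eps_k\ent(\gamma_k)\ge-\eps_k C(1+M_2)\to0$; and $\liminf_k\frac1\tau\nrg(Y\#\gamma_k)\ge\frac1\tau\nrg(Y\#\gamma)$ by l.s.c. of $\nrg$ under narrow convergence of the second marginal. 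Adding these gives $\liminf_k\anrg_k(\gamma_k)\ge\anrg_*(\gamma)$.

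The main obstacle --- and, as the introduction warns, the genuinely delicate part --- is the construction of the recovery sequence. Given $\gamma$ with $\anrg_*(\gamma)<\infty$, so $\gamma$ is supported where $\cc_\tau<\infty$, i.e. $|y-x|\le\tau$, and $Y\#\gamma=\rho\leb$ with $\nrg(\rho)<\infty$ and $X\#\gamma=\rho_*$, I must produce $\gamma_k\in\gspc_{\delta_k}(\rho_k)$ with $\gamma_k\rightharpoonup\gamma$ and $\limsup_k\anrg_k(\gamma_k)\le\anrg_*(\gamma)$. The three competing requirements are: (i) the first marginal must be exactly $\rho_k$, not $\rho_*$; (ii) the second marginal must lie in $\prb_{\delta_k}(\Omega)$, i.e. be piecewise constant on $\cubes{\delta_k}$; (iii) the entropic term $\eps_k\ent(\gamma_k)$ must stay bounded, which --- since $\eps_k\log(\delta_k^{-1})\to0$ by \eqref{eq:epsdelta} --- means the density of $\gamma_k$ may blow up at most like a negative power of $\delta_k$. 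My strategy would be: first mollify/spread $\gamma$ at a scale $\sim\delta_k$ (convolution with a smooth kernel, then conditioning/block-averaging against the cells of $\cubes{\delta_k}$ in the $y$-variable to enforce (ii)), which makes the density bounded by $C\delta_k^{-d}$ so $\ent(\gamma_k)\le C d\log(\delta_k^{-1})$ and $\eps_k\ent(\gamma_k)\to0$; then correct the $x$-marginal from $\rho_*$ to $\rho_k$ by gluing with the ($\wass_2$-small, since $\rho_k\to\rho_*$) optimal plan between them, using the composition/gluing lemma. The subtlety is that every such perturbation moves mass, and since $\cc_\tau=+\infty$ as soon as $|y-x|>\tau$, even an $O(\delta_k)$ displacement can push transported mass outside the admissible set and make $\iint\cc_k\dd\gamma_k=+\infty$; this is exactly where the relaxed lower bound \eqref{eq:ccbelow}, which only penalizes the overshoot quadratically rather than forbidding it, is used --- one arranges the perturbation so the overshoot is $O(\delta_k)$ on a set of mass $O(1)$, contributing $O(\delta_k^2/\alpha_k)+O(\alpha_k)\to0$ to the cost. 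I would carry this out carefully, checking narrow convergence $\gamma_k\rightharpoonup\gamma$, continuity of the cost contribution via \eqref{eq:ccuniform}, convergence of $\nrg(Y\#\gamma_k)\to\nrg(\rho)$ (block-averaging does not increase $\nrg$ by Jensen, and a matching liminf gives equality), and the vanishing of the entropic term. Once the three $\Gamma$-convergence ingredients plus equi-coercivity are in place, the standard argument yields a $\wass_2$-convergent subsequence of minimizers $\hat\gamma_k$ whose limit minimizes $\anrg_*$, completing the proof.
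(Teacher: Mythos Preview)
Your liminf argument, equi-coercivity, and existence/convergence of minimizers are essentially what the paper does (Lemmas~\ref{lem:rlogr}--\ref{lem:anrgbelow}, Proposition~\ref{prp:liminf}, and the final two lemmas), and your entropy bookkeeping $\ent(\gamma_k)\lesssim d\log(\delta_k^{-1})$ combined with \eqref{eq:epsdelta} is exactly Lemma~\ref{lem:entconv}.

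The recovery sequence, however, has a genuine gap. You propose to let the perturbed plan overshoot the cylinder $|x-y|\le\tau$ by $O(\delta_k)$ and then invoke \eqref{eq:ccbelow} to bound the resulting cost by $O(\delta_k^2/\alpha_k)$. But \eqref{eq:ccbelow} is a \emph{lower} bound on $\cc_k$, not an upper bound; the hypotheses place no ceiling on $\cc_k$ outside the cylinder, and the paper explicitly notes that $\cc_k\equiv+\infty$ on $|x-y|>\tau$ is admissible. Hence any mass your $\gamma_k$ puts outside $|x-y|\le\tau$ can make $\iint\cc_k\dd\gamma_k=+\infty$, and the limsup inequality fails. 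The only available upper estimate is \eqref{eq:ccuniform}, which lives strictly inside the cylinder.

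The paper's remedy is to guarantee $\supp\gamma_k\subset\{|x-y|\le\tau\}$ \emph{before} using \eqref{eq:ccuniform}. After pushing the $x$-marginal from $\rho_*$ to $\rho_k$ via the optimal map $T_k$ (your gluing idea, Step~1), the support may protrude by roughly $\sqrt{\omega_k}$ with $\omega_k=\wass_2(\rho_*,\rho_k)$; the block-averaging in $y$ (your Step~(ii), their Step~4) adds another $\delta_k$. To absorb both, the paper decomposes the plan into $2^d$ orthant pieces $\gamma^{(2,\beta)}$ via a smooth partition of unity in the variable $x-y$ (Step~2), translates each piece \emph{inward} along its orthant direction by $\sigma_k\asymp\delta_k+\sqrt{\omega_k}$ so that it sits in $|x-y|\le\tau-\delta_k$ (Step~3a, with the geometric verification in Lemma~\ref{lem:suppmeas}), and redistributes the small-mass remainder $\gamma^{(2,0)}$ near the diagonal (Step~3b, Lemma~\ref{lem:outsidesmall}). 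Only then is the $y$-projection to $\prb_{\delta_k}$ applied, leaving the support inside $|x-y|\le\tau$. This orthant-splitting plus inward translation is the ``missing idea'' your plan lacks; without it the singular cost makes the naive mollify-then-glue construction inadmissible. The subsequent convergence $\nrg(Y\#\gamma_k)\to\nrg(Y\#\gamma_*)$ also needs more than Jensen: the translations in Step~3 force a genuine $L^m$-convergence argument for the second marginals (Lemmas~\ref{lem:etastrong}--\ref{lem:etaconv}).
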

\begin{rmk} 
  Note that \eqref{eq:epsdelta},
  which is needed for the construction of the recovery sequence in Section \ref{ssec:LimsupCond},
  imposes no additional restriction if the tesselation $\cubes{\delta}$ is made of identical cubes,
  since then $\gspc_{\delta'}(\tilde\rho)\subseteq\gspc_\delta(\tilde\rho)$ if $\delta'>0$ is an integer multiple of $\delta>0$,
  or is arbitrary if $\delta=0$,
  --- recall that the additional condition induced by $\delta$ is only on the $Y$-marginal, not on the $X$-marginal ---
  and we can replace $(\delta_k)$ by a sequence $(\delta_k')$ with $\delta_k'\ge\delta_k$
  that still goes to zero and satisfies \eqref{eq:epsdelta},
  and the recovery sequence $\gamma_k\in\gspc_{\delta_k'}(\rho_k)$ that we obtain 
  is clearly also a recovery sequence with $\gamma_k\in\gspc_{\delta_k}(\rho_k)$.
\end{rmk}
It is now easy to conclude Theorem \ref{thm:total} by induction on $n$.
Trivially, $\rho_k^0=\rho^0$ converges to $\rho_*^0=\rho^0$.
Assume that for some $n=1,2,\ldots$, there is a (non-relabeled) subsequence $(\rho_k^{n-1})_{k=1}^\infty$
that converges in $\wass_2$ and weakly in $L^m(\setR^d)$ to a limit $\rho_*^{n-1}$.
That sequence $(\rho_k^{n-1})_{k=1}^\infty$ satisfies the hypotheses of Proposition \ref{thm:main},
since weak convergence in $L^m(\setR^d)$ implies that $\nrg(\rho_k^{n-1})=\|\rho_k^{n-1}\|_{L^m}^m$ remains bounded.
Hence the respective functionals $\anrg_k$ with $\rho_k:=\rho_k^{n-1}$ $\Gamma$-converge
narrowly to $\anrg_*$, with $\rho^{n-1}_*$ in place of $\rho_*$,
and a (non-relabeled) subsubsequence $(\gamma^n_k)_{k=1}^\infty$ of the minimizers 
converges to a limit $\gamma_*^n$ in $\wass_2$.
It is obvious that $\rho^n_*:=Y\#\gamma_*^n$ is a minimizer in \eqref{eq:minprob1}.
It is further obvious that for the subsubsequence under consideration, 
the convergence of $\gamma^{n}_k$ in $\wass_2$ is inherited by the marginal $\rho^n_{k-1}$.
Finally, to conclude the weak convergence in $L^m(\setR^d)$, 
possibly after passing to yet another subsequence,
observe that the $\gamma^{n-1}_k$ are minimizers of the respective $\anrg_k$,
that $\|\rho_k\|_{L^m}^m=\nrg(\rho_k)\le\anrg_k(\gamma_k^{n-1})$ by definition of $\anrg_{\eps,\delta,\cc}$,
and that $\anrg_k$ $\Gamma$-converges to $\anrg_*$.
Alaoglou's theorem allows us to select a subsequence that converges weakly in $L^m(\setR^d)$.

Note that above, we have used that some subsequence of the $\gamma^{n}_k$
converges to a minimizer $\gamma_*^n$ of $\anrg_*$.
However, since the respective marginal $\rho^n_*=Y\#\gamma_*^n$ is a global minimizer in \eqref{eq:minprob1},
it is uniquely determined by $\rho_*$, thanks to the strict convexity of $\nrg$.
Therefore, no matter which convergent subsequence of $(\gamma^{n}_k)_{k=1}^\infty$ is chosen,
the respective $\rho^n_k=Y\#\gamma_k^n$ all converge to the same limit,
implying convergence of the entire sequence $(\rho_k^n)_{k=1}^\infty$.

The rest of the analytical part of this paper is devoted to proving Proposition \ref{thm:main}.

\section{Proof of Proposition \ref{thm:main}}
%
Throughout the proof, let a sequence $(\rho_k)_{k=1}^\infty$ be fixed that satisfies the hypotheses of Proposition \ref{thm:main},
i.e., $\rho_k\in\spcs$, $\sup_k\nrg(\rho_k)<\infty$, and $\rho_k\to\rho_*$ in $\wass_2$.

The proof is divided into three steps.
First, we prove the \emph{liminf-condition} for $\Gamma$-convergence: 
if $\gamma_k\in\gspc_{\delta_k}(\rho_k)$ converges to $\gamma_*\in\gspc(\rho_*)$ narrowly, 
then
\begin{align}
  \label{eq:liminf}
  \anrg_*(\gamma_*)\le\liminf_{k\to\infty}\anrg_k(\gamma_k).
\end{align}
Second, and by far more difficult, is the construction of a \emph{recovery sequence}:
if $\gamma_*\in\gspc(\rho_*)$ is such that $\anrg_*(\gamma_*)<\infty$,
then there are $\gamma_k\in\gspc_{\delta_k}(\rho_k)$
such that $\gamma_k\to\gamma_*$ narrowly, 
and 
\begin{align}
  \label{eq:limsup}
  \anrg_*(\gamma_*)\ge\limsup_{k\to\infty}\anrg_k(\gamma_k).
\end{align}
These two steps together verify the $\Gamma$-convergence of the $\anrg_k$.
In particular, it follows that if $\hat\gamma_k$ are minimizers of the $\anrg_k$
which converge to $\hat\gamma\in\gspc(\rho_*)$, 
then $\hat\gamma$ is a minimizer of $\anrg_*$.
Now, in the final step, we verify that each $\anrg_k$ actually possesses a minimizer $\hat\gamma_k\in\gspc_{\delta_k}(\rho_k)$,
and that a subsequence of those converges narrowly to a limit $\hat\gamma\in\gspc(\rho_*)$,
which then is necessarily a minimizer of $\anrg(\cdot|\rho_*)$.

\subsection{Preliminary results}
Before starting with the core of the proof, we draw two immediate conclusions
from the hypotheses stated above.
\begin{lem}
  \label{lem:rlogr}
  The $\rho_k$ have $k$-uniformly bounded second moments,
  and $\sint \rho_k(x)\log\rho_k(x)\dd x$ is $k$-uniformly bounded from above and below.
\end{lem}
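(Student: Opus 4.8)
The plan is to prove Lemma \ref{lem:rlogr} by leveraging the two quantities that are controlled by hypothesis --- the convergence of $\rho_k$ in $\wass_2$ and the boundedness of $\nrg(\rho_k)=\|\rho_k\|_{L^m}^m$ --- to pin down all the relevant integrals. For the second moments, this is immediate: convergence in $\wass_2$ is by definition narrow convergence together with convergence of the second moment, so $\sint|x|^2\rho_k(x)\dd x$ converges to $\sint|x|^2\rho_*(x)\dd x<\infty$ and is therefore bounded uniformly in $k$. I will record this first, as it is also needed for the estimates on $\int\rho_k\log\rho_k$.

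For the entropy integral, I would split $r\log r$ into its positive and negative parts and estimate them separately. The upper bound on $\int\rho_k\log\rho_k$ comes from the region where $\rho_k\ge1$: there $0\le\rho_k\log\rho_k\le C_m\rho_k^m$ for every $m>1$ (since $\log r = o(r^{m-1})$ as $r\to\infty$, so $r\log r \le C_m r^m$ on $[1,\infty)$), whence $\int_{\{\rho_k\ge1\}}\rho_k\log\rho_k\dd x\le C_m\|\rho_k\|_{L^m}^m = C_m\nrg(\rho_k)$, which is bounded by hypothesis; on $\{\rho_k<1\}$ the integrand is negative, so it only helps. For the lower bound one controls the negative contribution from the region $\{\rho_k<1\}$ using the second moment: by the standard argument (see e.g.\ \cite{AGS}), for any $\lambda>0$,
\begin{align*}
  \int_{\{\rho_k<1\}}\rho_k|\log\rho_k|\dd x
  \le \int_{\setR^d} \rho_k e^{-\lambda|x|^2}\,\big|\log(\rho_k e^{-\lambda|x|^2})\big|\dd x + \lambda\sint|x|^2\rho_k\dd x,
\end{align*}
and then, comparing $\rho_k e^{-\lambda|x|^2}$ against the Gaussian probability density $g_\lambda = (\pi/\lambda)^{-d/2}e^{-\lambda|x|^2}$ and using that $s\mapsto s|\log s|$ is bounded on $[0,1]$ together with $s|\log s|\le s\log s + 2/e$, one bounds the first term by a constant depending only on $\lambda$ and $d$. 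Choosing $\lambda=1$, say, and invoking the uniform second-moment bound from the first step, this shows $\int_{\{\rho_k<1\}}\rho_k|\log\rho_k|\dd x$ is bounded uniformly in $k$; combined with the (nonnegative) contribution from $\{\rho_k\ge1\}$, this gives the uniform lower bound on $\sint\rho_k\log\rho_k\dd x$.

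I expect no serious obstacle here --- this is a routine compactness-type estimate, and the only mild care needed is the interplay between the negative part of $\rho\log\rho$ at infinity (where $\rho$ is small) and the second moment, which is exactly what the Gaussian comparison handles. The one point worth stating cleanly is that the constant $C_m$ in $r\log r\le C_m r^m$ on $[1,\infty)$ depends only on $m$, so the upper bound genuinely follows from $\sup_k\nrg(\rho_k)<\infty$. Everything else is a matter of assembling the two ingredients, and the proof is short.
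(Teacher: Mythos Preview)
Your overall plan is correct and matches the paper's: the second-moment bound follows directly from $\wass_2$-convergence, the upper bound on $\sint\rho_k\log\rho_k\dd x$ from $r\log r\le C_m r^m$ together with $\sup_k\nrg(\rho_k)<\infty$, and the lower bound from the second-moment control via a classical estimate. For the lower bound the paper takes a slightly different route than you sketch: it uses the pointwise inequality $r\log r\ge -\frac{d+1}{e}r^{d/(d+1)}$ and then H\"older against the integrable weight $(1+|x|^2)^{-d}$, rather than a Gaussian comparison. Both are standard and give the same conclusion.

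There is, however, a genuine slip in your execution of the lower bound. The displayed inequality
\[
\int_{\{\rho_k<1\}}\rho_k|\log\rho_k|\dd x
\le \sint \rho_k e^{-\lambda|x|^2}\big|\log(\rho_k e^{-\lambda|x|^2})\big|\dd x + \lambda\sint|x|^2\rho_k\dd x
\]
does not hold pointwise for the integrands: at $\rho_k=e^{-M}$, $|x|^2=1$, $\lambda=1$, the left integrand is $Me^{-M}$ while the right integrand is $e^{-M}\big[e^{-1}(M+1)+1\big]$, which is strictly smaller for $M$ moderately large. Worse, your subsequent step of bounding the first integral on the right by a constant via ``$s|\log s|\le s\log s+2/e$'' would produce the divergent contribution $\frac{2}{e}\sint\dn x$. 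The Gaussian comparison you have in mind should instead be run globally through relative entropy: with $g_\lambda=(\lambda/\pi)^{d/2}e^{-\lambda|x|^2}$,
\[
\sint\rho_k\log\rho_k\dd x
= \sint\rho_k\log\frac{\rho_k}{g_\lambda}\dd x + \sint\rho_k\log g_\lambda\dd x
\ge 0 + \frac{d}{2}\log\frac{\lambda}{\pi} - \lambda\sint|x|^2\rho_k\dd x,
\]
which gives the uniform lower bound in one line. With this correction your argument is complete and equivalent in strength to the paper's.
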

\begin{proof}
  By hypothesis, $\rho_k$ converges to $\rho_*$ in $\wass_2$,
  which implies in particular the convergence of $\rho_k$'s second moment to the one of $\rho_*$.
  Boundedness of the integral is obtained by means of a classical estimate:
  first, observe that $r\log r\ge -\frac{d+1}{e}r^{\frac d{d+1}}$ for all $r>0$.
  By H\"older's inequality, it follows that
  \begin{align*}
    \sint\rho(x)\log\rho(x)\dd x 
    &\ge -\frac{d+1}{e}\sint\rho(x)^{\frac d{d+1}}\dd x \\
    &\ge -\frac{d+1}{e}\left(\sint\frac{\dn x}{\big(1+|x|^2\big)^d}\right)^{\frac 1{d+1}}
    \left(\sint\rho(x)\big(1+|x|^2\big)\dd x\right)^{\frac d{d+1}},
  \end{align*}
  which yields a finite lower bound that only depends on the second moment of $\rho_k$.
  An upper bound easily follows from the $k$-uniform boundedness of $\nrg(\rho_k)$ 
  and the fact that $r\log r\le\frac1{(m-1)e}r^m$ for all $r>0$. 
\end{proof}
For the next result, recall that $\alpha_k=\alpha_{\tau,\delta_k}$ are the quantities
that appear in conditions \eqref{eq:ccuniform}\&\eqref{eq:ccbelow}.
\begin{lem}
  \label{lem:anrgbelow}
  There is a constant $C$ such that --- uniformly for all $k$ large enough --- 
  the second moment of each $\gamma\in\gspc(\rho_k)$ is controlled via
  \begin{align}
    \label{eq:m2g}
    \dint\big(|x|^2+|y|^2\big)\dd\gamma(x,y) \le C\big(1+\alpha_k\anrg_k(\gamma)\big),
  \end{align}
  and $\anrg_k$ is bounded from below as follows,  
  \begin{align}
    \label{eq:anrgbelow}
    \anrg_k(\gamma) \ge (\tau-C\alpha_k\eps_k)\dint\cc_k\dd\gamma + \nrg(Y\#\gamma).
  \end{align}
  In particular, $\anrg_k$ is non-negative for all sufficiently large $k$ such that $C\alpha_k\eps_k\le\tau$.
\end{lem}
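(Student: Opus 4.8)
\emph{Proof plan.} Since \eqref{eq:m2g} and \eqref{eq:anrgbelow} are trivial when $\anrg_k(\gamma)=+\infty$, and equally trivial when $\eps_k=0$ (then only manifestly non-negative terms remain), I would fix $k$ and assume $\eps_k>0$ and $\anrg_k(\gamma)<\infty$; in particular $X\#\gamma=\rho_k\leb$, $\dint\cc_k\dd\gamma<\infty$ and $\ent(\gamma)<\infty$. The whole point is that $\eps_k\ent(\gamma)$ is the only term in $\anrg_k(\gamma)$ that can be negative, so the plan is to bound it from below by a small multiple of the transport cost $\dint\cc_k\dd\gamma$ plus a $k$-negligible constant, exploiting the $k$-uniform second-moment bound on $\rho_k=X\#\gamma$ furnished by Lemma \ref{lem:rlogr}.

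First I would extract from \eqref{eq:ccbelow} the pointwise inequality $|x-y|^2\le 2\alpha_k\cc_k(x,y)+2\tau^2$ on all of $\Omega\times\Omega$: on $\{|x-y|>\tau\}$ it follows from $\alpha_k\cc_k(x,y)\ge(|x-y|-\tau)^2$ together with $|x-y|\le(|x-y|-\tau)+\tau$, and on $\{|x-y|\le\tau\}$ it is obvious. Integrating against $\gamma$, and using $|y|^2\le2|x|^2+2|x-y|^2$ and $\dint|x|^2\dd\gamma=\sint|x|^2\rho_k(x)\dd x\le\sup_k\sint|x|^2\rho_k(x)\dd x$, this produces a bound $m_2(\gamma):=\dint\big(|x|^2+|y|^2\big)\dd\gamma\le C_0\big(1+\alpha_k\dint\cc_k\dd\gamma\big)$ with $C_0$ depending only on $\tau$ and the uniform second moment of the $\rho_k$. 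As a by-product, $m_2(\gamma)<\infty$ whenever $\anrg_k(\gamma)<\infty$, which legitimises the next step.

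Next I would bound the entropy from below. Running the H\"older argument from the proof of Lemma \ref{lem:rlogr}, but on $\setR^{2d}$ instead of $\setR^d$ (using $G\log G\ge-\tfrac{2d+1}{e}\,G^{2d/(2d+1)}$), gives $\ent(\gamma)\ge-C_1\big(1+m_2(\gamma)\big)^{2d/(2d+1)}\ge-C_1\big(1+m_2(\gamma)\big)$ with $C_1=C_1(d)$. Inserting the bound on $m_2(\gamma)$ from the previous step turns this into $\eps_k\ent(\gamma)\ge-C_2\eps_k-C_2\alpha_k\eps_k\dint\cc_k\dd\gamma$ with a $k$-uniform $C_2$.

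Finally I would substitute this into $\anrg_k(\gamma)=\dint\cc_k\dd\gamma+\eps_k\ent(\gamma)+\tfrac1\tau\nrg(Y\#\gamma)$. Since $\alpha_k\eps_k\to0$ (indeed $\alpha_k<1$ and $\eps_k\to0$), for all large $k$ the term $C_2\alpha_k\eps_k\dint\cc_k\dd\gamma$ is dominated by a fraction of the genuine cost term, and rearranging yields \eqref{eq:anrgbelow}; solving that inequality for $\dint\cc_k\dd\gamma$ and feeding the result back into the first step yields \eqref{eq:m2g}; and once $C\alpha_k\eps_k\le\tau$ the right-hand side of \eqref{eq:anrgbelow} is a sum of non-negative terms, so $\anrg_k\ge0$. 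I expect the only genuinely delicate point to be this last absorption: in contrast to the bounded-domain or $L^2$-Wasserstein settings, on $\Omega=\setR^d$ the entropy is not bounded below by a fixed constant and $\alpha_k$ need not be small, so everything hinges on the loss in $\ent(\gamma)$ being proportional to $\alpha_k\eps_k$ times $\dint\cc_k\dd\gamma$ (with only an $O(\eps_k)$ additive loss), which is exactly what the quadratic coercivity \eqref{eq:ccbelow} of $\cc_k$ outside the light cone $\{|x-y|\le\tau\}$ provides.
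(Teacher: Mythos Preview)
Your proposal is correct and follows essentially the same route as the paper: bound the second moment of $\gamma$ in terms of $\alpha_k\dint\cc_k\dd\gamma$ using the quadratic coercivity \eqref{eq:ccbelow} outside the light cone, combine this with the H\"older-type lower bound on $\ent(\gamma)$ in terms of the second moment (the $2d$-dimensional version of the estimate in Lemma~\ref{lem:rlogr}), and then absorb the resulting $C\alpha_k\eps_k\dint\cc_k\dd\gamma$ into the genuine cost term. The only cosmetic difference is that you package the coercivity step as a single pointwise inequality $|x-y|^2\le 2\alpha_k\cc_k(x,y)+2\tau^2$, whereas the paper writes $|y|\le |x|+(|y-x|-\tau)_++\tau$ and squares; the content is identical.
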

\begin{proof}
  On the one hand, with the same idea as in the proof of Lemma \ref{lem:rlogr} above,
  we find for every $\gamma=G\leb\otimes\leb$ that
  \begin{align*}
    \ent(\gamma) 
    &\ge -C_d\left(1+\dint\big(|x|^2+|y|^2\big)\dd\gamma(x,y)\right),
  \end{align*}
  where 
  \begin{align*}
    C_d:=\frac{2d+1}e\left(\dint \frac{\dn(x,y)}{\big(1+|x|^2+|y|^2\big)^{2d}}\right)^{\frac{1}{2d+1}}
  \end{align*}
  is a finite constant that only depends on $d$.
  On the other hand, using hypothesis \eqref{eq:ccbelow} on $\cc_k$, 
  it follows that
  \begin{align*}
    &\dint|y|^2\dd\gamma(x,y)
    \le\dint\big[|x|+\big(|y-x|-\tau\big)\mathbf 1_{|y-x|\ge\tau}+\tau\big]^2\dd\gamma(x,y) \\
    & \le2\dint|x|^2\dd\gamma(x,y)+4\tau^2\dint\dd\gamma(x,y) + 4\iint_{|y-x|\ge\tau}\big(|x-y|-\tau\big)^2\dd\gamma(x,y) \\
    & \le2\sint|x|^2\rho_k(x)\dd x + 4\tau^2 + 4\alpha_k\dint\cc_k\dd\gamma,
  \end{align*}
  which yields
  \begin{align}
    \label{eq:belowhelp}
    \dint\big(|x|^2+|y|^2\big)\dd\gamma(x,y)
    \le 4 \left[\tau^2+\sint|x|^2\rho_k(x)\dd x+\alpha_k\dint\cc_k\dd\gamma\right].
  \end{align}
  In view of Lemma \ref{lem:rlogr} above, 
  the second moment of $\rho_k$ is uniformly controlled, 
  and therefore
  \begin{align}
    \label{eq:glogg}
    \ent(\gamma) \ge -C\left(1+\alpha_k\dint\cc_k\dd\gamma\right),
  \end{align}
  with a $k$-independent $C$.
  This induces the bound \eqref{eq:anrgbelow}.
  The other bound \eqref{eq:m2g} follows for all $k$ such that, say, $C\alpha_k\eps_k\le\tau/2$, 
  by re-inserting \eqref{eq:anrgbelow} into \eqref{eq:belowhelp}
  and using once again the uniform bound on $\rho_k$'s second moment.
\end{proof}

\subsection{Liminf condition}
\begin{prp}
  \label{prp:liminf}
  Assume that a sequence of measures $\gamma_k\in\gspc(\rho_k)$ 
  converges narrowly to $\gamma_*\in\gspc(\rho)$
  Then \eqref{eq:liminf} holds.
\end{prp}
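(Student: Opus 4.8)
The plan is to split $\anrg_k(\gamma_k)$ into its three variable contributions --- the transport cost $\dint\cc_k\dd\gamma_k$, the entropy penalty $\eps_k\ent(\gamma_k)$, and the energy $\tfrac1\tau\nrg(Y\#\gamma_k)$ --- and pass each one separately to the limit, the two indicator terms being forced to vanish along any subsequence that keeps $\anrg_k(\gamma_k)$ bounded. First I would reduce to that case: put $M:=\liminf_k\anrg_k(\gamma_k)$, observe that \eqref{eq:liminf} is trivially true when $M=+\infty$, and otherwise pass to a subsequence (not relabeled) along which $\anrg_k(\gamma_k)\to M<\infty$. For $k$ large, finiteness of $\anrg_k(\gamma_k)$ together with the standing assumption $\gamma_k\in\gspc(\rho_k)$ makes both indicators $\devil{X\#\gamma_k=\rho_k}$ and $\devil{Y\#\gamma_k\in\prb_{\delta_k}(\Omega)}$ vanish; Lemma \ref{lem:anrgbelow} together with $\nrg\ge0$ yields a $k$-uniform bound on $\dint\cc_k\dd\gamma_k$, and then \eqref{eq:glogg} gives $\eps_k\ent(\gamma_k)\ge-C\eps_k$, so that $\liminf_k\eps_k\ent(\gamma_k)\ge0$.

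The transport cost is the only delicate part, and I expect it to be the main obstacle, because the $\cc_k$ approach $\cc_\tau$ only inside the tube $\{|x-y|\le\tau\}$ while $\cc_\tau\equiv+\infty$ outside it, so that one cannot directly invoke either uniform convergence of the integrands or the portmanteau inequality for a fixed l.s.c.\ integrand. I would handle it in two moves. First, use the quadratic tail \eqref{eq:ccbelow}: discarding the nonnegative term $1-\alpha_k$ and combining with the uniform bound on $\dint\cc_k\dd\gamma_k$ gives
\begin{align*}
  \iint_{|x-y|>\tau}\big(|y-x|-\tau\big)^2\dd\gamma_k(x,y)\le\alpha_k\Big(\dint\cc_k\dd\gamma_k+1\Big),
\end{align*}
whose right-hand side vanishes as $k\to\infty$ since $\alpha_k\to0$; as $(x,y)\mapsto(|y-x|-\tau)_+^2$ is continuous and nonnegative, narrow convergence then forces $\gamma_*$ to be concentrated on $\{|x-y|\le\tau\}$. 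Second, introduce the minorant $g_0:\setR^d\times\setR^d\to[0,1]$ defined by $g_0:=\cc_\tau$ on $\{|x-y|\le\tau\}$ and $g_0:=1$ on $\{|x-y|>\tau\}$. Because $\ccc$ is continuous on $\ball$ with $\ccc\equiv1$ on $\partial\oball$, the function $g_0$ is in fact bounded and continuous on all of $\setR^d\times\setR^d$, and the hypotheses \eqref{eq:ccuniform}--\eqref{eq:ccbelow} give the pointwise lower bound $\cc_k\ge g_0-\alpha_k$. Hence
\begin{align*}
  \liminf_{k\to\infty}\dint\cc_k\dd\gamma_k\ge\lim_{k\to\infty}\Big(\dint g_0\dd\gamma_k-\alpha_k\Big)=\dint g_0\dd\gamma_*=\dint\cc_\tau\dd\gamma_*,
\end{align*}
the last equality holding because $\gamma_*$ lives precisely where $g_0$ and $\cc_\tau$ coincide.

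The energy term is then the soft one: $Y$ is continuous, hence $Y\#\gamma_k\to Y\#\gamma_*$ narrowly, and lower semicontinuity of $\nrg$ with respect to narrow convergence yields $\liminf_k\nrg(Y\#\gamma_k)\ge\nrg(Y\#\gamma_*)$. Finally I would assemble the three pieces: each of the three sequences is bounded from below, so the $\liminf$ of their sum dominates the sum of the three $\liminf$s, and together with $\devil{X\#\gamma_*=\rho_*}=0$ (since $\gamma_*\in\gspc(\rho_*)$) this gives
\begin{align*}
  M=\lim_{k\to\infty}\anrg_k(\gamma_k)\ge\dint\cc_\tau\dd\gamma_*+0+\frac1\tau\nrg(Y\#\gamma_*)=\anrg_*(\gamma_*),
\end{align*}
which is \eqref{eq:liminf}. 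The entire difficulty is thus concentrated in the cost term: replacing the singular $\cc_\tau$ by the bounded continuous $g_0$ away from the tube, and using the quadratic tail in \eqref{eq:ccbelow} to pin the limit plan $\gamma_*$ inside the tube, where $g_0$ and $\cc_\tau$ become indistinguishable for $\gamma_*$.
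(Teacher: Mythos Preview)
Your argument is correct and mirrors the paper's proof almost step by step: the paper also reduces to a subsequence with $\anrg_k(\gamma_k)$ bounded, uses \eqref{eq:ccbelow} to confine $\gamma_*$ to the tube $\{|x-y|\le\tau\}$, introduces exactly your continuous bounded minorant $g_0$ (called $\hat\cc$ there) with $\cc_k\ge\hat\cc-\alpha_k$, and finishes with the narrow lower semicontinuity of $\nrg\circ(Y\#)$. The only organizational difference is that the paper absorbs the entropy term into the cost via \eqref{eq:anrgbelow} in one stroke, whereas you keep the three contributions separate and control $\eps_k\ent(\gamma_k)$ directly from \eqref{eq:glogg}; both routes are equivalent.
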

%
%
\begin{proof}
  Recall from Lemma \ref{lem:anrgbelow} that $\anrg_k$ is non-negative for $k$ large enough.
  And if $\anrg_k(\gamma_k)\to+\infty$, there is nothing to prove.
  Hence, it suffices to consider a sequence $(\gamma_k)$ such that $\anrg_k(\gamma_k)$ converges to a finite value.
  From \eqref{eq:anrgbelow}, one directly concludes $k$-uniform boundedness of $\iint\cc_k\dd\gamma_k$.
  Thanks to the bound \eqref{eq:ccbelow} on $\cc_k$, it follows for every $t>0$ 
  that $\gamma_k$'s mass in $|x-y|\ge\tau+t$ goes to zero as $k\to\infty$.
  Thus, $\gamma_*$ is supported in $|x-y|\le\tau$.

  Define the continuous function $\tilde\cc:\setR^d\times\setR^d\to\setR$ 
  by $\hat\cc(x,y)=\cc(x,y)$ for $|x-y|\le\tau$, 
  and $\hat\cc\equiv1$ otherwise.
  From \eqref{eq:ccuniform} and \eqref{eq:ccbelow} it is clear that $\cc_k\ge\hat\cc-\alpha_k$, and so
  \begin{align*}
    \dint\cc_k\dd\gamma_k
    \ge\dint\big(\hat\cc-\alpha_k\big)\dd\gamma_k
    =\dint\hat\cc\dd\gamma_k - \alpha_k
    \stackrel{k\to\infty}{\longrightarrow}\dint\hat\cc\dd\gamma_*
    =\dint\cc\dd\gamma_*.
  \end{align*}
  So, by \eqref{eq:anrgbelow},
  \begin{align}
    \label{eq:liminfhelp}
    \liminf_{k\to\infty}\anrg_k(\gamma_k)\ge\tau\dint\cc\dd\gamma_* + \liminf_{k\to\infty}\nrg(Y\#\gamma_k).
  \end{align}
  Finally, since the projection $Y$ is a continuous map, 
  the push-forwarded measure $Y\#\gamma_k$ converges narrowly to $Y\#\gamma_*$,
  and since $r\mapsto r^m$ is a convex function,
  it follows that
  \begin{align*}
    \liminf_{k\to\infty}\nrg(Y\#\gamma_k)\ge\nrg(Y\#\gamma_*),
  \end{align*}
  so the sum on the right-hand side of \eqref{eq:liminfhelp} is greater or equal to $\anrg(\gamma_*|\rho_*)$.
\end{proof}

\subsection{Limsup condition}\label{ssec:LimsupCond}
\begin{prp}
  \label{prp:limsup}
  For every $\gamma_*\in\gspc(\rho_*)$ with $\anrg(\gamma_*|\rho_*)<\infty$, 
  there exists a sequence of $\gamma_k\in\gspc_{\delta_k}(\rho_k)$
  such that $\gamma_k\to\gamma_*$ narrowly, and \eqref{eq:limsup} holds.  
\end{prp}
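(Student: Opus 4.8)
The plan is to build the recovery sequence in two stages: first approximate $\gamma_*$ by a ``nice'' plan $\gamma_*^\eta$ that is absolutely continuous, bounded, supported where $|x-y|\le(1-\eta)\tau$ strictly inside the light cone, and with the $Y$-marginal bounded above and below on a fixed compact set; then discretize $\gamma_*^\eta$ to land in $\gspc_{\delta_k}(\rho_k)$ while controlling the entropy term, and finally take a diagonal sequence as $\eta\to0$. Since the target functional $\anrg_*$ is continuous under the first kind of approximation (the cost is bounded and continuous on $|x-y|\le\tau$, and $\nrg$ is continuous along bounded densities converging in $L^m$), it suffices to construct a recovery sequence for each $\gamma_*^\eta$ with $\limsup_k\anrg_k(\gamma_k^\eta)\le\anrg_*(\gamma_*^\eta)+o_\eta(1)$.

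For the discretization step, I would fix $\eta$ and proceed as follows. Let $\gamma_*^\eta = G\,\leb\otimes\leb$ with $G$ bounded and supported in $\{|x-y|\le(1-\eta)\tau\}\cap(B_R\times B_R)$. The $Y$-marginal needs to be pushed into $\prb_{\delta_k}(\Omega)$ (piecewise constant on cells) and the $X$-marginal needs to be corrected from $Y\#\gamma_*^\eta$-compatible data to exactly $\rho_k$; the latter is where $\rho_k\to\rho_*$ in $\wass_2$ is used, by composing with the optimal map $T_k$ from $\rho_*$ to $\rho_k$ (or a mollified version), which moves mass by an amount tending to zero, so the support stays inside $\{|x-y|\le\tau\}$ for $k$ large. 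To make $Y\#\gamma_k$ piecewise constant, I replace the $y$-variable by its conditional average over each cell $Q\in\cubes{\delta_k}$, i.e. act with the ``block-averaging'' map on the $y$-marginal via disintegration; because $\diam(Q)\le\sqrt d\,\delta_k\to0$ this is a small perturbation in $\wass_2$, preserves the $X$-marginal, and only enlarges $|x-y|$ by at most $\sqrt d\,\delta_k$, so the support stays in the light cone for large $k$. The resulting $\gamma_k^\eta$ converges narrowly (indeed in $\wass_2$) to $\gamma_*^\eta$, so the cost term and the $\nrg$-term converge to their limits by continuity; the remaining task is the entropy.

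The main obstacle, as the paper itself flags via hypothesis \eqref{eq:epsdelta}, is controlling $\eps_k\ent(\gamma_k^\eta)$. After block-averaging in $y$, the density of $\gamma_k^\eta$ on a product $Q\times Q'$ is roughly an average of $G$ over $Q'$ in the $y$-slot, but in the $x$-slot it is still whatever $G$ (composed with the marginal correction) gives; since $X\#\gamma_k=\rho_k$ need not be piecewise constant and need not be bounded below, $\ent(\gamma_k^\eta)$ can be large and even $+\infty$ if $\rho_k$ has thin tails or vanishes. The fix is to first mollify/truncate: replace $\rho_k$ inside the construction by a density $\rho_k^\delta$ that is bounded above and below by positive constants on a large ball and $\gamma_k$-compatible, paying a small $\wass_2$ and $\nrg$ error; then on each cell the density of $\gamma_k^\eta$ is bounded above by $C_\eta$ (depending on $\|G\|_\infty$ and the lower bound of $\rho_k^\delta$) so that $G_k\log G_k \le C_\eta' G_k$ pointwise, giving $\ent(\gamma_k^\eta)\le C_\eta'$ — a bound independent of $k$ but blowing up as $\eta\to0$. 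Hence $\eps_k\ent(\gamma_k^\eta)\le\eps_k C_\eta'\to0$ for each fixed $\eta$, and one concludes by a diagonal argument, choosing $\eta=\eta_k\to0$ slowly enough that $\eps_k C_{\eta_k}'\to0$ as well; this is exactly the place where a slow-enough rate for $\eps_k$ relative to $\delta_k$ (as in \eqref{eq:epsdelta}) may be needed, because $C_{\eta_k}'$ can implicitly depend on the number of cells meeting the support, i.e. on $\delta_k^{-d}$, making $\log C_{\eta_k}'$ comparable to $d\log(\delta_k^{-1})$.

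In summary: Step 1, reduce to nice $\gamma_*^\eta$ using continuity of $\anrg_*$. Step 2, for fixed $\eta$, correct the $X$-marginal to $\rho_k$ via the (mollified) optimal map from $\rho_*$ to $\rho_k$, staying inside the light cone. Step 3, block-average the $Y$-marginal over the cells of $\cubes{\delta_k}$ to enter $\prb_{\delta_k}(\Omega)$. Step 4, bound $\ent(\gamma_k^\eta)\le C_\eta$ using an a priori $L^\infty$ bound on the density coming from truncation/mollification, so $\eps_k\ent\to0$. Step 5, pass $k\to\infty$ to get $\limsup_k\anrg_k(\gamma_k^\eta)\le\anrg_*(\gamma_*^\eta)$, then send $\eta\to0$ and diagonalize, using \eqref{eq:epsdelta} to absorb the $\eta$-dependence of the entropy bound. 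The delicate point throughout is that the singularity of $\ccc$ on $\partial\oball$ forbids any approximation that pushes mass even slightly outside the light cone, so all the perturbations (marginal correction and block-averaging) must be quantitatively controlled to keep $|x-y|\le\tau$ for large $k$, and simultaneously the entropy must be kept finite — the two requirements pull in opposite directions, which is what makes the recovery sequence ``surprisingly delicate.''
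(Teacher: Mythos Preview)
Your outline has the right architecture --- adjust the $X$-marginal via the optimal map $T_k$, block-average the $Y$-marginal over cells, control the entropy --- and that is indeed the skeleton of the paper's construction. However, two of your steps contain genuine gaps that the paper resolves by different means.

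\textbf{The light cone after marginal correction.} You claim that composing with $T_k$ ``moves mass by an amount tending to zero, so the support stays inside $\{|x-y|\le\tau\}$ for $k$ large.'' This is not justified: $T_k\to\id$ only in $L^2(\rho_*\leb)$, not in $L^\infty$, so after applying $(T_k\circ X,Y)$ there is in general a set of positive (though small) measure on which $|T_k(x)-y|>\tau$, and there $\cc_\tau=+\infty$. Your preliminary $\eta$-contraction buys a margin $\eta\tau$, but $|T_k(x)-x|$ can still exceed it on a set of $\rho_*$-measure $O(\omega_k^2/\eta^2)$ (Chebyshev), which is small but not zero; and a ``mollified $T_k$'' no longer pushes $\rho_*$ to $\rho_k$ exactly. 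The paper deals with precisely this: it isolates the superluminal remainder $\gamma_k^{(2,0)}$ (mass $\le4\omega_k$, Lemma~\ref{lem:outsidesmall}) and redistributes it smoothly near the diagonal (Step~3b), while the bulk is pulled strictly inside the cone by a translation $-\sigma_k\beta$ applied \emph{separately in each orthant} $\beta\in\{-1,+1\}^d$ (Steps~2 and~3a, with the geometric verification in Lemma~\ref{lem:suppmeas}). The orthant split is essential because $\gamma_*$ may touch the whole sphere $|x-y|=\tau$, so no single translation can push it inward.

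\textbf{The entropy bound.} You propose to ``replace $\rho_k$ by a density $\rho_k^\delta$ bounded above and below, paying a small $\wass_2$ and $\nrg$ error.'' But the constraint $X\#\gamma_k=\rho_k$ sits inside an indicator in $\anrg_k$: any deviation makes $\anrg_k(\gamma_k)=+\infty$, so there is no finite error to pay. Since $\rho_k$ is merely in $L^m$ with finite second moment, no $L^\infty$ control on $G_k$ is available. The paper's argument (Lemma~\ref{lem:entconv}) sidesteps this entirely: after block-averaging in $y$, the density on $\setR^d\times Q$ is $g_k^Q(x)/|Q|$ with $\sum_Q g_k^Q=\rho_k$, and superadditivity of $s\mapsto s\log s$ gives directly
\[
\ent(\gamma_k)\le\int_{\setR^d}\rho_k\log\rho_k\dd x + d\log(\delta_k^{-1}),
\]
the first term being uniformly bounded by Lemma~\ref{lem:rlogr}. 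This is exactly where \eqref{eq:epsdelta} enters, and it makes your $L^\infty$-truncation and $\eta$-diagonalization superfluous.
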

For future reference, define $\eta_*:=Y\#\gamma_*$.
From our hypothesis $\nrg(Y\#\gamma_*)<\infty$, it follows that $\eta_*\in L^m(\setR^d)$.

\subsubsection{Construction of the recovery sequence}
\begin{figure}
  \label{fig:Steps}
  \subfigure[The density of $1-\theta_k^0$ in $\setR^2$.]{\includegraphics[width=0.48\textwidth]{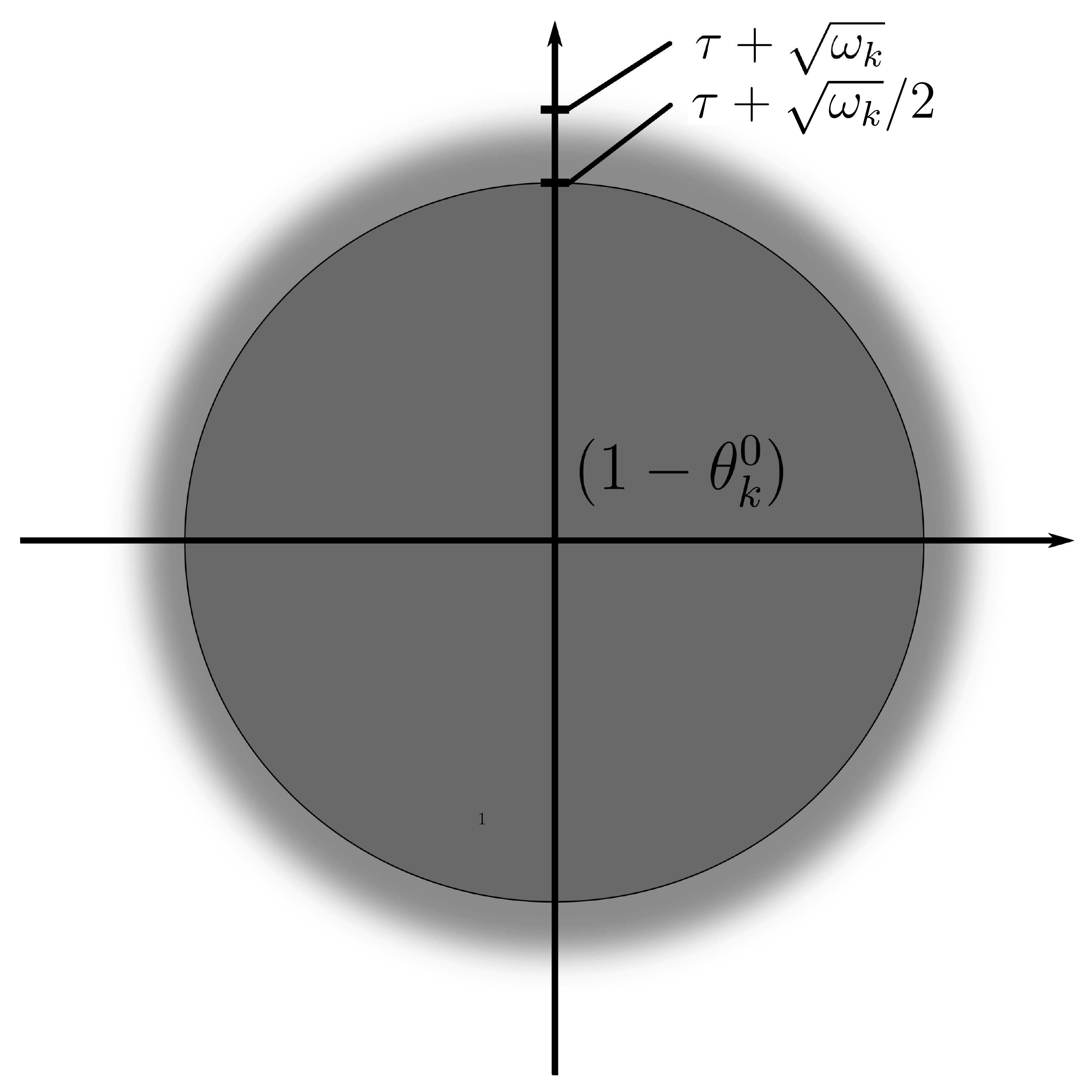}}
  \hfill
  \subfigure[The density of $\vartheta^\beta(1-\theta_k^0)$ in $\setR^2$ for $\beta=(+1,+1)$.]{\includegraphics[width=0.48\textwidth]{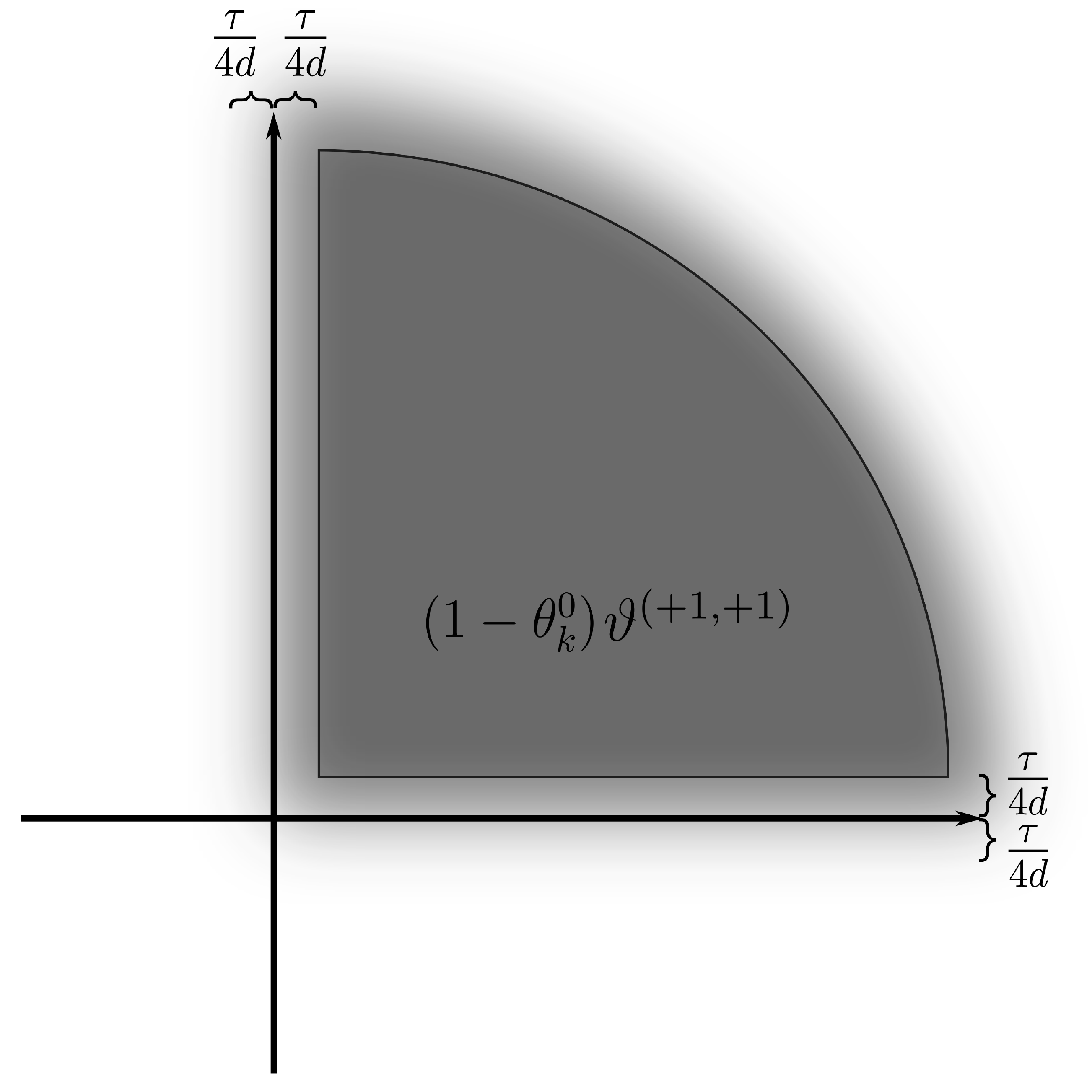}}
  \vspace*{-3mm}
  \caption{The two smoothed indicator functions used in Step 2 to cup up the transport map $\gamma_k^{(1)}$ displayed for $d=2$. Note that the set, on which both have density 1 has been emphasized by an additional black border and a small step in the grayscale. } 
\end{figure}

In the following, let $k=1,2,\ldots$ be fixed.
We are going to construct $\gamma_k\in\gspc_{\delta_k}(\rho_k)$ in several steps.
\smallskip

\textbf{Step 1:} 
\emph{Modify $\gamma_*$ into $\gamma^{(1)}$ 
  such that $X\#\gamma^{(1)}_k=\rho_k\leb$ and $Y\#\gamma^{(1)}_k=\eta_*\leb$.}

To that end, let $T_k:\setR^d\to\setR^d$ be an optimal map 
for the transport from $\rho_*$ to $\rho_k$ in $\wass_2$;
such a map exists since  $\rho_*$ is a probability density, and both $\rho_k$ and $\rho_*$ have finite second moment.
Then $\gamma^{(1)}_k:=(T_k\circ X,Y)\#\gamma_*$ has the desired marginals.
For later use, define
\begin{align}
  \label{eq:wass}
  \omega_k := \left(\sint|T_k(x)-x|^2\rho_*(x)\dd x\right)^{\frac12} = \wass_2(\rho_*,\rho_k),
\end{align}
which goes to zero by our hypothesis that $\rho_k$ converges to $\rho_*$ in $\wass_2$.
\smallskip

\textbf{Step 2:}
\emph{Decompose $\gamma^{(1)}_k$ into the sum of $2^d$ non-negative measures $\gamma^{(2,\beta)}_k$
  --- each of which fits into the cylinder $|x-y|\le\tau$ after proper translation ---
  and a remainder $\gamma^{(2,0)}_k$ of small mass.}

This is done with the help of several cut-off functions that we define now:
for each $\beta\in\{+1,-1\}^d$, choose $\vartheta^\beta\in C^\infty(\setR^d)$ 
such that 
\begin{itemize}
\item $0\le\vartheta^\beta\le1$ and
  \begin{align*}
    \sum_\beta\vartheta^\beta=1 \quad\text{on $\setR^d$},
  \end{align*}
\item $\vartheta^\beta$ is supported on the set where $\beta_jx_j\ge-\frac\tau{4d}$ for all $j=1,\ldots,d$.
\end{itemize}
Thus, each $\vartheta^\beta$ is essentially a smoothed indicator function for one of the $2^d$ orthants in $\setR^d$.
The label $\beta$ corresponds the signs of the $d$ coordinates in the respective orthant.
Next, let $\theta^0_k\in C^\infty(\setR^d)$ be a smoothed indicator function of the complement 
of the closed ball $\ball_\tau$ of radius $\tau$ with the following properties:
\begin{itemize}
\item $0\le\theta^0_k\le1$ and $|\nabla\theta^0_k|\le3\omega_k^{-1/2}$,
\item $\theta^0_k$ vanishes on $\ball_{\tau+\sqrt{\omega_k}/2}$,
  and is identical to one on the complement of $\ball_{\tau+\sqrt{\omega_k}}$.
\end{itemize}
Now define $\theta^\beta_k:=\vartheta^\beta(1-\theta^0_k)$ for all $\beta\in\{-1,+1\}^d$,
which are smoothed indicator functions of the sectors of the ball $\ball_\tau$
corresponding to the respective $\beta$-orthant (c.f. \textit{Figure \ref{fig:Steps} (b)}).
Note that
\begin{align}
  \label{eq:partunity}
  \theta^0_k + \sum_\beta\theta^\beta_k=1 \quad\text{on $\setR^d$}.
\end{align}
For brevity, introduce further $\Theta_k^\beta(x,y)=\theta_k^\beta(x-y)$ as well as $\Theta_k^0(x,y)=\theta_k^0(x-y)$,
and define
\begin{align*}
  \gamma^{(2,\beta)}_k:=\Theta_k^\beta\gamma^{(1)}_{k},
  \quad
  \gamma^{(2,0)}:=\Theta_k^0\gamma^{(1)}_k.
\end{align*}
From \eqref{eq:partunity}, it follows that 
\begin{align}
  \label{eq:twosum}
  \gamma^{(2,0)}_k+\sum_{\beta}\gamma^{(2,\beta)}_k = \gamma^{(1)}_k. 
\end{align}
Roughly speaking, $\gamma^{(2,0)}$ contains the part of $\gamma^{(1)}$ 
corresponding to transport with speed that exceeds 
--- by $\sqrt{\omega_k}/\tau$ or more --- the limit set by the flux limitation.
The part $\gamma^{(2,\beta)}$ corresponds to transport that either respects the flux limitation,
or violates it by --- no more than $\sqrt{\omega_k}/\tau$ --- in the $\beta$-directions.

\textbf{Step 3a:} 
\emph{Translate each of the $\gamma^{(2,\beta)}_{k}$ in $y$-direction
  to obtain a $\gamma^{(3,\beta)}$ that fits in the cylinder $|x-y|\le\tau-\delta_k$.}

With
\begin{align*}
  \sigma_k:=12\big(\delta_k+\sqrt{\omega_k}\big),
\end{align*}
we define $\gamma_{k}^{(3,\beta)}:=(X,Y-\sigma_k\beta)\#\gamma^{(2,\beta)}_{k}$.
The fact that $\gamma_{k}^{(3,\beta)}$ is supported in the aforementioned cylinder 
is not completely obvious, and is verified in Lemma \ref{lem:suppmeas} below.
\smallskip

\textbf{Step 3b:} 
\emph{From the remainder $\gamma^{(2,0)}_{k}$, define a measure $\gamma^{(3,0)}_{k}$,
  which has the same first marginal as $\gamma^{(2,0)}_k$ and a smooth second marginal,
  and is supported in the cylinder $|x-y|\le\tau/2$.}

Let $\lambda$ be a some smooth probability density on $\setR^d$ with support in $\ball_{\tau/2}$.
Consider the product measure $\gamma^{(2,0)}_k\otimes\lambda$ on $\setR^d\times\setR^d\times\setR^d$.
With $(X,X+Z)$ being the map $\setR^d\times\setR^d\times\setR^d\ni(x,y,z)\mapsto(x,x+z)\in \setR^d\times\setR^d$,
one easily sees that $\gamma_{k}^{(3,0)}:=(X,X+Z)\#(\gamma^{(2,0)}_{k}\otimes\lambda)$ has the desired properties.
Intuitively, on each vertical fiber $\{x\}\times\setR^d$, 
one redistributes the disintegrated mass of $\gamma^{(2,0)}$ in a smooth way around the point $y=x$.
\medskip

In summary of Steps 1--3, define
\begin{align*}
  \gamma^{(3)}_k := \gamma^{(3,0)}_{k} + \sum_{\beta}\gamma_{k}^{(3,\beta)}.
\end{align*}

\textbf{Step 4:}
\emph{Project $\gamma^{(3)}_k\in\gspc(\rho_k)$ onto a $\gamma_k\in\gspc_{\delta_k}(\rho_k)$.}

For each $Q\in\cubes{\delta_k}$, consider the Borel measure $\gamma_k^Q$ on $\setR^d$ 
defined by $\gamma_k^Q(A):=\gamma^{(3)}_k(A\times Q)$ for each measurable $A\subset\setR^d$.
Since 
\begin{align}
  \label{eq:sumg}
  \sum_{Q\in\cubes{\delta_k}}\gamma_k^Q = X\#\gamma^{(3)}_k = \rho_k\leb,
\end{align}
it follows that $\gamma_k^Q$ possesses a non-negative Lebesgue density $g_k^Q\in L^1(\setR^d)$.
From the $g_k^Q$, we define a probability density function $G_k\in L^1(\setR^d\times\setR^d)$ via
\begin{align}
  \label{eq:Gk}
  G_k(x,y) := \frac{g_k^Q(x,y)}{|Q|} \quad\text{where $Q\in\cubes{\delta_k}$ is chosen such that $y\in Q$}.
\end{align}
Our final definition of the recovery sequence is $\gamma_k:=G_k\leb\otimes\leb$.

\subsubsection{Properties of the recovery sequence}
We prove various properties of the sequence $(\gamma_k)$ that eventually allow to conclude \eqref{eq:limsup}.
\begin{lem}
  \label{lem:gk}
  $\gamma_k\in\gspc_{\delta_k}(\rho_k)$.
  Moreover, its second moment is $k$-uniformly bounded.
\end{lem}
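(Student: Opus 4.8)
The plan is to verify the two assertions of Lemma~\ref{lem:gk} separately: membership $\gamma_k\in\gspc_{\delta_k}(\rho_k)$, and the uniform second-moment bound. For membership we must check three things: that $\gamma_k$ is a probability measure, that its $X$-marginal is $\rho_k\leb$, and that its $Y$-marginal lies in $\prb_{\delta_k}(\Omega)$. All three are essentially bookkeeping on the construction \eqref{eq:Gk}. First I would observe that each step 1--3 preserves total mass and the $X$-marginal: Step~1 replaces the $X$-variable by $T_k\circ X$ with $T_k\#\rho_*=\rho_k$, so $X\#\gamma^{(1)}_k=\rho_k\leb$; the decomposition in Step~2 is just a partition of unity in the variable $x-y$, so it preserves everything when summed; the translations in Steps~3a/3b only move the $Y$-variable (in 3b the auxiliary $z$ is integrated out against a probability density $\lambda$), hence $X\#\gamma^{(3)}_k=\rho_k\leb$ still, which is exactly \eqref{eq:sumg}. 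Then, since $\sum_Q\gamma_k^Q=\rho_k\leb\ll\leb$, each $\gamma_k^Q$ is absolutely continuous with density $g_k^Q\ge0$, and $\int g_k^Q=\gamma^{(3)}_k(\setR^d\times Q)$. Summing the defining relation \eqref{eq:Gk} over $Q$ and integrating in $x$ gives $\iint G_k=\sum_Q\frac1{|Q|}\int g_k^Q\cdot|Q|=\sum_Q\gamma^{(3)}_k(\setR^d\times Q)=1$, so $\gamma_k$ is a probability measure. Integrating $G_k$ in $x$ alone gives $\int G_k(x,y)\dd x=g_k^Q(y)$-type expression divided by $|Q|$ for $y\in Q$, hence $Y\#\gamma_k$ has a density that is constant on each cell $Q\in\cubes{\delta_k}$; that is precisely $Y\#\gamma_k\in\prb_{\delta_k}(\Omega)$. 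Finally, for $A\subset\setR^d$ measurable, $\gamma_k(A\times\setR^d)=\sum_Q\frac1{|Q|}\iint_{A\times Q}g_k^Q(x)\dd x\dd y=\sum_Q\int_A g_k^Q(x)\dd x=\sum_Q\gamma_k^Q(A)=\rho_k\leb(A)$ by \eqref{eq:sumg}, so $X\#\gamma_k=\rho_k\leb$ and thus $\gamma_k\in\gspc_{\delta_k}(\rho_k)$.

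For the second moment, the strategy is to bound it by the second moment of $\gamma^{(3)}_k$ plus an error of order $\delta_k^2$ coming from the cell-averaging in Step~4, and then to control the second moment of $\gamma^{(3)}_k$ by tracing back through Steps~1--3. In Step~4, the map sending $\gamma^{(3)}_k$ to $\gamma_k$ replaces, on each cell $Q$, the $Y$-mass by its average over $Q$; since $\diam(Q)\le\sqrt d\,\delta_k$ by \eqref{eq:diam}, the $y$-coordinate is displaced by at most $\sqrt d\,\delta_k$, so $\iint|y|^2\dd\gamma_k\le 2\iint|y|^2\dd\gamma^{(3)}_k+2d\,\delta_k^2$, while the $x$-marginal is unchanged. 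For $\gamma^{(3)}_k$: its $X$-marginal is $\rho_k\leb$, whose second moment is $k$-uniformly bounded by Lemma~\ref{lem:rlogr}; and its support lies in $\{|x-y|\le\tau\}$ by construction (Steps~3a/3b, with the support statement of Step~3a deferred to Lemma~\ref{lem:suppmeas}), so $|y|\le|x|+\tau$ on $\supp\gamma^{(3)}_k$ and hence $\iint|y|^2\dd\gamma^{(3)}_k\le 2\sint|x|^2\rho_k\dd x+2\tau^2$. Combining, $\iint(|x|^2+|y|^2)\dd\gamma_k\le C(1+\sup_k\sint|x|^2\rho_k\dd x+\tau^2+\delta_k^2)$, which is $k$-uniformly bounded since $\delta_k\to0$.

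I do not expect a serious obstacle here; the lemma is a sequence of routine verifications. The only point requiring a little care is the claim that $\gamma^{(3)}_k$ (equivalently, the pieces $\gamma^{(3,\beta)}_k$) is supported in the cylinder $\{|x-y|\le\tau\}$, which is precisely what Lemma~\ref{lem:suppmeas} (invoked in Step~3a) provides, so I would simply cite it. A minor subtlety is making sure the absolute-continuity argument for $\gamma_k^Q$ is phrased correctly: one uses that $\gamma_k^Q$ is dominated by $X\#\gamma^{(3)}_k=\rho_k\leb$, so it has an $L^1$ density, and then the normalization by $|Q|$ in \eqref{eq:Gk} is exactly what makes the cell-wise densities $G_k$ constant in $y$ on each $Q$ while keeping the total mass equal to one.
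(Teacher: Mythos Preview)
Your proof is correct and follows essentially the same route as the paper's. The one cosmetic difference is in the second-moment bound: the paper invokes Lemma~\ref{lem:suppmeas} directly for $\gamma_k$ (supported in $|x-y|\le\tau$) and concludes $\iint(|x|^2+|y|^2)\dd\gamma_k\le 2\tau^2+3\sint|x|^2\rho_k\dd x$ in one stroke, whereas you pass through $\gamma^{(3)}_k$ and add a separate $O(\delta_k^2)$ cell-averaging error---a harmless extra step that the stronger form of Lemma~\ref{lem:suppmeas} makes unnecessary.
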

\begin{proof}
  This is essentially clear from the construction.

  First, $\gamma_k$ is a probability measure since the construction is a combination of 
  push-forwards (Steps 1 and 3),
  decomposition into a finite sum of non-negative measures (Step 2),
  re-arrangement of these components (Step 3),
  and finally a projection (Step 4),
  each of which is easily checked to preserve non-negativity and total mass of the measure.

  Second, the $X$-marginal of $\gamma_k$ is $\rho_k\leb$, since
  Step 1 is made such that $X\#\gamma^{(1)}=T_k\#(X\#\gamma_*)=T_k\#(\rho_*\leb)=\rho_k\leb$,
  and all further steps keep the $X$-marginal fixed.
  
  Third, $\gamma_k$ has finite and, in fact, even $k$-uniformly bounded second moment.
  Indeed, since $\gamma_k$ is supported in $|x-y|\le\tau$ 
  (which follows from the purely geometric considerations in Lemma \ref{lem:suppmeas} below),
  one has $\gamma_k$-a.e. that
  \begin{align*}
    |y|^2 = |x+(y-x)|^2 \le 2|x|^2+2\tau^2
  \end{align*}
  and therefore, recalling that $\gamma_k$ has $X$-marginal $\rho_k\leb$,
  \begin{align*}
    \dint \big(|x|^2+|y|^2\big)\dd\gamma_k
    \le \dint \big(3|x|^2+2\tau^2\big)\dd\gamma_k
    = 2\tau^2+3\sint|x|^2\rho_k(x)\dd x.
  \end{align*}
  The last expression is finite, and is even $k$-uniformly bounded 
  since the same is true for $\rho_k$'s second moment, see Lemma \ref{lem:rlogr}.
\end{proof}
\begin{lem}
  \label{lem:entconv}
  There is a constant $C$ such that
  \begin{align*}
    \dint G_k(x,y)\log G_k(x,y)\dd(x,y) \le C + d\log(\delta_k^{-1}).
  \end{align*}
  Consequently, $\eps_k\ent(\gamma_k)\to0$.
\end{lem}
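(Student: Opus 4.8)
The plan is to bound the entropy of $\gamma_k = G_k\leb\otimes\leb$ by relating $G_k$ to the densities of the intermediate measures $\gamma^{(3,\beta)}_k$ and $\gamma^{(3,0)}_k$ constructed in Steps 1--4, and then tracking how each construction step changes the entropy. The key observation is that the only step that can {\em increase} the entropy uncontrollably is the projection in Step 4, where a density is replaced by its cell-average in the $y$-variable, costing at most $d\log(\delta_k^{-1})$ per unit mass (this is the source of the additive $d\log(\delta_k^{-1})$ term); all other steps either leave the entropy unchanged or decrease it, or change it by a $k$-uniformly bounded amount.

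Concretely, I would proceed as follows. First, by convexity of $r\mapsto r\log r$ and Jensen's inequality applied on each cell $Q\in\cubes{\delta_k}$ in the $y$-variable, I would show that the negative entropy of $\gamma_k$ is bounded above by the negative entropy of $\gamma^{(3)}_k$ plus $\log(|Q|^{-1})\le d\log((\underline r\delta_k)^{-1}) = C + d\log(\delta_k^{-1})$, using \eqref{eq:diam}; here one must be slightly careful because $\gamma^{(3)}_k$ need not be absolutely continuous, but the measure $\gamma_k^Q$ {\em is} absolutely continuous thanks to \eqref{eq:sumg}, so the averaging in \eqref{eq:Gk} makes $G_k$ well-defined with $\ent(\gamma_k)\le \sum_Q \int g_k^Q\log g_k^Q + (\text{const})$, and then $\sum_Q\int g_k^Q\log g_k^Q \le \ent(\gamma^{(3)}_k)$ by the data-processing/subadditivity of relative entropy. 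Second, I would bound $\ent(\gamma^{(3)}_k)$ from above by the sum $\ent(\gamma^{(3,0)}_k)+\sum_\beta\ent(\gamma^{(3,\beta)}_k)$ up to a $k$-uniform additive constant, using that the $\gamma^{(3,\cdot)}_k$ are a partition of $\gamma^{(3)}_k$ into at most $2^d+1$ pieces with $k$-uniformly bounded total mass and second moment (Lemma \ref{lem:gk}); splitting a measure into finitely many pieces changes the entropy by a bounded amount since $\ent(\mu_1+\mu_2)\ge \ent(\mu_1)+\ent(\mu_2)$ always, and the reverse is controlled by the masses. Third, for the translated pieces $\gamma^{(3,\beta)}_k=(X,Y-\sigma_k\beta)\#\gamma^{(2,\beta)}_k$, translation is entropy-invariant, and $\gamma^{(2,\beta)}_k = \Theta^\beta_k\gamma^{(1)}_k$ with $0\le\Theta^\beta_k\le 1$, so $\ent(\gamma^{(2,\beta)}_k)\le \ent(\gamma^{(1)}_k)$ up to a bounded additive term (multiplying a density by a factor in $[0,1]$ cannot increase $\int G\log G$ by more than a constant depending on the mass). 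For the smoothed remainder $\gamma^{(3,0)}_k=(X,X+Z)\#(\gamma^{(2,0)}_k\otimes\lambda)$, the convolution with the fixed smooth density $\lambda$ gives $\ent(\gamma^{(3,0)}_k)\le \ent(\gamma^{(2,0)}_k\otimes\lambda) = \ent(X\#\gamma^{(2,0)}_k) + (\text{mass})\cdot\ent(\lambda)$ by the shear-invariance of entropy and the fact that $X\#\gamma^{(2,0)}_k$ is a submeasure of $\rho_k\leb$, whose entropy is $k$-uniformly bounded by Lemma \ref{lem:rlogr}. Fourth, $\ent(\gamma^{(1)}_k)=\ent((T_k\circ X,Y)\#\gamma_*)$ is controlled by $\ent(\gamma_*)$ modulo the Jacobian of the optimal map $T_k$, which is harmless since actually $Y\#\gamma^{(1)}_k=\eta_*\leb$ and one can bound $\ent(\gamma^{(1)}_k)$ through its $y$-marginal entropy $\ent(\eta_*)<\infty$ plus the conditional-entropy contribution; alternatively, and more cleanly, one bounds everything in terms of $\ent(\eta_*)=\int\eta_*\log\eta_* <\infty$ (finite since $\eta_*\in L^m$) and the second moments, sidestepping $T_k$ entirely by observing the $y$-marginals of all the $\gamma^{(\cdot)}_k$ are controlled. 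Finally, combining these estimates yields $\dint G_k\log G_k \le C + d\log(\delta_k^{-1})$; multiplying by $\eps_k$ and invoking hypothesis \eqref{eq:epsdelta} that $\eps_k\log(\delta_k^{-1})\to0$, together with the $k$-uniform lower bound on $\ent(\gamma_k)$ from Lemma \ref{lem:anrgbelow} (or from the second-moment bound in Lemma \ref{lem:gk} via the classical estimate), gives $\eps_k\ent(\gamma_k)\to 0$.

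The main obstacle I expect is the bookkeeping in the projection step and the handling of the non-absolutely-continuous pieces: one must verify carefully that applying Jensen's inequality cell-by-cell in the $y$-variable really produces exactly the gain $d\log(\delta_k^{-1})$ and not something worse, and that the intermediate measures $\gamma^{(2,0)}_k$, $\gamma^{(3,0)}_k$ (which involve the singular part of $\gamma_*$, if any) still have their relevant marginals absolutely continuous so that the entropy manipulations are legitimate. A secondary subtlety is that $1-\theta^0_k$ has a $k$-dependent gradient bound $|\nabla\theta^0_k|\le 3\omega_k^{-1/2}$, so one should confirm that the cut-off in Step 2 does not introduce any $k$-dependent blow-up in the entropy --- it does not, because $\Theta^\beta_k$ enters only as a {\em multiplicative} factor in $[0,1]$, and pointwise multiplication by a bounded function changes $\int G\log G$ only by a bounded amount independent of the regularity of the factor.
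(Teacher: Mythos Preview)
Your plan has a genuine gap. You propose to bound $\sum_Q\int g_k^Q\log g_k^Q$ by $\ent(\gamma^{(3)}_k)$ and then to track $\ent(\gamma^{(3)}_k)$ back through Steps 1--3. But $\gamma^{(3)}_k$ need not be absolutely continuous: the original $\gamma_*$ is an arbitrary coupling with $\anrg_*(\gamma_*)<\infty$, which only forces the \emph{marginals} $\rho_*$ and $\eta_*$ to have densities, not $\gamma_*$ itself. None of Steps 1, 2, 3a produces absolute continuity of the joint measure (push-forward by $(T_k\circ X,Y)$, multiplication by smooth cutoffs, translation), so $\ent(\gamma^{(1)}_k)=\ent(\gamma^{(2,\beta)}_k)=\ent(\gamma^{(3,\beta)}_k)=+\infty$ in general. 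Your chain of inequalities through $\ent(\gamma^{(3)}_k)$ is therefore vacuous, and the ``alternative'' via $\ent(\eta_*)$ does not help either, since controlling a marginal's entropy does not control the joint entropy.

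The paper's argument avoids this completely by never looking at $\ent(\gamma^{(3)}_k)$. After the exact computation
\[
\dint G_k\log G_k\,\dn(x,y)
=\sum_Q\left[\int g_k^Q\log g_k^Q\,\dn x - \log|Q|\int g_k^Q\,\dn x\right],
\]
it uses pointwise the \emph{superadditivity} of $s\mapsto s\log s$, i.e.\ $a\log a+b\log b\le(a+b)\log(a+b)$, together with $\sum_Q g_k^Q(x)=\rho_k(x)$ from \eqref{eq:sumg}, to obtain
\[
\sum_Q\int g_k^Q\log g_k^Q\,\dn x \le \int\rho_k\log\rho_k\,\dn x,
\]
which is $k$-uniformly bounded by Lemma \ref{lem:rlogr}. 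The $-\log|Q|$ term is handled by $|Q|\ge(\underline r\delta_k)^d$ from \eqref{eq:diam}, contributing the $d\log(\delta_k^{-1})$ plus a constant. Thus the whole argument uses only the absolute continuity of the \emph{first marginal} $\rho_k$, which is guaranteed by construction, and bypasses Steps 1--3 entirely. Your final paragraph (lower bound on $\ent(\gamma_k)$ from the second-moment bound of Lemma \ref{lem:gk}, plus \eqref{eq:epsdelta}) is correct and is what is needed to conclude $\eps_k\ent(\gamma_k)\to0$.
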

\begin{proof}
  By definition of $G_k$,
  \begin{align*}
    \dint G_k(x,y)\log G_k(x,y)\dd(x,y) 
    &= \sum_{Q\in\cubes{\delta_k}}\iint_{\setR^d\times Q} 
      \left(\frac{g_k^Q(x)}{|Q|}\right)\log\left(\frac{g_k^Q(x)}{|Q|}\right)\dd(x,y) \\
    &= \sum_{Q\in\cubes{\delta_k}}\left[\sint g_k^Q(x)\log g_k^Q(x)\dd x - \log(|Q|)\sint g_k^Q(x)\dd x\right] \\
    &\le \sint\rho_k(x)\log\rho_k(x)\dd x -  d\log(\delta_k)\sint\rho_k(x)\dd x,   
  \end{align*}
  where we have estimated $|Q|\ge\delta_k^d$ on grounds of \eqref{eq:diam},
  and have used \eqref{eq:sumg} in combination with the superadditivity of the function $s\mapsto s\log s$,
  that is,
  \begin{align*}
    a\log a + b\log b \le (a+b)\log(a+b) \quad \text{for arbitrary $a,b\ge0$}.
  \end{align*}
  The latter is an immediate consequence of the monotonicity of the logarithm.
  Recalling Lemma \ref{lem:rlogr} and our assumption \eqref{eq:epsdelta}, the convergence follows.
\end{proof}
\begin{lem}
  \label{lem:suppmeas}
  For all $k$ large enough, 
  the $\gamma_k$ are supported in $|x-y|\le\tau$.
\end{lem}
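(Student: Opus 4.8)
The plan is to trace the supports of the intermediate measures $\gamma^{(2,\beta)}_k$, $\gamma^{(3,\beta)}_k$, $\gamma^{(3,0)}_k$ through the construction in Steps 1--4 and show that each lands inside the closed cylinder $\{|x-y|\le\tau\}$ — then the support of $\gamma_k$, which is obtained from $\gamma^{(3)}_k$ by the local projection in Step 4 (which only moves the $y$-coordinate within a cell $Q$ of diameter $\le\sqrt d\,\delta_k$), is contained in $\{|x-y|\le\tau\}$ as well, provided the earlier $\gamma^{(3)}_k$ sits in the slightly smaller cylinder $\{|x-y|\le\tau-\sqrt d\,\delta_k\}$. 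So I would actually prove the sharper statement that $\gamma^{(3)}_k$ is supported in $\{|x-y|\le\tau-\sqrt d\,\delta_k\}$ for $k$ large.

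\textbf{Step A (the remainder piece).} By construction in Step 3b, $\gamma^{(3,0)}_k=(X,X+Z)\#(\gamma^{(2,0)}_k\otimes\lambda)$ with $\lambda$ supported in $\ball_{\tau/2}$, hence $\gamma^{(3,0)}_k$ is supported in $\{|x-y|\le\tau/2\}$, which is inside $\{|x-y|\le\tau-\sqrt d\,\delta_k\}$ once $\delta_k$ is small enough that $\sqrt d\,\delta_k\le\tau/2$.

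\textbf{Step B (the $\beta$-sectors).} This is the main obstacle and requires the quantitative bounds built into $\theta^0_k$ and into $\sigma_k$. By Step 2, $\gamma^{(2,\beta)}_k=\Theta^\beta_k\gamma^{(1)}_k$ is supported where $\theta^\beta_k(x-y)=\vartheta^\beta(x-y)(1-\theta^0_k(x-y))\ne0$; since $\theta^0_k\equiv1$ outside $\ball_{\tau+\sqrt{\omega_k}}$, the factor $1-\theta^0_k$ forces $|x-y|\le\tau+\sqrt{\omega_k}$, while the support condition on $\vartheta^\beta$ forces $\beta_j(x_j-y_j)\ge-\tfrac{\tau}{4d}$ for every coordinate $j$. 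In Step 3a we subtract $\sigma_k\beta$ from the $y$-coordinate, so a point $(x,y')$ in the support of $\gamma^{(3,\beta)}_k$ satisfies $x-y'=(x-y)+\sigma_k\beta$ with $(x,y)$ as above. Writing $w:=x-y$, I would decompose $|w+\sigma_k\beta|^2=|w|^2+2\sigma_k\langle\beta,w\rangle+\sigma_k^2 d$. The cross term $\langle\beta,w\rangle=\sum_j\beta_j w_j$ is controlled from below coordinatewise by $-\tfrac{\tau}{4d}$ per coordinate, giving $\langle\beta,w\rangle\ge-\tfrac\tau4$; but we need an \emph{upper} bound on $|w+\sigma_k\beta|^2$, so I actually want $\langle\beta,w\rangle$ bounded above, which follows from $|w|\le\tau+\sqrt{\omega_k}$ via Cauchy--Schwarz: $\langle\beta,w\rangle\le\sqrt d\,(\tau+\sqrt{\omega_k})$. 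That, combined with $|w|\le\tau+\sqrt{\omega_k}$ and $\sigma_k=12(\delta_k+\sqrt{\omega_k})$, would a priori give $|w+\sigma_k\beta|\le\tau+\sqrt{\omega_k}+\sigma_k\sqrt d$, which is \emph{larger} than $\tau$, not smaller — so the naive estimate fails and the geometry has to be used more carefully.

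\textbf{The real argument for Step B.} The point of translating by $-\sigma_k\beta$ is that it \emph{contracts} the $\beta$-sector: on the support of $\vartheta^\beta$ we have $\beta_j w_j\ge-\tfrac{\tau}{4d}$, so each coordinate $w_j+\sigma_k\beta_j$, when $\beta_j w_j\ge0$, has $\beta_j(w_j+\sigma_k\beta_j)=\beta_j w_j+\sigma_k\ge\sigma_k>0$ — no help — but when $-\tfrac{\tau}{4d}\le\beta_j w_j<0$ the sign flips toward zero only if $\sigma_k$ is not too large. The honest computation: since $(x,y)\in\supp\gamma^{(1)}_k$ means $y=Y(\ldots)$ lies in $\supp\eta_*$ and $x=T_k(\tilde x)$ for $(\tilde x,y)\in\supp\gamma_*$, and since $\anrg_*(\gamma_*)<\infty$ forces $\gamma_*$ (hence its relevant part) to be supported in $\{|\tilde x-y|\le\tau\}$, the displacement $|\tilde x-y|\le\tau$, and $|x-\tilde x|=|T_k(\tilde x)-\tilde x|$ is small on average; but pointwise control needs the $\theta^0_k$ cutoff, which is precisely why $\gamma^{(2,0)}_k$ was split off. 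After the split, on $\supp\gamma^{(2,\beta)}_k$ one has $|x-y|\le\tau+\sqrt{\omega_k}$ \emph{and} the orthant condition, and the key inequality to establish is
\begin{align*}
  |w+\sigma_k\beta|\le\tau-\sqrt d\,\delta_k\qquad\text{whenever }|w|\le\tau+\sqrt{\omega_k},\ \beta_j w_j\ge-\tfrac{\tau}{4d}\ \forall j,
\end{align*}
for all sufficiently large $k$. I would prove this by squaring: $|w+\sigma_k\beta|^2=|w|^2+2\sigma_k\sum_j\beta_j w_j+d\sigma_k^2$. Split the sum over $J_+:=\{j:\beta_jw_j\ge0\}$ and $J_-:=\{j:\beta_jw_j<0\}$. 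On $J_-$, $\beta_jw_j\ge-\tfrac{\tau}{4d}$, so $\sum_{J_-}\beta_jw_j\ge-\tfrac{\tau}{4}$. On $J_+$, bound $\sum_{J_+}\beta_jw_j=\sum_{J_+}|w_j|\le\sqrt{|J_+|}\,\bigl(\sum_{J_+}w_j^2\bigr)^{1/2}\le\sqrt d\,|w|\le\sqrt d(\tau+\sqrt{\omega_k})$; but we want the translation to \emph{reduce} the norm, so instead note that on $J_+$ the translation strictly increases $|w_j+\sigma_k\beta_j|$, which is bad — therefore the correct reading is that the purpose of Step 3a is not to shrink $|x-y|$ below $\tau$ in general, but only to move $\gamma^{(2,\beta)}_k$ off the region $|x-y|>\tau$ that it may partially occupy (by at most $\sqrt{\omega_k}$) by shifting in the $-\beta$ direction where the excess lives. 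Thus the right claim is coordinate-wise: if $(x,y)\in\supp\gamma^{(2,\beta)}_k$ then for each $j$, $\beta_j(x_j-y_j)\le \tau$ is \emph{not} guaranteed, but the full vector satisfies $|x-y|\le\tau+\sqrt{\omega_k}$, and after subtracting $\sigma_k\beta$ from $y$, using $\sigma_k\ge 12\sqrt{\omega_k}$ and the orthant lower bound, a careful expansion of the square — keeping the $-2\sigma_k\sum_{J_-}|w_j|$ gain against the $+2\sigma_k\sum_{J_+}|w_j|+d\sigma_k^2$ loss and the slack $\sqrt{\omega_k}$ between $\tau$ and $|w|$ — yields $|w+\sigma_k\beta|^2\le\tau^2-c\sqrt d\,\delta_k\tau$ for $k$ large, because $\delta_k,\omega_k\to0$ make all error terms of order $o(1)$ while the gain from the orthant structure is of order $1$. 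I expect the bookkeeping of these competing $O(\sigma_k)$ terms — and verifying that for large $k$ the negative contributions dominate — to be the delicate part; once the squared-norm inequality is in hand, combining Steps A and B gives $\gamma^{(3)}_k\subset\{|x-y|\le\tau-\sqrt d\,\delta_k\}$, and then Step 4's cell-local rearrangement of the $y$-variable (moving $y$ by at most $\diam Q\le\sqrt d\,\delta_k$) pushes the support into $\{|x-y|\le\tau\}$, which is the assertion of Lemma \ref{lem:suppmeas}.
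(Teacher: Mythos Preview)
Your overall structure is right: reduce to showing $\gamma^{(3)}_k$ sits in $\{|x-y|\le\tau-\sqrt d\,\delta_k\}$, handle $\gamma^{(3,0)}_k$ trivially, then treat the $\beta$-sectors. The gap is entirely in Step~B, where you never actually establish the key inequality and your $J_+/J_-$ split cannot close it. First, there is a sign issue (inherited from a typo in the paper's Step~3a): the translation must effectively send $w:=x-y$ to $w-\sigma_k\beta$, not $w+\sigma_k\beta$; with your sign the ``gain'' from $J_-$ is at most $\tfrac12\tau\sigma_k$ while the ``loss'' from $J_+$ is of the same order or larger, so your claimed ``order~1 gain from the orthant structure'' is false --- both competing terms are $O(\sigma_k)$ and the wrong one wins. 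Your final sentence concedes that the bookkeeping is ``the delicate part'' without doing it; that is the whole lemma.

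The missing idea is the elementary coordinate-wise estimate $\beta_j w_j\ge|w_j|-\tfrac{\tau}{2d}$, valid for every $j$: if $\beta_j w_j\ge0$ this equals $|w_j|$, and if $-\tfrac{\tau}{4d}\le\beta_j w_j<0$ then $|w_j|\le\tfrac{\tau}{4d}<\tfrac{\tau}{2d}$. Summing gives $\langle\beta,w\rangle\ge\sum_j|w_j|-\tfrac\tau2\ge|w|-\tfrac\tau2$, and with the correct sign
\[
  |w-\sigma_k\beta|^2=|w|^2-2\sigma_k\langle\beta,w\rangle+d\sigma_k^2\le|w|^2-2\sigma_k|w|+\tau\sigma_k+d\sigma_k^2.
\]
The right-hand side, as a quadratic in $|w|\in[0,\tau+\sqrt{\omega_k}]$, is maximized at the endpoint $|w|=\tau+\sqrt{\omega_k}$, and a direct estimate using $\sigma_k=12(\delta_k+\sqrt{\omega_k})$ together with $\sigma_k\le\tfrac{\tau}{4d}$ shows it is below $(\tau-\delta_k)^2$ for $k$ large. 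The paper reaches the same conclusion by writing the support set $S^\beta$ of $\theta^\beta_k$ as the convex hull of a single vertex near the origin and the spherical cap at radius $\tau+\sqrt{\omega_k}$, and checking each extreme piece separately; but the analytic core is precisely the lower bound $\langle\beta,w\rangle\ge\tfrac\tau2$ on the cap, which your $J_\pm$ decomposition obscures rather than isolates.
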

\begin{proof}
  The main step is to show that the measures $\gamma^{(3)}_k$ are supported in $|x-y|\le\tau-\delta_k$.
  The function $\theta_k^\beta$ is supported on the set 
  \begin{align*}
    S^\beta:=\left\{ y\in\setR^d\,;\,\beta_jy_j\ge-\frac\tau{4d}\text{ for all $j$},\ |y|\le\tau+\sqrt{\omega_k}\right\}.
  \end{align*}
  We show that the translate $S^\beta-\sigma_k\beta$ is a subset of $\ball_{\tau-\delta_k}$.
  Observe that $S^\beta$ is the convex hull of the point $o^\beta:=-\frac\tau{4d}\beta$ 
  and the spherical cap
  \begin{align*}
    \mathfrak S^\beta = \left\{ y\in\setR^d\,;\,\beta_jy_j\ge-\frac\tau{4d}\text{ for all $j$},\ |y|=\tau+\sqrt{\omega_k}\right\}.
  \end{align*}
  Since $\ball_{\tau-\sqrt{\omega_k}}$ is convex, it thus suffices to verify that 
  the translate of $o^\beta$, i.e., the point $-\left(\frac\tau{4d}+\sigma_k\right)\beta$,
  and the translate of the cap, i.e., $\mathfrak S^\beta-\sigma_k\beta$, 
  belong to $\ball_{\tau-\delta_k}$.
  For all $k$ large enough so that $\sigma_k\le\frac\tau{4d}$, 
  the claim $-\left(\frac\tau{4d}+\sigma_k\right)\beta\in \ball_{\tau-\delta_k}$ is obvious.
  To prove that also $\mathfrak S^\beta\subset \ball_{\tau-\delta_k}$,
  consider an arbitrary point $x\in\mathfrak S^\beta$.
  Observing that
  \begin{align*}
    \beta\cdot x = \sum_j\beta_jx_j 
    \ge \sum_j\left(|x_j|-\frac\tau{2d}\right) 
    = \sum_j|x_j| - \frac\tau2
    \ge \tau+\sqrt{\omega_k}-\frac\tau2 
    \ge \frac\tau2,
  \end{align*}
  it follows that
  \begin{align*}
    |x-\sigma_k\beta|^2
    =|x|^2 + \sigma_k^2|\beta|^2 - 2\sigma_k\beta\cdot x
    \le\big(\tau+\sqrt{\omega_k}\big)^2 + d\sigma_k^2 - \tau\sigma_k.
  \end{align*}
  Recall that $k$ is large enough such that $\sigma_k\le\frac\tau{4d}$;
  on the one hand, this yields that
  \begin{align*}
    d\sigma_k^2-\tau\sigma_k \le -\frac34\tau\sigma_k,
  \end{align*}
  and on the other hand, we obtain
  \begin{align*}
    \big(\tau+\sqrt{\omega_k}\big)^2-\big(\tau-\delta_k\big)^2
    = \big(2\tau+\sqrt{\omega_k}-\delta_k\big)\big(\sqrt{\omega_k}+\delta_k\big)
    \le3\tau \big(\sqrt{\omega_k}+\delta_k\big)
    \le\frac\tau4\sigma_k.
  \end{align*}
  In summary, we conclude that
  \begin{align*}
    |x-\sigma_k\beta|^2 \le \big(\tau-\delta_k\big)^2,
  \end{align*}
  which verifies that $\mathfrak S^\beta-\sigma_k\beta\subset\ball_{\tau-\delta_k}$.

  By definition, $\gamma^{(2,\beta)}_{k}$ is supported in the region where $y-x\in S^\beta$.
  Its translate $\gamma^{(3,\beta)}_k=(X,Y-\sigma_k\beta)\#\gamma^{(2,\beta)}_{k}$ is therefore supported
  where $y-x\in S^\beta-\sigma_k\beta\subset\ball_{\tau-\delta_k}$,
  where the inclusion is a consequence of the considerations above.

  This proves that each $\gamma^{(3)}_k$ is supported in $|x-y|\le\tau-\delta_k$.
  From the construction of $\gamma_k$ it is clear that $\supp\gamma_k$ intersects $\{x\}\times Q$
  for some $x\in\setR^d$ and $Q\in\cubes{\delta_k}$ only if $\supp\gamma^{(3)}_k$ intersects $\{x\}\times Q$.
  Since the distance of two points in $Q$ is less than $\delta_k$ by \eqref{eq:diam},
  it follows that $\gamma_k$ is supported in $|x-y|\le\tau$.
\end{proof}
\begin{lem}
  \label{lem:outsidesmall}
  $\gamma^{(2,0)}_k$'s total mass does not exceed $4\omega_k$.
\end{lem}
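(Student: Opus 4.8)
The plan is to estimate the total mass of $\gamma^{(2,0)}_k=\Theta^0_k\gamma^{(1)}_k$ by the amount of mass that $\gamma^{(1)}_k$ places on pairs $(x,y)$ with $|x-y|$ appreciably larger than $\tau$, and then to pull this back to $\gamma_*$ through the optimal map $T_k$, where a Chebyshev-type estimate governed by $\omega_k=\wass_2(\rho_*,\rho_k)$ closes the argument.

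Concretely, I would first use $0\le\theta^0_k\le1$ together with the fact that $\theta^0_k$ vanishes on the closed ball $\ball_{\tau+\sqrt{\omega_k}/2}$: integrating $\Theta^0_k(x,y)=\theta^0_k(x-y)$ against $\gamma^{(1)}_k$ bounds the mass of $\gamma^{(2,0)}_k$ by the $\gamma^{(1)}_k$-measure of the set $\{(x,y)\,;\,|x-y|>\tau+\sqrt{\omega_k}/2\}$. Since $\gamma^{(1)}_k=(T_k\circ X,Y)\#\gamma_*$, this last quantity equals the $\gamma_*$-measure of $\{(x,y)\,;\,|T_k(x)-y|>\tau+\sqrt{\omega_k}/2\}$.

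The one structural input needed is that $\gamma_*$ is concentrated on $\{|x-y|\le\tau\}$; this is immediate from $\anrg_*(\gamma_*)<\infty$, because the transport cost $\cc_\tau(x,y)=\ccc\big(\tfrac{y-x}\tau\big)$ equals $+\infty$ as soon as $|y-x|>\tau$. On that set the triangle inequality gives $|T_k(x)-y|\le|T_k(x)-x|+\tau$, so the ``bad'' set sits inside $\{x\,;\,|T_k(x)-x|>\sqrt{\omega_k}/2\}$. Finally, since $X\#\gamma_*=\rho_*\leb$, Chebyshev's inequality bounds the $\rho_*$-measure of this last set by $\tfrac4{\omega_k}\sint|T_k(x)-x|^2\rho_*(x)\dd x$, which is exactly $4\omega_k$ by the definition \eqref{eq:wass} of $\omega_k$. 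None of these steps is a genuine obstacle; the only points requiring care are reading off the diagonal concentration of $\gamma_*$ from finiteness of the limit functional, and matching the power of $\omega_k$ in the cut-off radius with the denominator that appears in Chebyshev's inequality.
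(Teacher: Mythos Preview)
Your argument is correct and follows exactly the paper's own proof: pull back $\gamma^{(2,0)}_k$ to $\gamma_*$ via $(T_k\circ X,Y)$, use that $\gamma_*$ is supported in $|x-y|\le\tau$ together with the triangle inequality, and finish with a Chebyshev estimate against $\omega_k=\wass_2(\rho_*,\rho_k)$. You even get the measure $\rho_*$ right in the final integral, whereas the paper has a harmless typo writing $\rho_k$ there.
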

\begin{proof}
  Recall that $|x-y|\le\tau$ for $\gamma_*$-a.e.\ $(x,y)$.
  Hence $|T_k(x)-y|\ge\tau+\sqrt{\omega_k}/2$ implies that $|T_k(x)-x|\ge\sqrt{\omega_k}/2$ for $\gamma_*$-a.e.\ $(x,y)$.
  Consequently, recalling that $\gamma^{(2,0)}_k=\Theta_k^0\,(T_k\circ X,Y)\#\gamma_*$:
  \begin{align*}
    \gamma^{(2,0)}_k[\setR^d\times\setR^d]
    &=\dint\theta_k^0(T_k(x)-y)\dd\gamma_*(x,y) \\
    &\le \dint\mathbf{1}_{|T_k(x)-y|\ge\tau+\sqrt{\omega_k}/2}\dd\gamma_*(x,y) \\
    & \le \dint\mathbf{1}_{|T_k(x)-x|\ge\sqrt{\omega_k}/2}\dd\gamma_*(x,y) \\
    & = \int_{\setR^d}\mathbf{1}_{|T_k(x)-x|^2\ge\omega_k/4}\,\rho_k(x)\dd x \\
    & \le \frac4{\omega_k}\int_{\setR^d}|T_k(x)-x|^2\rho_k(x)\dd x
    = 4\omega_k,
  \end{align*}
  where we have used the definition \eqref{eq:wass} of $\omega_k$ in the last step.
\end{proof}
\begin{lem}
  \label{lem:ccconv}
  $\gamma_k$ converges narrowly to $\gamma_*$,
  and moreover, 
  \begin{align}
    \label{eq:ccconv}
    \dint\cc_k\dd\gamma_k\to\dint\cc\dd\gamma_*.
  \end{align}
\end{lem}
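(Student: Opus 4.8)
The plan is to establish the narrow convergence $\gamma_k\to\gamma_*$ first, by following the construction of Steps 1--4 and showing that each modification is asymptotically negligible when tested against a bounded Lipschitz function, and then to deduce the cost limit \eqref{eq:ccconv} from this narrow convergence together with the support bound $\supp\gamma_k\subset\{|x-y|\le\tau\}$ supplied by Lemma \ref{lem:suppmeas}.

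For the narrow convergence, fix a bounded Lipschitz $\varphi$ and estimate $\dint\varphi\dd\gamma_k-\dint\varphi\dd\gamma_*$ by passing through the intermediate measures $\gamma^{(1)}_k$ and $\gamma^{(3)}_k$. The passage $\gamma_*\rightsquigarrow\gamma^{(1)}_k$ is controlled because the coupling $\bigl((T_k(x),y),(x,y)\bigr)\#\gamma_*$ gives $\wass_2(\gamma^{(1)}_k,\gamma_*)^2\le\sint|T_k(x)-x|^2\rho_*(x)\dd x=\omega_k^2\to0$, cf.\ \eqref{eq:wass}; in particular $\dint\varphi\dd\gamma^{(1)}_k\to\dint\varphi\dd\gamma_*$. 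For the passage $\gamma^{(1)}_k\rightsquigarrow\gamma^{(3)}_k$ I would use the decomposition \eqref{eq:twosum}: the pieces $\gamma^{(2,0)}_k$ and $\gamma^{(3,0)}_k$ have total mass at most $4\omega_k$ by Lemma \ref{lem:outsidesmall} (note that $\gamma^{(3,0)}_k$ inherits the mass of $\gamma^{(2,0)}_k$), so their contributions vanish; and since $\gamma^{(3,\beta)}_k$ is the $y$-translate of $\gamma^{(2,\beta)}_k$ by $-\sigma_k\beta$ with $|\sigma_k\beta|=\sigma_k\sqrt d\to0$, one gets $\bigl|\dint\varphi\dd\gamma^{(3,\beta)}_k-\dint\varphi\dd\gamma^{(2,\beta)}_k\bigr|\le\mathrm{Lip}(\varphi)\,\sigma_k\sqrt d$ for each of the $2^d$ labels $\beta$. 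Finally, Step 4 only redistributes $y$-mass within cells $Q$ of diameter $\le\sqrt d\,\delta_k$, cf.\ \eqref{eq:diam}; writing $\dint\varphi\dd\gamma_k=\sum_Q\sint g_k^Q(x)\bigl(|Q|^{-1}\int_Q\varphi(x,y)\dd y\bigr)\dd x$ and comparing cell by cell with $\dint\varphi\dd\gamma^{(3)}_k$ — using $\sint g_k^Q=\gamma^{(3)}_k(\setR^d\times Q)$ and $\sum_Q\gamma^{(3)}_k(\setR^d\times Q)=1$ — yields $\bigl|\dint\varphi\dd\gamma_k-\dint\varphi\dd\gamma^{(3)}_k\bigr|\le2\,\mathrm{Lip}(\varphi)\sqrt d\,\delta_k\to0$. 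Summing the three pieces gives $\dint\varphi\dd\gamma_k\to\dint\varphi\dd\gamma_*$; since bounded Lipschitz functions are convergence-determining for the narrow topology and all measures here are probability measures, $\gamma_k\to\gamma_*$ narrowly.

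For \eqref{eq:ccconv}, I would first remove the singularity of the cost with Lemma \ref{lem:suppmeas}: for $k$ large, $\gamma_k$ is supported in $\{|x-y|\le\tau\}$, where $|\cc_k-\cc_\tau|\le\alpha_k$ by \eqref{eq:ccuniform}, so $\bigl|\dint\cc_k\dd\gamma_k-\dint\cc_\tau\dd\gamma_k\bigr|\le\alpha_k\to0$. As in the proof of Proposition \ref{prp:liminf}, let $\hat\cc$ be the bounded continuous extension of $\cc_\tau|_{\{|x-y|\le\tau\}}$ to $\setR^d\times\setR^d$ obtained by setting $\hat\cc\equiv1$ on $\{|x-y|>\tau\}$; this is continuous because $\ccc\in C(\ball)$ with $\ccc\equiv1$ on $\partial\ball$, and bounded because $\ccc$ is bounded on $\ball$. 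Since $\gamma_k$ is supported in $\{|x-y|\le\tau\}$ we have $\dint\cc_\tau\dd\gamma_k=\dint\hat\cc\dd\gamma_k\to\dint\hat\cc\dd\gamma_*$ by the narrow convergence just established. Finally, the hypothesis $\anrg_*(\gamma_*)<\infty$ forces $\dint\cc_\tau\dd\gamma_*<\infty$, so $|x-y|\le\tau$ holds $\gamma_*$-a.e.\ and hence $\dint\hat\cc\dd\gamma_*=\dint\cc_\tau\dd\gamma_*$. Chaining these relations proves \eqref{eq:ccconv}.

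All the estimates above are elementary; the point requiring attention is the bookkeeping — confirming that the cumulative perturbations (the $y$-translations of size $\sigma_k$ in Step 3a, the cell-rounding of size $\sqrt d\,\delta_k$ in Step 4, and the discarded remainder $\gamma^{(2,0)}_k$ of mass $\le4\omega_k$) are each $o(1)$, and, for the cost, that one is entitled to invoke $\supp\gamma_k\subset\{|x-y|\le\tau\}$. The latter is precisely what the translation in Step 3a was engineered to guarantee, via Lemma \ref{lem:suppmeas}; without it one would have to control the singular growth of $\cc_k$ near and beyond $|x-y|=\tau$ uniformly along the whole sequence, which is exactly the difficulty the construction is designed to circumvent.
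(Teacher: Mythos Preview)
Your proposal is correct and follows essentially the same route as the paper's proof: pass from $\gamma_*$ to $\gamma^{(1)}_k$, then to $\gamma^{(3)}_k$, then to $\gamma_k$, controlling each step against a suitable test function, and finally invoke Lemma~\ref{lem:suppmeas} together with \eqref{eq:ccuniform} to handle the cost. The only stylistic differences are that you test against bounded Lipschitz functions rather than $C^\infty_c$ (both classes being convergence-determining for probability measures), and that for Step~1 you use the cleaner $\wass_2$ coupling bound in place of the paper's convergence-in-measure plus dominated convergence argument; you also make explicit the continuous extension $\hat\cc$ in the cost step, which the paper leaves implicit.
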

\begin{proof}
  To start with, we show that $\gamma^{(1)}_k$ converges to $\gamma_*$ narrowly.
  Since both each $\gamma^{(1)}_k$ and the proposed limit $\gamma_*$ are probability measures,
  it suffices to show convergence in distribution, i.e., for all test functions $\psi\in C^\infty_c(\setR^d\times\setR^d)$.
  Since $\omega_k\to0$ in \eqref{eq:wass}, it follows 
  that $T_k$ converges to the identity map in measure with respect to $\rho_*$, 
  and hence also $(T_k\circ X,Y)$ converges to $(X,Y)$ in measure with respect to $\gamma_*$.
  And ---  $\psi$ being smooth and compactly supported ---
  $\psi(T_k\circ X,Y)$ converges to $\psi$ in measure with respect to $\gamma_*$.
  By the dominated convergence theorem,
  \begin{align*}
    \dint\psi\dd\gamma^{(1)}_k = \dint\psi(T_k\circ X,Y)\dd\gamma_*
    \to\dint\psi\dd\gamma_*.
  \end{align*}
  Next, we show that also $\gamma^{(3)}_k$ converges to $\gamma_*$:
  \begin{align}
    \nonumber
    &\dint\psi(x,y)\dd\gamma^{(3)}_k(x,y)
    =\dint\psi(x,y)\dd\gamma^{(3,0)}_{k}(x,y) + \sum_\beta\dint\psi(x,y)\dd\gamma_{k}^{(3,\beta)}(x,y) \\
    \nonumber
    &=\dint\left[\sint\psi(x,x+z)\lambda(z) \dd z\right]\dd\gamma^{(2,0)}_k(x,y) \\
    \nonumber
    &\qquad + \sum_\beta\dint\psi(x,y-\sigma_k\beta)\dd\gamma^{(2,\beta)}_{k}(x,y) \\
    \label{eq:help010}
    &=\dint\left[\sint\psi(x,x+z)\lambda(z) \dd z-\sum_\beta\psi(x,y-\sigma_k\beta)\vartheta^\beta(x-y)\right]
      \dd\gamma^{(2,0)}_k(x,y)  \\
    \label{eq:help011}
    &\qquad + \sum_\beta\dint\psi(x,y-\sigma_k\beta)\vartheta^\beta(x-y)\dd\gamma_k^{(1)}(x,y) .
  \end{align}
  Here we have used that, by definition of $\gamma^{(2,\beta)}$ from $\gamma^{(1)}$ in Step 2,
  \begin{align*}
    \dn\gamma^{(2,\beta)}_k 
    = \vartheta^\beta(x-y)\big(1-\theta^0_k(x-y)\big)\dd\gamma_k^{(1)}(x,y)
    = \vartheta^\beta(x-y)\dd\gamma^{(1)}_k(x,y) - \vartheta^\beta(x-y)\dd\gamma^{(2,0)}_k(x,y).
  \end{align*}
  The integral in \eqref{eq:help010} converges to zero thanks to Lemma \ref{lem:outsidesmall};
  observe that the expression inside the square brackets is a continuous function
  that is bounded independently of $k$.
  Concerning the sum in \eqref{eq:help011},
  observe that $\psi(x,y-\sigma_k\beta)\to\psi(x,y)$ uniformly in $(x,y)$ since $\psi$ is compactly supported,
  and recall from above that $\gamma^{(1)}_k$ converges to $\gamma_*$ narrowly.
  This suffices to conclude that 
  \begin{align*}
    \dint\psi(x,y)\dd\gamma^{(3)}_k(x,y) \to \sum_\beta\dint\psi(x,y)\vartheta^\beta(x-y)\dd\gamma_*(x,y)
    =\dint\psi(x,y)\dd\gamma_*(x,y),
  \end{align*}
  where we have used that the smooth expressions $\vartheta^\beta(x-y)$ sum up to unity on the support of $\gamma_*$.

  As the last step, we show that $\gamma_k$ converges to $\gamma_*$ as well.
  For each $Q\in\cubes{\delta_k}$, define $\Psi_k^Q\in C_c(\setR^d)$
  by
  \begin{align*}
    \Psi_k^Q(x) = \frac1{|Q|}\int_Q\psi(x,y)\dd y.
  \end{align*}
  Note that there is one common compact set on which all the $\Psi_k^Q$ are supported.
  From the definition of $\gamma_k$, it follows that
  \begin{align*}
    \dint\psi(x,y)\dd\gamma_k(x,y)
    = \sum_{Q\in\cubes{\delta_k}}\int\Psi^Q(x)g_k^Q(x)\dd x 
    = \sum_{Q\in\cubes{\delta_k}}\iint_{\setR^d\times Q}\Psi^Q(x)\dd\gamma^{(3)}_k(x,y) \\
    = \dint\psi(x,y)\dd\gamma^{(3)}_k(x,y) 
    + \sum_{Q\in\cubes{\delta_k}}\iint_{\setR^d\times Q}\big[\Psi^Q(x)-\psi(x,y)\big]\dd\gamma^{(3)}_k(x,y).
  \end{align*}
  Now since the term in square brackets converges uniformly to zero as the mesh is refined,
  and since $\gamma^{(3)}_k$ converges to $\gamma_*$ narrowly,
  distributional --- and subsequently narrow --- convergence of $\gamma_k$ to $\gamma_*$ follows.
  
  Finally, in combination with the fact that --- thanks to Lemma \ref{lem:suppmeas} --- 
  all the $\gamma_k$ are supported inside $|x-y|\le\tau$, 
  where $\cc_k$ converges to $\cc$ uniformly by hypothesis \eqref{eq:ccuniform},
  the claimed convergence \eqref{eq:ccconv} is proven.
\end{proof}
\begin{lem}
  \label{lem:etastrong}
  $Y\#\gamma^{(3)}_k$ has a Lebesgue density $\eta^{(3)}_k\in L^m(\setR^d)$,
  and $\eta^{(3)}_k\to\eta_*$ in $L^m(\setR^d)$.
\end{lem}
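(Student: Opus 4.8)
The plan is to write the Lebesgue density of $Y\#\gamma^{(3)}_k$ explicitly as a mildly perturbed copy of $\eta_*$, and then to pass to the limit in $L^m$. Since $0\le\Theta^\beta_k\le1$, the measure $\gamma^{(2,\beta)}_k=\Theta^\beta_k\gamma^{(1)}_k$ is dominated by $\gamma^{(1)}_k$, so $Y\#\gamma^{(2,\beta)}_k\le Y\#\gamma^{(1)}_k=\eta_*\leb$ is absolutely continuous with a density of the form $w^\beta_k\eta_*$ for some measurable $w^\beta_k$ with $0\le w^\beta_k\le1$; disintegrating $\gamma^{(1)}_k=(T_k\circ X,Y)\#\gamma_*$ along $\eta_*$ and writing $\gamma_*^y$ for the disintegration of $\gamma_*$ with respect to its $Y$-marginal, one obtains the closed form $w^\beta_k(y)=\int\theta^\beta_k(T_k(x)-y)\dd\gamma_*^y(x)$. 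It follows that $\gamma^{(3,\beta)}_k=(X,Y-\sigma_k\beta)\#\gamma^{(2,\beta)}_k$ has $Y$-marginal with Lebesgue density $\eta^{(3,\beta)}_k:=(w^\beta_k\eta_*)(\cdot+\sigma_k\beta)$, which lies in $L^m$ as a translate of something bounded by $\eta_*$; and $Y\#\gamma^{(3,0)}_k=(X\#\gamma^{(2,0)}_k)*\lambda$, so Young's inequality for convolution with a finite measure together with Lemma~\ref{lem:outsidesmall} gives its density $\eta^{(3,0)}_k$ with $\|\eta^{(3,0)}_k\|_{L^m}\le4\omega_k\|\lambda\|_{L^m}\to0$. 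In particular $\eta^{(3)}_k=\eta^{(3,0)}_k+\sum_\beta\eta^{(3,\beta)}_k$ belongs to $L^m(\setR^d)$.

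The step I expect to be the main obstacle is controlling the weights $w^\beta_k$: being produced by a disintegration they carry no a priori modulus of continuity, so the small $y$-translations by $\sigma_k\beta$ cannot be absorbed via translation continuity of $w^\beta_k$ itself. I would get around this by showing that $w^\beta_k\eta_*$ converges in $L^m$ to the \emph{fixed} limit $w^\beta_*\eta_*$, where $w^\beta_*(y):=\int\vartheta^\beta(x-y)\dd\gamma_*^y(x)$. The convergence first holds in $L^1$: using $\theta^\beta_k=\vartheta^\beta(1-\theta^0_k)$ and $0\le\vartheta^\beta\le1$,
\begin{align*}
  \|w^\beta_k\eta_*-w^\beta_*\eta_*\|_{L^1}
  \le \dint\big|\vartheta^\beta(T_k(x)-y)-\vartheta^\beta(x-y)\big|\dd\gamma_*(x,y)
  + \dint\theta^0_k\big(T_k(x)-y\big)\dd\gamma_*(x,y),
\end{align*}
where the second integral is the total mass of $\gamma^{(2,0)}_k$, at most $4\omega_k$ by Lemma~\ref{lem:outsidesmall}, and the first tends to $0$ by dominated convergence: $T_k\to\id$ in $\rho_*$-measure (established in the proof of Lemma~\ref{lem:ccconv}) forces $\vartheta^\beta(T_k(x)-y)\to\vartheta^\beta(x-y)$ in $\gamma_*$-measure, while $\vartheta^\beta$ is bounded. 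To upgrade this to $L^m$, note that $|w^\beta_k\eta_*-w^\beta_*\eta_*|=|w^\beta_k-w^\beta_*|\,\eta_*\le\eta_*$, so the $m$-th power of the difference is dominated by $\eta_*^m\in L^1$; along any subsequence on which the difference converges a.e., dominated convergence yields $L^m$-convergence, which therefore holds along the full sequence since the limit is unique.

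Finally I would assemble the pieces. Because $\sigma_k\to0$, translating the $L^m$-convergent sequence $(w^\beta_k\eta_*)_k$ by $\sigma_k\beta$ preserves its limit, so $\eta^{(3,\beta)}_k=(w^\beta_k\eta_*)(\cdot+\sigma_k\beta)\to w^\beta_*\eta_*$ in $L^m$. Since each $\gamma_*^y$ is a probability measure and $\sum_\beta\vartheta^\beta\equiv1$, we have $\sum_\beta w^\beta_*\equiv1$, hence $\sum_\beta w^\beta_*\eta_*=\eta_*$. Together with $\|\eta^{(3,0)}_k\|_{L^m}\to0$ from the first paragraph this gives $\eta^{(3)}_k=\eta^{(3,0)}_k+\sum_\beta\eta^{(3,\beta)}_k\to\eta_*$ in $L^m(\setR^d)$, which is the assertion.
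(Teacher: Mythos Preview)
Your proof is correct, and it takes a genuinely different route from the paper's. Both arguments decompose $\eta^{(3)}_k$ into the small remainder $\eta^{(3,0)}_k$ and the translated pieces $\eta^{(3,\beta)}_k$, and both identify the same limit pieces (the paper's $\eta_*^\beta$ coincide with your $w^\beta_*\eta_*$). The difference lies in how the convergence $\eta^{(3,\beta)}_k\to w^\beta_*\eta_*$ is obtained. The paper works by duality: it tests against $\psi$ with $\|\psi\|_{L^{q'}}\le1$ for the intermediate exponent $q=\tfrac{2m}{m+1}$, splits the error, and crucially exploits the quantitative gradient bound $\|\nabla\theta^\beta_k\|_{L^\infty}\lesssim\omega_k^{-1/2}$ to balance against the transport error $\omega_k$ via Cauchy--Schwarz; convergence in $L^q$ is then upgraded to $L^m$ by equi-integrability from the envelope $\eta^{(3,\beta)}_k\le\eta_*(\cdot+\sigma_k\beta)$. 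You instead write the $Y$-marginals explicitly through the disintegration weights $w^\beta_k$, prove $w^\beta_k\eta_*\to w^\beta_*\eta_*$ first in $L^1$ by plain dominated convergence (using only continuity of $\vartheta^\beta$ and the mass bound on $\gamma^{(2,0)}_k$), and then upgrade to $L^m$ via the pointwise domination $|w^\beta_k-w^\beta_*|\eta_*\le\eta_*$. Your approach is more elementary in that it never uses the tuned rate $\omega_k^{-1/2}$ on $\nabla\theta^0_k$ and avoids the auxiliary exponent $q$; the paper's approach, on the other hand, keeps explicit track of the $\omega_k$-dependence and would more readily yield quantitative rates.
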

\begin{proof}
  By Step 3b, $Y\#\gamma^{(3,0)}_k=\eta^{(3,0)}_k\leb$ for a smooth density $\eta^{(3,0)}_k\in L^1\cap L^\infty(\setR^d)$.
  Moreover, for each $\beta\in\{-1,+1\}^d$, 
  the marginal $Y\#\gamma^{(3,\beta)}_k$ is a translate of $Y\#\gamma^{(2,\beta)}$,
  and from \eqref{eq:twosum} it follows that
  \begin{align*}
    Y\#\gamma^{(2,0)}_k + \sum_\beta Y\#\gamma^{(2,\beta)}_k = Y\#\gamma^{(1)}_k = \eta_*\leb,
  \end{align*}
  hence $Y\#\gamma^{(3,\beta)}$ has a Lebesgue density 
  \begin{align}
    \label{eq:eta3kappa}
    \eta^{(3,\beta)}_k\le\eta_*(\cdot+\sigma_k\beta)\in L^1\cap L^m(\setR^d).    
  \end{align}
  Define further $\eta_*^\beta$ as the density of $Y\#(\Theta^\beta_k\gamma_*)$;
  this definition is independent of the index $k$,
  since $\gamma_*$ is supported in the region $|x-y|\le\tau$ where $\theta_k^0(x-y)$ vanishes.
  Obviously
  \begin{align}
    \label{eq:etadecomp}
    \eta_* = \sum_{\beta}\eta_*^\beta,
    \quad
    \eta^{(3)}_k = \eta^{(3,0)}_k + \sum_\beta\eta^{(3,\beta)}_k. 
  \end{align}
  In the convergence proof that follows, 
  we use the dual representation of the norm on $L^q(\setR^d)$:
  \begin{align*}
    \|f\|_{L^q} = \sup\left\{\int \psi(x)f(x)\dd x\,;\;\psi\in C_c(\setR^d),\,\|\psi\|_{L^{q'}}\le1\right\},
  \end{align*}
  where $q'=\frac q{q-1}$ is the H\"older conjugate exponent of $q>1$.

  To begin with, observe that $\eta^{(3,0)}_k$ converges to zero in $L^m(\setR^d)$.
  For that, let $\psi\in C(\setR^d)$ with $\|\psi\|_{L^{m'}}\le1$.
  Then, with the help of H\"older's inequality and Lemma \ref{lem:outsidesmall} above,
  \begin{align*}
    \dint\psi(y)\dd\gamma_k^{(3,0)}(x,y)
    &=\dint\left[\sint\psi(x+z)\lambda(z)\dd z\right]\dd\gamma^{(2,0)}_k(x,y) \\
    &\le \dint\|\psi\|_{L^{m'}}\|\lambda\|_{L^m}\dd\gamma^{(2.0)}_k
    \le 4\|\lambda\|_{L^m}\omega_k.
  \end{align*}
  Next, we show that $\eta^{(3,\beta)}_k\to\eta_*^\beta$ in $L^q(\setR^d)$, for each $\beta$,
  where $q:=\frac{2m}{m+1}<m$; note that $q'=2m'$.
  For $\psi\in C(\setR^d)$ with $\|\psi\|_{L^{q'}}\le1$, we have
  \begin{align*}
    &\sint\psi(y)\big[\eta^{(3,\beta)}_k(y)-\eta^{(3,\beta)}_*(y)\big]\dd y
    =\dint\left[\psi(y-\sigma_k\beta)\Theta_k^\beta(T_k(x),y)-\psi(y)\Theta_k^\beta(x,y)\right]\dd\gamma_*(x,y)  \\
    &= \dint[\psi(y-\sigma_k\beta)-\psi(y)]\Theta_k^\beta(x,y)\dd\gamma_*(x,y) \\
    &\qquad +\dint\psi(y-\sigma_k\beta)\big[\theta_k^\beta(T_k(x)-y)-\theta_k^\beta(x-y)\big]\dd\gamma_*(x,y) \\
    &\le \sint[\psi(y-\sigma_k\beta)-\psi(y)]\eta_*^\beta(y)\dd y \\
    &\qquad + \left(\sint|\psi(y-\sigma_k\beta)|^2\eta_*^\beta(y)\dd y\right)^{\frac12}
      \left(\|\nabla\theta_k^\beta\|_{L^\infty}^2\sint|T_k(x)-x|^2\rho_*(x)\dd x\right)^{\frac12} \\
    &\le \dint\psi(y)\big[\eta_*^\beta(y+\sigma_k\beta)-\eta_*^\beta(y)\big]\dd y
      +\|\psi\|_{L^{2m'}}^{\frac12}\|\eta_*\|_{L^m}^{\frac12}\|\nabla\theta_k^\beta\|_{L^\infty}\omega_k \\
    &\le \big\|(\id-\sigma_k\beta)\#\eta_*^\beta-\eta_*^\beta\big\|_{L^q}
      +3\|\eta_*\|_{L^m}^{\frac12}\omega_k^{\frac12}.
  \end{align*} 
  In the last step, we have used 
  that $\nabla\theta^\beta_k=(1-\theta^0_k)\nabla\vartheta^\beta-\vartheta^\beta\nabla\theta^0_k$,
  and hence $\|\nabla\theta^\beta_k\|_{L^\infty}\le4\omega_k^{-1/2}$ by our hypotheses on $\theta^0_k$ and $\vartheta^\beta$,
  at least for all sufficiently large $k$.
  The first term of the final sum above goes to zero, 
  since $\sigma_k\to0$, and the translation semi-group is continuous in $L^q(\setR^d)$;
  the second term goes to zero since $\omega_k\to0$.

  From this, we conclude convergence of $\eta^{(3,\beta)}_k$ to $\eta_*^\beta$ in $L^q(\setR^d)$, 
  and in particular also in measure.
  Further, from the bound \eqref{eq:eta3kappa},
  it follows that $\eta^{(3,\beta)}_k$ is equi-integrable in $L^m(\setR^d)$.
  Hence $\eta^{(3,\beta)}_k\to\eta_*^\beta$ also in $L^m(\setR^d)$.
  In view of \eqref{eq:etadecomp}, this verifies the claim.
\end{proof}
\begin{lem}
  \label{lem:etaconv}
  Define $\eta_k$ by $\eta_k(y) = \sint G_k(x,y)\dd x$, with $G_k$ from \eqref{eq:Gk}.
  Then $\eta_k\in L^m(\setR^d)$, and $\eta_k\to\eta$ in $L^m(\setR^d)$.
  Consequently, $\nrg(Y\#\gamma_k)\to\nrg(Y\#\gamma_*)$.
\end{lem}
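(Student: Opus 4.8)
The plan is to recognize that $\eta_k$ is nothing but the cell-average of the density $\eta^{(3)}_k$ from Lemma \ref{lem:etastrong} --- that is, its conditional expectation with respect to the $\sigma$-algebra generated by the tesselation $\cubes{\delta_k}$ --- and then to combine the strong convergence $\eta^{(3)}_k\to\eta_*$ already established there with the elementary fact that this averaging operator tends to the identity on $L^m(\setR^d)$ as $\delta_k\to0$.

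Concretely, I would first unwind the definition \eqref{eq:Gk}: for $y$ in a cell $Q\in\cubes{\delta_k}$ one has $\eta_k(y)=\sint G_k(x,y)\dd x=|Q|^{-1}\sint g_k^Q(x)\dd x$. Since $\sint g_k^Q(x)\dd x=\gamma_k^Q(\setR^d)=\gamma^{(3)}_k(\setR^d\times Q)=\int_Q\eta^{(3)}_k(y')\dd y'$, this exhibits $\eta_k$ as the function that is constant on each $Q$ with value the mean of $\eta^{(3)}_k$ over $Q$; write $\eta_k=E_k\eta^{(3)}_k$ for the associated averaging operator $E_k$. By Jensen's inequality $E_k$ is an $L^m$-contraction, so membership $\eta^{(3)}_k\in L^m(\setR^d)$ from Lemma \ref{lem:etastrong} immediately gives $\eta_k\in L^m(\setR^d)$, and also $\|\eta_k\|_{L^1}=1$ since $\gamma^{(3)}_k$ is a probability measure.

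For the convergence I would estimate $\|\eta_k-\eta_*\|_{L^m}\le\|E_k(\eta^{(3)}_k-\eta_*)\|_{L^m}+\|E_k\eta_*-\eta_*\|_{L^m}\le\|\eta^{(3)}_k-\eta_*\|_{L^m}+\|E_k\eta_*-\eta_*\|_{L^m}$, using once more that $E_k$ is an $L^m$-contraction. The first term vanishes in the limit by Lemma \ref{lem:etastrong}. For the second I would invoke that the averaging operators converge strongly to the identity on $L^m(\setR^d)$: approximating $\eta_*$ in $L^m(\setR^d)$ by some $g\in C_c(\setR^d)$, the contraction property of $E_k$ controls the error $\eta_*-g$, while $E_kg\to g$ uniformly --- with supports staying in a fixed compact set, hence also in $L^m$ --- because $g$ is uniformly continuous and $\diam(Q)\le\sqrt d\,\delta_k\to0$ by \eqref{eq:diam}; letting $g$ run through a sequence approaching $\eta_*$ then yields $\|E_k\eta_*-\eta_*\|_{L^m}\to0$. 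Thus $\eta_k\to\eta_*$ in $L^m(\setR^d)$.

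The final assertion is then immediate: $\nrg(Y\#\gamma_k)=\|\eta_k\|_{L^m}^m\to\|\eta_*\|_{L^m}^m=\nrg(Y\#\gamma_*)$ by continuity of the $L^m$-norm under $L^m$-convergence. I do not expect any serious obstacle in this lemma; the only points requiring a little care are the identification of $\eta_k$ with $E_k\eta^{(3)}_k$ and, because $\Omega=\setR^d$ is unbounded, the reduction to compactly supported $g$ when proving $E_k\eta_*\to\eta_*$ --- both of which are routine once spelled out.
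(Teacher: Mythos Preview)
Your proposal is correct and follows essentially the same route as the paper: identify $\eta_k$ as the cell-average $\Pi_{\delta_k}[\eta^{(3)}_k]$, use Jensen to see that the averaging operator is an $L^m$-contraction, and then combine Lemma~\ref{lem:etastrong} with the triangle inequality. The only cosmetic difference is in verifying $\Pi_{\delta_k}[\eta_*]\to\eta_*$: you argue via density of $C_c(\setR^d)$ and uniform continuity, whereas the paper bounds $\|\Pi_\delta[f]-f\|_{L^m}^m$ directly by $\int_{\ball}\|f-f(\cdot+\delta_k z)\|_{L^m}^m\dd z$ and invokes continuity of translation in $L^m$.
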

\begin{proof}
  First, we recall two properties of the linear projection operator $\Pi_\delta:L^m(\setR^d)\to L^m(\setR^d)$ 
  given by
  \begin{align*}
    \Pi_\delta[f](y) = \fint_Qf(y')\dd y' \qquad 
    \text{where $Q\in\cubes{\delta}$ is such that $y\in Q$}.
  \end{align*}
  Namely,
  \begin{enumerate}
  \item[(a)] $\|\Pi_\delta[f]-\Pi_\delta[g]\|_{L^m(\setR^d)} \le \|f-g\|_{L^m(\setR^d)}$ for all $f,g\in L^m(\setR^d)$;
  \item[(b)] $\Pi_\delta[f]\to f$ in $L^m(\setR^d)$ for each $f\in L^m(\setR^d)$ as $\delta\searrow0$.
  \end{enumerate}
  Indeed, claim (a) is an easy consequence of Jensen's inequality:
  \begin{align*}
    \big\|\Pi_\delta[f]-\Pi_\delta[g]\big\|_{L^m(\setR^d)}^m 
    = \sum_{Q\in\cubes{\delta_k}}\big\|\Pi_\delta[f]-\Pi_\delta[g]\big\|_{L^m(Q)}^m
    = \sum_{Q\in\cubes{\delta_k}}\int_Q\left|\fint_Q\big[f(y')-g(y')\big]\dn y'\right|^m\dd y    \\
    \le \sum_{Q\in\cubes{\delta_k}}\int_Q\left[\fint_Q\big|f(y')-g(y')\big|^m\dn y'\right]\dd y    
    = \sum_{Q\in\cubes{\delta_k}}\|f-g\|_{L^m(Q)}^m
    =\|f-g\|_{L^m(\setR^d)}^m.    
  \end{align*}
  Concerning claim (b),
  we use that thanks to hypothesis \eqref{eq:diam}, 
  arbitrary $y'\in Q$ lie in a ball of radius $\delta_k$ around any given $y\in Q$
  \begin{align*}
    \big\|\Pi_\delta[f]-f\big\|_{L^m(\setR^d)}^m
    = \sum_{Q\in\cubes{\delta_k}}\big\|\Pi_\delta[f]-f\big\|_{L^m(Q)}^m
    = \sum_{Q\in\cubes{\delta_k}}\int_Q\left|\fint_Q\big[f(y')-f(y)\big]\dd y'\right|^m\dd y    \\
    \le \sum_{Q\in\cubes{\delta_k}}\int_Q\left[\fint_Q\big|f(y')-f(y)\big|^m\dn y'\right]\dd y    
    \le \int_{\ball}\big\|f-f(\cdot+\delta_k z)\big\|_{L^m(\setR^d)}^m\dd z.
  \end{align*}
  The norm inside the final integral goes to zero as $\delta_k\to0$,
  since $f(\cdot+\delta_kz)\to f$ in $L^m(\setR^d)$, uniformly with respect to $z\in\ball$.

  To connect this auxiliary result to the claim of the Lemma,
  recall that $\eta^{(3)}_k\to\eta_*$ in $L^m(\setR^d)$ by Lemma \ref{lem:etastrong} above,
  and observe that $\eta_k=\Pi_{\delta_k}[\eta^{(3)}_k]$.
  Therefore,
  \begin{align*}
    \|\eta_k-\eta_*\|_{L^m} 
    \le \|\Pi_{\delta_k}[\eta^{(3)}_k]-\Pi_{\delta_k}[\eta_*]\|_{L^m} + \|\Pi_{\delta_k}[\eta_*]-\eta_*\|_{L^m}
    \le \|\eta^{(3)}_k-\eta_*\|_{L^m} + \|\Pi_{\delta_k}[\eta_*]-\eta_*\|_{L^m}
  \end{align*}
  tends to zero.
\end{proof}

%

\subsection{Existence and convergence of minimizers}
\begin{lem}
  \label{prp:existence}
  For each $k$ large enough, 
  $\anrg_k$ has a (unique if $\eps_k>0$) minimizer $\hat\gamma_k\in\gspc_{\delta_k}(\rho_k)$.
\end{lem}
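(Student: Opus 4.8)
The plan is to use the direct method of the calculus of variations. Fix $k$ large enough that Lemma \ref{lem:anrgbelow} applies, so that $\anrg_k$ is non-negative on $\gspc_{\delta_k}(\rho_k)$ and the second-moment bound \eqref{eq:m2g} holds. First I would observe that $\anrg_k$ is not identically $+\infty$ on $\gspc_{\delta_k}(\rho_k)$: the product measure $\rho_k\leb\otimes\sigma$, for any smooth compactly supported probability density $\sigma\in\prb_{\delta_k}(\setR^d)$ obtained by averaging a mollifier over the cells, lies in $\gspc_{\delta_k}(\rho_k)$ and has finite entropy, finite $\nrg(Y\#\gamma)$, and --- since one may in fact just invoke the recovery sequence constructed in Proposition \ref{prp:limsup} for any admissible $\gamma_*$ --- finite value of $\anrg_k$. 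Hence $\inf_{\gspc_{\delta_k}(\rho_k)}\anrg_k=:m_k<\infty$.

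Next I would take a minimizing sequence $(\gamma_k^{(j)})_{j}$ in $\gspc_{\delta_k}(\rho_k)$ with $\anrg_k(\gamma_k^{(j)})\to m_k$. By \eqref{eq:m2g}, the second moments of the $\gamma_k^{(j)}$ are bounded uniformly in $j$, so the sequence is tight, and by Prokhorov's theorem a subsequence converges narrowly to some $\hat\gamma_k\in\prb(\setR^d\times\setR^d)$. The constraint $X\#\gamma=\rho_k\leb$ passes to the narrow limit since $X$ is continuous, so $X\#\hat\gamma_k=\rho_k\leb$. To see $Y\#\hat\gamma_k\in\prb_{\delta_k}(\setR^d)$, note that narrow convergence of $\gamma_k^{(j)}$ forces narrow convergence of $Y\#\gamma_k^{(j)}$, and the finite-entropy bound (via $\eps_k\ent$ in $\anrg_k$, or directly via $\nrg(Y\#\gamma)$ and superlinearity) gives that the densities $Y\#\gamma_k^{(j)}$ are equi-integrable, so the limit is absolutely continuous; since being piecewise constant on the cells of $\cubes{\delta_k}$ is preserved under (weak-$L^1$, hence narrow) limits of densities, $Y\#\hat\gamma_k\in\prb_{\delta_k}(\setR^d)$. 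Thus $\hat\gamma_k\in\gspc_{\delta_k}(\rho_k)$.

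It remains to show $\anrg_k(\hat\gamma_k)\le\liminf_j\anrg_k(\gamma_k^{(j)})=m_k$, i.e.\ lower semicontinuity of each term under narrow convergence. The transport term $\iint\cc_k\dd\gamma$ is l.s.c.\ because $\cc_k\ge\hat\cc-\alpha_k$ with $\hat\cc$ continuous and bounded below (the same truncation argument as in Proposition \ref{prp:liminf}), and more directly because $\cc_k$ is non-negative and l.s.c.; the entropy $\ent$ is narrowly l.s.c.\ as a convex functional with l.s.c.\ integrand $s\mapsto s\log s$ extended by $+\infty$ to non-absolutely-continuous measures (this is standard; note the negative part of $s\log s$ is controlled by the uniform second-moment bound exactly as in Lemma \ref{lem:rlogr}, so no mass escapes to make $\ent$ jump down); and $\frac1\tau\nrg(Y\#\gamma)$ is l.s.c.\ because $Y\#\gamma_k^{(j)}\to Y\#\gamma$ narrowly and $h(r)=r^m$ is convex with superlinear growth, so $\nrg$ is narrowly l.s.c.\ by the same argument already used in the proof of Proposition \ref{prp:liminf}. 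The two indicator terms are l.s.c.\ since their constraint sets are narrowly closed, as just verified. Summing, $\anrg_k$ is narrowly l.s.c.\ on $\gspc_{\delta_k}(\rho_k)$, so $\hat\gamma_k$ is a minimizer. Finally, for $\eps_k>0$ uniqueness follows because $\ent$ is strictly convex on its domain and the remaining terms are convex, so $\anrg_k$ is strictly convex, hence has at most one minimizer. I expect the main obstacle to be the careful handling of the lower semicontinuity of the entropy term together with the verification that narrow limits stay inside $\gspc_{\delta_k}(\rho_k)$, in particular that the $Y$-marginal remains a density that is piecewise constant on $\cubes{\delta_k}$ — this is where the finite-entropy (equivalently, finite-$\nrg$) bound on the minimizing sequence is essential to prevent loss of absolute continuity in the limit.
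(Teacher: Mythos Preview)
Your proposal is correct and follows essentially the same route as the paper: invoke Lemma~\ref{lem:anrgbelow} for a lower bound and second-moment control, apply the direct method via tightness and narrow lower semicontinuity of each term, and conclude uniqueness from the strict convexity of $\ent$ when $\eps_k>0$. The paper's own proof is considerably more terse---it simply asserts that $\anrg_k$ is a sum of convex, narrowly l.s.c.\ functionals with relatively compact sublevels---whereas you spell out the verification that the constraint set $\gspc_{\delta_k}(\rho_k)$ is closed under narrow limits (using the $\nrg$-bound to retain absolute continuity of the $Y$-marginal) and that the infimum is finite; these details are left implicit in the paper but your treatment of them is sound.
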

\begin{proof}
  We use the estimates from Lemma \ref{lem:anrgbelow}:
  thanks to \eqref{eq:anrgbelow}, the $\anrg_k$ are bounded below for all sufficiently small $k$.
  And thanks to \eqref{eq:m2g}, the $\gamma$'s in the sublevels of $\anrg_k$ have uniformly bounded second moment,
  hence are relatively compact with respect to narrow convergence.
  Moreover, it is easily seen that $\anrg_k$ is the sum of three convex (in the sense of convex combinations of measures) functionals,
  and thus is lower semi-continuous with respect to narrow convergence.
  Moreover, $\ent$ is a strictly convex functional on $\gspc(\rho_k)$, 
  so $\anrg_k$ is strictly convex if $\eps_k>0$.
  This together allows to invoke the direct methods from the calculus of variations
  and conclude the existence of a minimizer,
  which is unique if $\eps_k>0$.
\end{proof}
\begin{lem}
  Let $\hat\gamma_k\in\gspc_{\delta_k}(\rho_k)$ be minimizers of the respective $\anrg_k$.
  Then a subsequence of $(\hat\gamma_k)$ converges in $\wass_2$ 
  to a minimizer of $\anrg(\cdot|\rho_*)$.
\end{lem}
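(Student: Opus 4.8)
\noindent
The plan is to run the direct method of the calculus of variations on the sequence $(\anrg_k)$, using the a priori estimates of Lemma~\ref{lem:anrgbelow} for compactness and the two $\Gamma$-convergence inequalities from Propositions~\ref{prp:liminf} and~\ref{prp:limsup} to identify the limit.

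\textbf{Step 1: a uniform bound on the minimal energies.} First I would exhibit a single competitor of finite limit energy. The diagonal plan $\gamma_\circ:=(\id,\id)\#\rho_*$ satisfies $X\#\gamma_\circ=Y\#\gamma_\circ=\rho_*$ and $\cc(x,x)=\ccc(0)=0$, so $\anrg_*(\gamma_\circ)=\tfrac1\tau\nrg(\rho_*)$, which is finite because $\nrg$ is narrowly lower semicontinuous and $\sup_k\nrg(\rho_k)<\infty$. Applying Proposition~\ref{prp:limsup} to $\gamma_\circ$ produces $\gamma_k^\circ\in\gspc_{\delta_k}(\rho_k)$ with $\limsup_k\anrg_k(\gamma_k^\circ)\le\anrg_*(\gamma_\circ)<\infty$. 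Since $\hat\gamma_k$ minimizes $\anrg_k$ over a set containing $\gamma_k^\circ$, it follows that $\sup_k\anrg_k(\hat\gamma_k)<\infty$, after discarding finitely many indices.

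\textbf{Step 2: compactness and upgrade to $\wass_2$.} By \eqref{eq:m2g} the second moments of the $\hat\gamma_k$ are then $k$-uniformly bounded, so by Prokhorov a (non-relabeled) subsequence converges narrowly to some $\hat\gamma$; continuity of $X$ gives $X\#\hat\gamma=\lim X\#\hat\gamma_k=\rho_*\leb$, hence $\hat\gamma\in\gspc(\rho_*)$. To turn narrow convergence into $\wass_2$-convergence I only need convergence of the second moments. The $x$-part is immediate, $\dint|x|^2\dd\hat\gamma_k=\sint|x|^2\rho_k\dd x\to\sint|x|^2\rho_*\dd x$ from $\rho_k\to\rho_*$ in $\wass_2$. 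For the $y$-part I would combine $\sup_k\dint\cc_k\dd\hat\gamma_k<\infty$ (from \eqref{eq:anrgbelow}) with \eqref{eq:ccbelow} to get $\iint_{|y-x|>\tau}\big(|y-x|-\tau\big)^2\dd\hat\gamma_k\le\alpha_k\dint\cc_k\dd\hat\gamma_k\to0$; since $|y|^2\le 2|x|^2+4\tau^2+4\big(|y-x|-\tau\big)^2\mathbf 1_{|y-x|>\tau}$ and $|x|^2$ is uniformly integrable under $\rho_k$ (equivalent to $\wass_2$-convergence of $\rho_k$), this makes $|y|^2$ uniformly integrable under $\hat\gamma_k$, so Vitali's theorem gives $\dint|y|^2\dd\hat\gamma_k\to\dint|y|^2\dd\hat\gamma$. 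Hence $\hat\gamma_k\to\hat\gamma$ in $\wass_2$.

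\textbf{Step 3: $\hat\gamma$ is a minimizer.} By Proposition~\ref{prp:liminf}, $\anrg_*(\hat\gamma)\le\liminf_k\anrg_k(\hat\gamma_k)$. For an arbitrary $\gamma\in\gspc(\rho_*)$ with $\anrg_*(\gamma)<\infty$, Proposition~\ref{prp:limsup} provides $\gamma_k\in\gspc_{\delta_k}(\rho_k)$ with $\limsup_k\anrg_k(\gamma_k)\le\anrg_*(\gamma)$, and minimality of $\hat\gamma_k$ yields $\liminf_k\anrg_k(\hat\gamma_k)\le\limsup_k\anrg_k(\gamma_k)\le\anrg_*(\gamma)$. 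Combining these and treating the case $\anrg_*(\gamma)=\infty$ trivially, $\anrg_*(\hat\gamma)\le\anrg_*(\gamma)$ for every $\gamma\in\gspc(\rho_*)$, so $\hat\gamma$ minimizes $\anrg_*$. The step I expect to require the most care is the passage from narrow to $\wass_2$-convergence in Step~2: this is exactly where the quantitative lower bound \eqref{eq:ccbelow} on the approximate costs outside the light cone $|y-x|\le\tau$ is essential, to exclude escape of mass or of second moment along near-lightspeed transport; the remainder is a routine combination of the direct method with the liminf and limsup inequalities.
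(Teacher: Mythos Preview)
Your proof is correct and follows essentially the same route as the paper: use the recovery sequence for the diagonal plan $(\id,\id)\#\rho_*$ to bound $\anrg_k(\hat\gamma_k)$, extract a narrowly convergent subsequence via \eqref{eq:m2g}, invoke the liminf/limsup inequalities for minimality, and use \eqref{eq:ccbelow} together with the $\wass_2$-convergence of $\rho_k$ to upgrade narrow to $\wass_2$-convergence. The only cosmetic difference is in that last step: the paper decomposes $|x|^2+|y|^2=2|x|^2+|y-x|^2+2x\cdot(y-x)$ and treats each piece on $\{|y-x|\lessgtr2\tau\}$ separately, whereas you bound $|y|^2$ by a uniformly integrable function and appeal to Vitali-type reasoning---both arguments hinge on exactly the same ingredients.
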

\begin{proof}
  We begin by showing that the second momenta of the $\hat\gamma_k$ are $k$-uniformly bounded.
  In view of estimate \eqref{eq:m2g},
  it suffices to show that $\anrg_k(\hat\gamma_k)$ is $k$-uniformly bounded.
  But this is a consequence of $\Gamma$-convergence:
  since $\anrg(\cdot|\rho_*)$ is not identically $+\infty$ 
  --- for instance, $\anrg((X,X)\#\rho_*\leb|\rho_*)=\nrg(\rho_*)<\infty$ ---
  there is a recovery sequence $\gamma_k$ such that $\anrg_k(\gamma_k)$ is bounded,
  and hence also $\anrg_k(\hat\gamma_k)$ is bounded.

  Consequently, there is a subsequence that converges narrowly to a limit $\hat\gamma_*$.
  Since $X\#\gamma_k=\rho_k\leb\to\rho_*\leb$ narrowly by hypothesis, 
  and since the projection $X$ is continuous, it follows that $\gamma^*\in\gspc(\rho_*)$.
  Thus, by the fundamental properties of $\Gamma$-convergence, $\gamma_*$ is a minimizer of $\anrg(\cdot|\rho_*)$.

  It remains to be shown that actually $\hat\gamma_k\to\hat\gamma_*$ in $\wass_2$.
  It suffices to verify that $\hat\gamma_k$'s second moment converges to that of $\hat\gamma_*$.
  The second moment of $\hat\gamma_k$ amounts to
  \begin{align}
    \label{eq:m2help}
    \dint\big(|x|^2+|y|^2\big)\dd\hat\gamma_k
    = 2\dint|x|^2\dd\hat\gamma_k + \dint|y-x|^2\dd\hat\gamma_k + 2\dint x\cdot(y-x)\dd\hat\gamma_k.
  \end{align}
  Thanks to Lemma \ref{lem:rlogr},
  \begin{align*}
    \dint|x|^2\dd\gamma_k = \sint|x|^2\rho_k(x)\dd x \to \sint|x|^2\rho_*(x)\dd x = \dint|x|^2\dd\gamma_*.
  \end{align*}
  Further, recalling the lower bound \eqref{eq:ccbelow} on $\cc_k$ and estimate \eqref{eq:anrgbelow},
  we obtain for all sufficiently large $k$ that
  \begin{align*}
    \iint_{|y-x|\ge2\tau}|y-x|^2\dd\hat\gamma_k
    \le4\iint_{|y-x|\ge2\tau}\big(|y-x|-\tau\big)^2\dd\hat\gamma_k
    \le4\alpha_k\dint\cc_k\dd\hat\gamma_k
    \le\frac{8\alpha_k}\tau\anrg_k(\hat\gamma_k),
  \end{align*}
  which converges to zero as $k\to\infty$ since $\anrg_k(\hat\gamma_k)$ is bounded.
  In the same spirit, also
  \begin{align*}
    \left|\iint_{|y-x|\ge2\tau} x\cdot(y-x)\dd\hat\gamma_k\right|
    \le \frac{\sqrt{\alpha_k}}2\dint|x|^2\dd\hat\gamma_k + \frac1{2\sqrt{\alpha_k}}\iint_{|y-x|\ge2\tau}|y-x|^2\dd\hat\gamma_k
  \end{align*}
  converges to zero.
  The continuous function $|y-x|^2$ is bounded on the set where $|y-x|\le2\tau$,
  so narrow convergence $\hat\gamma_k\to\hat\gamma_*$ implies 
  \begin{align*}
    \iint_{|y-x|\le2\tau}|y-x|^2\dd\hat\gamma_k\to \dint|y-x|^2\dd\hat\gamma_*.
  \end{align*}
  Finally, for $|y-x|\le2\tau$, the function $x\cdot(y-x)$ is bounded in modulus by $2\tau|x|$.
  Since the $\hat\gamma_k$ have $k$-uniformly bounded second momenta,
  Prokhorov's theorem yields
  \begin{align*}
    \iint_{|y-x|\le2\tau} x\cdot(y-x)\dd\hat\gamma_k \to \dint x\cdot(y-x)\dd\hat\gamma_*.
  \end{align*}
  In summary, we can pass to the limit $k\to\infty$ in each term on the right-hand side of \eqref{eq:m2help},
  obtaining the second moment of $\hat\gamma_*$.
\end{proof}

\section{Numerical scheme}

\subsection{Formulation of the minimization problem}
Throughout this section, we assume that the following are fixed:
a bounded domain $\Omega\subset\setR^d$,
a tesselation $\cubes{\delta}$ of $\Omega$ with cells of diameter at most $\delta>0$, see \eqref{eq:diam},
an entropic regularization parameter $\eps>0$,
a time step $\tau>0$,
and an approximation $\tilde\cc:=\cc_{\tau,\delta}$ of the distance cost function $\cc$,
which is such that $\tilde\cc$ is constant (possibly $+\infty$) on each $Q\times Q'$ where $Q,Q'\in\cubes{\delta}$,
and such that $\tilde\cc(x,y)<\infty$ at each $(x,y)$ with $|x-y|\le\tau$.
We assume that the elements $Q_i$ of $\cubes{\delta}$ are enumerated with an index $i\in I$,
where $I$ is a finite index set,
and for each $i\in I$, a point $x_i\in Q_i$ is given.

We need to fix some further notations:
indexed quanties $u=(u_i)_{i\in I}$ are considered as (column) vectors,
quantities $g=(g_{i,j})_{i,j\in I}$ with double index as matrices.
Below, we use $\odot$ to denote the entry-wise products of vectors and matrices,
$[u\odot v]_j=u_jv_j$ and $[g\odot h]_{i,j}=g_{i,j}h_{i,j}$, respectively.
In the same spirit, $\frac uv$ and $\frac gh$ denote entry-wise division.
Further, for a vector $u$, we denote by $\diag u$ the diagonal matrix
with the vector $u$ on the diagonal $[\diag u]_{i,j}=u_i \delta_{i,j}$ where $\delta_{i,j}$ denotes the Kronecker delta.
For the sake of disambiguation,
the usual matrix-vector product is written as $g\cdot u$,
i.e., $[g\cdot u]_i = \sum_jg_{i,j}u_j$,
and $u\otimes v$ denotes outer product of the vectors $u$ and $v$,
that is $[u\otimes v]_{i,j}=u_iv_j$.
\begin{rmk}
  With the $x_i$ at hand,
  a practical choice for $\tilde\cc$ that conforms with \eqref{eq:ccuniform} and \eqref{eq:ccbelow} is the following:
  \begin{align}
    \label{eq:discretec}
    \tilde\cc(x,y) = \tilde\cc_{i,j} :=\ccc\left(\frac{|x_i-y_j|}{\tau+\delta}\right) \text{for all $x\in Q_i$, $y\in Q_j$},
  \end{align}
  and extend $\tilde\cc$ by lower semi-continuity to all of $\setR^d\times\setR^d$.
  The modified denominator $\tau+\delta$ has been chosen such that 
  $\tilde\cc$ is finite on each $2d$-cube $Q_i\times Q_j$ that intersects the region $|x-y|\le\tau$.  
\end{rmk}
A density $\rho\in\dprb(\Omega)$ is the conveniently identified with the vector $r=(r_i)$,
where $r_i$ is the constant density on $Q_i$.
Now, if $\rho\in\dprb(\Omega)$, 
and if $\gamma=G\leb\otimes\leb$ is a minimizer of $\anrg_{\eps,\delta,\tilde\cc}(\cdot|\rho)$ on $\gspc_\delta(\rho)$, 
then $G$ is constant on each $2d$-cube $Q_i\times Q_j$;
this follows by Jensen's inequality and strict convexity of $\ent$.
Accordingly, the set of all possible minimizers $\gamma$ can be parametrized by matrices $g$,
where $g_{i,j}$ is the constant value of $\gamma$'s density on $Q_i\times Q_j$.

For notational simplicity, introduce the vector $\eins$ with $[\eins]_j=|Q_j|$ for all $j$,
so that
\begin{align*}
  [\eins^T\cdot g]_j = \sum_i|Q_i|g_{i,j}, \quad [g\cdot\eins]_i = \sum_j|Q_j|g_{i,j}.
\end{align*}
In this notation, the constraint $X\#\gamma=\rho\leb$ then becomes $g\cdot\eins=r$, 
and we have
\begin{align*}
  \ent(\gamma) =\sum_{i,j}|Q_i||Q_j|\big[ g_{i,j}\log g_{i,j}-g_{i,j}\big], \quad 
  \nrg(Y\#\gamma) = \sum_j\left[|Q_j|h\left(\sum_i|Q_i|g_{i,j}\right)\right].
\end{align*}
In terms of the notations introduced above,
the variational problem \eqref{eq:jko} turns into
\begin{equation}
  \label{eq:JKO}
  \begin{split}
    g^{n}
    =\argmin_{g=(g_{i,j})} \Bigg(
    &
    \eps\sum_{i,j}\left[|Q_i||Q_j|\left(\frac\tau\eps\tilde\cc_{i,j}+\log g_{i,j}\right)g_{i,j}-g_{i,j}\right] \\
    &+\sum_j\left[|Q_j|h\left(\sum_i|Q_i|g_{i,j}\right)\right]
    + \devil{(\bar r^{(n-1)}-g\eins)}
    \Bigg),
  \end{split}
\end{equation}
where $\bar r^{n-1} = g^{n-1}\cdot\eins$ encodes the datum from the previous step.

\subsection{Excursion: Dykstra's algorithm}
In this section, we briefly summarize the concept of the generalized Dykstra algorithm
that is the basis for the efficient numerical approximation of Wasserstein gradient flows 
in the spirit of \cite{Pey}.

Let $F:X\to\setR$ be a convex differentiable function defined on a Hilbert space $X$,
and let $F^*$ be its Legendre dual.
Below, we identify at each $x\in X$ the differentials $F'(x),(F^*)'(x)\in X'$ 
by their respective Riesz duals in $X$. 
The \emph{Bregman divergence} $D_F(x,y)$ of $x\in X$ relative to $y \in X$ is defined by
\begin{align}
  \label{eq:bregman}
  D_F(x|y) = F(x)-F(y)-\langle F'(y),x-y\rangle.
\end{align}
By convexity, $D_F(x|y)\ge0$.
Further, let $\phi_1,\phi_2:X\to\setRinf$ be two proper, convex and lower semi-continuous functionals on $X$,
and consider, for a given $y\in X$, the variational problem 
\begin{align}
  \label{eq:primal}
  D_F(x|y) + \phi_1(x) + \phi_2(x)\longrightarrow\min.
\end{align}
In this setting, the generalized Dykstra algorithm 
for approximation of a minimizer $x^*\in X$ is the following.
Let $x^{(0)}:=y$ and $q^{(0)}:=q^{(-1)}:=0$, and define for $k=0,1,2,\ldots$ inductively:
\begin{equation}
  \label{eq:dykstra}
  \begin{split}  
    x^{(k+1)} &:= \argmin_{x\in X}\big[D_F\big(x\big|(F^*)'(F'(x^{(k)})+q^{(k-1)})\big)+\phi_{[k]}(x)\big], \\
    q^{(k+1)} &:= F'(x^{(k)})+q^{(k-1)}-F'(x^{(k+1)}),
  \end{split}  
\end{equation}
where $[k]=1$ if $k$ is even, and $[k]=2$ if $k$ is odd.
In the special case that $F(x)=\frac12\langle x,x\rangle$ and $\phi_1,\phi_2$ are the indicator functions
of two convex sets with non-empty intersection, 
then \eqref{eq:dykstra} reduces to the original Dykstra projection algorithm.

Under certain hypotheses (for instance, if $X$ is finite-dimensional),
it can be proven that $x^{(k)}$ converges to a minimizer $x^*$ of \eqref{eq:primal} in $X$.
The core idea of the convergence proof is to study the dual problem for \eqref{eq:primal},
for which the iteration \eqref{eq:dykstra} attains a considerably easier form.
We refer to \cite{Pey,CDPS,CPSV} for further discussion of the algorithm,
including questions of well-posedness and convergence, 
in the context of fully discrete approximation of gradient flows.

\subsection{From the minimization problem to the iteration}\label{sec:Numeric_Iteration}
In this section, we follow once again closely \cite{Pey}
with the goal is to rewrite \eqref{eq:JKO} in the form \eqref{eq:primal}, 
and then to apply the algorithm \eqref{eq:dykstra} to its solution.
The Hilbert space is that of matrices $g=(g_{i,j})_{i,j\in I}$ endowed with the scalar product
\begin{align*}
  \langle g,g' \rangle = \sum_{i,j}|Q_i||Q_j|g_{i,j}g'_{i,j},
\end{align*}
and we shall choose $F$ in \eqref{eq:bregman} as
\begin{align*}
  F(g) = \sum_{i,j}|Q_i||Q_j|g_{i,j}\log g_{i,j},
\end{align*}
with the convention that $0\log 0=0$ and $r\log r=+\infty$ for any $r<0$,
which has Legendre dual
\begin{align*}
  F^*(\omega) = \sum_{i,j}|Q_i||Q_j|e^{\omega_{i,j}},
\end{align*}
and respective derivatives 
--- recall that we identify the functional $F'(g)$ with its Riesz dual ---
\begin{align*}
  \big[F'(g)\big]_{i,j} = \log g_{i,j},
  \quad
  \big[(F^*)'(\omega)\big]_{i,j} = \exp\omega_{i,j}.
\end{align*}
The corresponding Bregman distance is the Kullback-Leibler divergence,
\begin{align*}
  \KL(g|\omega) := D_F(g|\omega) 
  = \sum_{i,j}|Q_i||Q_j|\big[g_{i,j}(\log g_{i,j}-\log\omega_{i,j}) - g_{i,j} + \omega_{i,j}\big],
\end{align*}
which is defined for matrices $g$ and $\omega$ with non-negative entries.
The correct interpretation of the logarithmic terms is the following:
if $\omega_{i,j}=0$, then the entire term in square brackets is $+\infty$ 
unless $g_{i,j}=0$ as well, in which case this term is zero.

Next, we rewrite our minimization problem \eqref{eq:JKO} in the form \eqref{eq:primal}.
As the reference density $\xi=(\xi_{i,j})$ for the divergence, we choose 
\begin{align*}
  \xi_{i,j} = 
  \begin{cases}
    \exp\left(-\frac\tau\eps\tilde\cc_{i,j}\right) & \text{if $\tilde\cc_{i,j}$ is finite}, \\
    0 & \text{if $\tilde\cc_{i,j}=+\infty$}.
  \end{cases}
\end{align*}
Thus $\tau\tilde\cc_{i,j}g_{i,j}=-\eps g_{i,j}\log\xi_{i,j}$, with the convention that $0\log0=0$, 
but $(-a)\log(-a)=+\infty$ and $-a\log 0=+\infty$ for any $a>0$.
The sum of the first two terms in the variational functional \eqref{eq:JKO} takes the convenient form
\begin{align*}
  \sum_{i,j}\left[|Q_i||Q_j|\left(\frac\tau\eps\tilde\cc_{i,j}+\log g_{i,j}-1\right)g_{i,j}\right]
  &= \sum_{i,j}|Q_i||Q_j|g_{i,j}\big(\log g_{i,j}-\log\xi_{i,j}-1\big) \\
  &= \KL(g|\xi)-\sum_{i,j}|Q_i||Q_j|\xi_{i,j}.
\end{align*}
Recall that $\KL(g|\xi)\ge0$ by construction,
and that $\KL(g|\xi)=+\infty$ unless $g_{i,j}=0$ for all $(i,j)$ with $\tilde\cc_{i,j}=+\infty$.
Neglecting irrelevant factors and constants,
the minimization problem \eqref{eq:JKO} attains the form
\begin{align}
  \label{eq:JKO2}
  g^{n} = \argmin_g \big[\eps\KL(g|\xi) + \phi_1(\eins^T\cdot g) + \phi_2^{n}(g\cdot\eins)\big],
\end{align}
where
\begin{align*}
  \phi_1(s) = \nrg_\delta(s) = \sum_j|Q_j|h(s_j), 
  \quad
  \phi_2^{n}(r) = \devil{(\bar r^{n-1}-r)} =
  \begin{cases}
    0 & \text{if $r=\bar r^{(n-1)}$}, \\
    +\infty & \text{otherwise}.
  \end{cases}.
\end{align*}
Using that for our choice of $F$, 
\begin{align*}
  \left[(F^*)'\big(F'(x)+q\big)\right]_{i,j} 
  = \exp\big(\log x_{i,j}+q_{i,j}\big) 
  = (x\odot s)_{i,j}, \quad \text{with $s_{i,j}:=e^{q_{i,j}}$},
\end{align*}
Dykstra's algorithm \eqref{eq:dykstra} translates into the following:
from $g^{(0)}=\xi$ and $s^{(0)}=s^{(-1)}\equiv1$, define inductively
\begin{equation}
  \label{eq:dykstra2}
  g^{(k+1)} = \Phi_{[k]}(g^{(k)}\odot s^{(k-1)}), \quad s^{(k+1)} = \frac{g^{(k)}\odot s^{(k-1)}}{g^{(k+1)}},
\end{equation}
again with $[k]=1$ for even $k$, and $[k]=2$ for odd $k$,
where $\Phi_1(\omega)$ and $\Phi_2(\omega)$ are, respectively, the solutions
to the minimization problems
\begin{align}
  \label{eq:auxprob}
  \eps\KL(g|\omega) +\phi_1(\eins^T\cdot g) \to\min 
  \quad
  \text{and}
  \quad
  \eps\KL(g|\omega)+\phi_2^n(g\cdot \eins) \to\min\;.
 \end{align}
These minimization problems can be solved almost explicitly.
Their respective Euler-Lagrange equations are, at each $(i,j)$ with $\omega_{i,j}>0$,
\begin{align*}
  0 = \eps\log\frac{g_{i,j}}{\omega_{i,j}} + h'\left(\sum_i|Q_i|g_{i,j}\right),
  \quad
  \text{and}
  \quad
  0 = \eps\log\frac{g_{i,j}}{\omega_{i,j}} + \lambda_i,
\end{align*}
where the $\lambda_i$ are Lagrange multipliers to realize the constraint $g\cdot\eins=\bar r^{n-1}$.
After dividing these equations by $\eps$, taking the exponential, 
and evaluation of the marginals,
one obtains in a straight-forward way the following representation of the minimizers in \eqref{eq:auxprob}:
\begin{align*}
  \Phi_1(\omega) = \omega\cdot\diag{\frac{H_\eps^{-1}(\eins^T\cdot\omega)}{\eins^T\cdot\omega}},
  \quad
  \text{and}
  \quad
  \Phi_2(\omega) = \diag{\frac{\bar r^{n-1}}{\omega\cdot\eins}}\cdot\omega,
\end{align*}
where $[H_\eps^{-1}(\eta)]_j$ for given $\eta_j\ge0$ is the solution $z$ to the nonlinear relation
\begin{align*}
  H_\eps(z) = z\exp\left(\frac{h'(z)}\eps\right) = \eta_j;
\end{align*}
note that the equations for the components of $H_\eps^{-1}(\eta)$ are decoupled.

Finally, a significant reduction in the computational complexity of the algorithm 
is achieved by taking advantage of the Dyadic structure of $g$ and $s$ 
that is inherited from each iteration to the next:
at each stage $k$, there are vectors $\alpha^{(k)}$, $\beta^{(k)}$ and $u^{(k)}$, $v^{(k)}$ 
such that
\begin{align}
  \label{eq:dyadic}
  g^{(k)} = \big(\alpha^{(k)}\otimes\beta^{(k)}\big)\odot\xi,
  \quad
  s^{(k)} = u^{(k)}\otimes v^{(k)}.
\end{align}
Inserting this special form into \eqref{eq:dykstra2}, 
one obtains iteration rules for $\alpha^{(k)}$, $\beta^{(k)}$ and $u^{(k)}$, $v^{(k)}$,
that are summarized below.
\begin{prp}
  \label{prp:dykstra}
  Initialize 
  $\alpha_i^{(0)}=\beta^{(0)}_j=1$ and $u^{(0)}_i=u^{(-1)}_i=v^{(0)}_j=v^{(-1)}_j=1$ for all $i,j$,
  and calculate inductively $\alpha^{(k)}$, $\beta^{(k)}$ and $u^{(k)}$, $v^{(k)}$ for $k=1,2,\ldots$ 
  from
  \begin{align*}
    \alpha^{(k+1)} =
    \begin{cases}
      \alpha^{(k)}\odot u^{(k-1)} & \text{if $k$ odd}, \\
      \frac{\bar r^{n-1}}{\xi\cdot(\beta^{(k+1)}\odot\eins)} & \text{if $k$ even},
    \end{cases}
                                                   &\quad
                                                   \beta^{(k+1)} =
                                                   \begin{cases}
                                                     \frac{H_\eps^{-1}\big((\xi^T\cdot(\alpha^{(k+1)}\odot\eins))\odot \beta^{(k)}\odot v^{(k-1)}\big)}
                                                     {\xi^T\cdot(\alpha^{(k+1)}\odot\eins)} 
                                                     &\text{if $k$ odd},\\
                                                     \beta^{(k)}\odot v^{(k-1)}
                                                     &\text{if $k$ even},
                                                   \end{cases}
    \\
    u^{(k+1)} = \frac{\alpha^{(k)}\odot u^{(k-1)}}{\alpha^{(k+1)}},
                                    &\quad
                                      v^{(k+1)} = \frac{\beta^{(k)}\odot v^{(k-1)}}{\beta^{(k+1)}},
    %
  \end{align*}
  with the understanding that for odd $k$, 
  one calculates $\alpha^{(k+1)}$ first and $\beta^{(k+1)}$ next,
  and the other way around for even $k$.
  Further, the quotient $\frac00$ is interpreted as $0$.

  Then \eqref{eq:dyadic} produces the iterates $g^{(k)}$ and $s^{(k)}$ of \eqref{eq:dykstra2}. 
  %
\end{prp}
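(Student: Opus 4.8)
The plan is to verify the dyadic ansatz \eqref{eq:dyadic} directly, by induction on $k$, using the explicit formulas for $\Phi_1$ and $\Phi_2$ obtained above together with two elementary algebraic observations: the entrywise product of two rank-one matrices, each possibly multiplied by the fixed reference matrix $\xi$, is again a rank-one matrix times $\xi$; and left- (resp.\ right-) multiplication by a diagonal matrix rescales rows (resp.\ columns), hence modifies only the left (resp.\ right) factor of a dyadic product. Since $H_\eps^{-1}$ acts componentwise, it too respects this factorization.

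For the base case $k=0$, the initializations $g^{(0)}=\xi$ and $s^{(0)}=s^{(-1)}=1$ in \eqref{eq:dykstra2} are exactly \eqref{eq:dyadic} with $\alpha^{(0)}=\beta^{(0)}=u^{(0)}=v^{(0)}=u^{(-1)}=v^{(-1)}=1$, which is the stated initialization. For the inductive step, assume \eqref{eq:dyadic} at index $k$ together with $s^{(k-1)}=u^{(k-1)}\otimes v^{(k-1)}$. Then the working matrix entering \eqref{eq:dykstra2} is
\[
  \omega := g^{(k)}\odot s^{(k-1)}
  = \big((\alpha^{(k)}\odot u^{(k-1)})\otimes(\beta^{(k)}\odot v^{(k-1)})\big)\odot\xi,
\]
which is again of the required form; it remains to apply $\Phi_{[k]}$ and the subsequent ratio, in the two cases.

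In the sub-step enforcing the hard $X$-marginal constraint, $g^{(k+1)}=\Phi_2(\omega)=\diag{\bar r^{n-1}/(\omega\cdot\eins)}\cdot\omega$; computing $(\omega\cdot\eins)_i = (\alpha^{(k)}\odot u^{(k-1)})_i\,[\xi\cdot((\beta^{(k)}\odot v^{(k-1)})\odot\eins)]_i$, one reads off $\beta^{(k+1)}=\beta^{(k)}\odot v^{(k-1)}$ and then $\alpha^{(k+1)}=\bar r^{n-1}/\big(\xi\cdot(\beta^{(k+1)}\odot\eins)\big)$, while $s^{(k+1)}=\omega/g^{(k+1)}$ has entries $(\omega\cdot\eins)_i/\bar r^{n-1}_i$ independent of $j$, yielding $u^{(k+1)}=(\alpha^{(k)}\odot u^{(k-1)})/\alpha^{(k+1)}$ and $v^{(k+1)}=1$. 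In the sub-step enforcing the $Y$-marginal (energy) condition, $g^{(k+1)}=\Phi_1(\omega)=\omega\cdot\diag{H_\eps^{-1}(\eins^T\cdot\omega)/(\eins^T\cdot\omega)}$; here the left factor is merely carried over, $\alpha^{(k+1)}=\alpha^{(k)}\odot u^{(k-1)}$, and inserting $\eins^T\cdot\omega=(\beta^{(k)}\odot v^{(k-1)})\odot(\xi^T\cdot(\alpha^{(k+1)}\odot\eins))$ into the formula for $\Phi_1$ produces precisely the asserted expression for $\beta^{(k+1)}$, after which $s^{(k+1)}=\omega/g^{(k+1)}$ is constant in $i$ and reproduces $u^{(k+1)}=1$ and $v^{(k+1)}=(\beta^{(k)}\odot v^{(k-1)})/\beta^{(k+1)}$. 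Throughout, entries $(i,j)$ with $\xi_{i,j}=0$ (i.e.\ $\tilde\cc_{i,j}=+\infty$) carry a vanishing entry of $g^{(k)}$ at every stage, so the divisions are well-defined under the convention $\frac00=0$ without affecting the relevant entries.

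The argument involves only bookkeeping, so there is no deep obstruction; the step requiring care is the alternation itself — identifying, for each $k$, which of the two sub-minimizations is active and the order in which $\alpha^{(k+1)}$ and $\beta^{(k+1)}$ must be formed (in the $\Phi_1$-step the update of $\beta^{(k+1)}$ already uses the freshly computed $\alpha^{(k+1)}$, and symmetrically in the $\Phi_2$-step), together with the handling of the degenerate entries noted above. Once the inductive step is carried out in both cases, \eqref{eq:dyadic} holds for all $k$ and the displayed recursion for $(\alpha^{(k)},\beta^{(k)},u^{(k)},v^{(k)})$ is exactly the one asserted.
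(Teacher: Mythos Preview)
Your proof is correct and follows essentially the same route as the paper's: an induction on $k$, first rewriting the working matrix $\omega=g^{(k)}\odot s^{(k-1)}$ in dyadic form, and then checking in each of the two cases that $\Phi_{[k]}(\omega)$ and the ratio $\omega/g^{(k+1)}$ retain the dyadic structure with the asserted updated factors. Your additional remark that in each sub-step one of $u^{(k+1)}$, $v^{(k+1)}$ collapses to the all-ones vector is a correct consequence of the recursion that the paper does not spell out.
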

\begin{proof}
  We assume that $g^{(\ell)}=(\alpha^{(\ell)}\otimes\beta^{(\ell)})\odot\xi$ and $s^{(\ell)}=u^{(\ell)}\otimes v^{(\ell)}$ 
  are in the form \eqref{eq:dyadic} for all $\ell=0,1,2,\ldots,k$;
  we show that with $\alpha^{(k+1)}$, $\beta^{(k+1)}$ and $u^{(k+1)}$, $v^{(k+1)}$ defined as above,
  $g^{(k+1)}=(\alpha^{(k+1)}\otimes\beta^{(k+1)})\odot\xi$ and $s^{(k+1)}=u^{(k+1)}\otimes v^{(k+1)}$ 
  satisfy the original induction formula \eqref{eq:dykstra2}.

  First, note that
  \begin{align*}
    g^{(k)}\odot s^{(k-1)} 
    = (\alpha^{(k)}\otimes\beta^{(k)})\odot\xi\odot(u^{(k-1)}\otimes v^{(k-1)})
    = \big((\alpha^{(k)}\odot u^{(k-1)})\otimes(\beta^{(k)}\odot v^{(k-1)})\big)\odot\xi.
  \end{align*}
  Further, we shall use the rule that for arbitrary vectors $p$, $q$ and $x$, and matrices $h$,
  \begin{align*}
    [(p\otimes q)\odot h]\cdot x = p\odot[h\cdot(q\odot x)].
  \end{align*}
  Now, if $k$ is odd, then
  \begin{align*}
    &\alpha^{(k+1)}\otimes\beta^{(k+1)}
      = \frac{\bar r^{n-1}}{\xi\cdot(\beta^{(k+1)}\odot\eins)}\otimes\beta^{(k+1)} \\
    & = \left(\frac{r^{n-1}}{\alpha^{(k)}\odot u^{(k-1)}\odot(\xi\cdot(\beta^{(k)}\odot v^{(k-1)}\odot\eins))}
    \odot\alpha^{(k)}\odot v^{(k-1)}\right)\otimes(\beta^{(k)}\odot v^{(k-1)}) \\
    & = \diag{\frac{r^{n-1}}{(g^{(k)}\odot s^{(k-1)})\cdot\eins}}\cdot\frac{g^{(k)}\odot s^{(k-1)} }\xi 
      = \frac{\Phi_1(g^{(k)}\odot s^{(k-1)})}{\xi} = \frac{g^{(k+1)}}\xi.
  \end{align*}
  In the same spirit, for $k$ even, one shows that
  \begin{align*}
    \alpha^{(k+1)}\otimes\beta^{(k+1)}
    &= (\alpha^{(k)}\odot u^{(k-1)})\otimes \left(\beta^{(k)}\odot v^{(k-1)}\odot
    \frac{H_\eps^{-1}\big((\xi^T\cdot(\alpha^{(k+1)}\odot\eins))\odot \beta^{(k)}\odot v^{(k-1)}\big)}
    {(\xi^\cdot(\alpha^{(k+1)}\odot\eins))\odot \beta^{(k)}\odot v^{(k-1)}}\right) \\
    &= \frac{\Phi_2(g^{(k)}\odot s^{(k-1)})}{\xi} = \frac{g^{(k+1)}}\xi.    
  \end{align*}
  Finally,
  \begin{align*}
    u^{(k+1)}\otimes v^{(k+1)} 
    &= \frac{\alpha^{(k)}\odot u^{(k-1)}}{\alpha^{(k+1)}}\otimes  \frac{\beta^{(k)}\odot v^{(k-1)}}{\beta^{(k+1)}} \\
    &= \frac{(\alpha^{(k)}\otimes\beta^{(k)})\odot\xi\odot(u^{(k-1)}\otimes v^{(k-1)}}{(\alpha^{(k+1)}\otimes\beta^{(k+1)})\odot\xi}
    = \frac{g^{(k)}\odot s^{(k-1)}}{g^{(k+1)}} 
    = s^{(k+1)}.
  \end{align*}
\end{proof}

\subsection{Implementation}
Based on the discussion above, we introduce a numerical scheme 
for approximate solution of the initial value problem for \eqref{eq:eq} as follows.
Choose a spatial mesh width $\delta>0$ and an entropic regularization parameter $\eps>0$.
Further, define a suitable approximation $\tilde\cc$ of the cost function $\cc$ that is constant on cubes $Q_i\times Q_j$, 
for instance as in \eqref{eq:discretec},
and an approximation $r^0$ of the initial condition, 
for instance $r^0_i=\fint_{Q_i}\rho^0(x)\dd x$.

From a given $r^{n-1}$, the next iterate $r^n$ is obtained as second marginal, $r^n_j=\sum_i|Q_i|g^n_{i,j}$,
of the minimizer $g^n$ to the variational problem \eqref{eq:JKO} or, equivalently, \eqref{eq:JKO2}.
To calculate $g^n$ from $r^{n-1}$, we use Dykstra's algorithm \eqref{eq:dykstra2} 
as shown in Proposition \ref{prp:dykstra} above.
That is, we calculate alternatingly the scaling factors $\alpha^{(k)}$, $\beta^{(k)}$, and the auxiliary vectors $u^{(k)}$ and $v^{(k)}$,
using the iteration from Proposition \ref{prp:dykstra} with $\bar r:=r^{n-1}$.
The updates of $\alpha^{(k+1)}$, $u^{(k+1)}$ and $v^{(k+1)}$ are obviously very efficient.
To calculate the term involving $H_\eps^{-1}$ in the update for $\beta^{(k+1)}$,
we use a Newton iteration, which converges in few steps. 
The iteration in $k$ is repeated until the changes in $\alpha$ and $\beta$
from one iteration to the next meets a smallness condition.
Then $g^n_{i,j}:=\alpha^{(k)}_i\xi_{i,j}\beta^{(k)}_j$.

\subsection{Numerical experiments}
In our expriments, we study the application of our discretization method to the equation
\begin{align*}
  \partial_t\rho = \nabla\cdot\left[\rho\,\frac{\nabla\rho}{\sqrt{1+|\nabla\rho|^2}}\right],
\end{align*}
which is \eqref{eq:eq} with the relativistic cost $\ccc(v)=1-\sqrt{1-|v|^2}$ 
and the energy from \eqref{eq:nrg} with $h(r)=r^2/2$.
Naturally, all experiments are carried out on finite domains $\Omega$,
which are either of dimension $d=1$ or $d=2$.

\subsubsection{Finite speed of propagation}
\begin{figure}
  \label{fig:Speed}
  \subfigure[The initial probability density $\rho^{(0)}$.]{\includegraphics[width=0.19\textwidth]{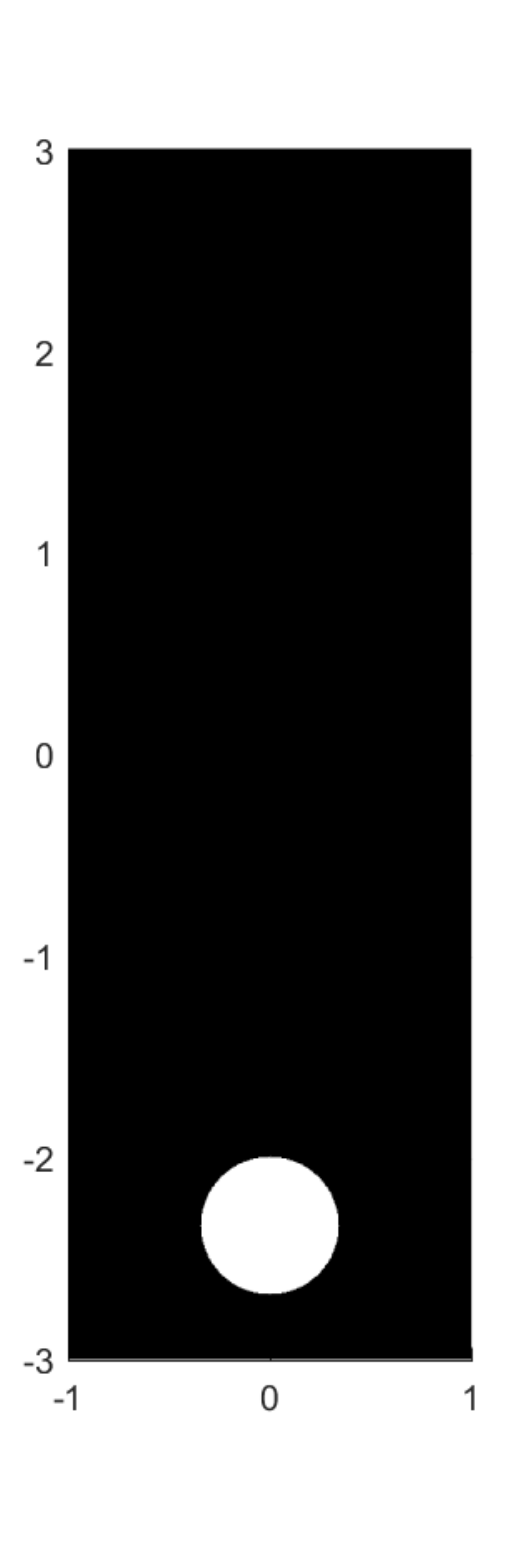}}
  \hfill
  \subfigure[The density after the first iteration $\rho^{(1)}$.]{\includegraphics[width=0.19\textwidth]{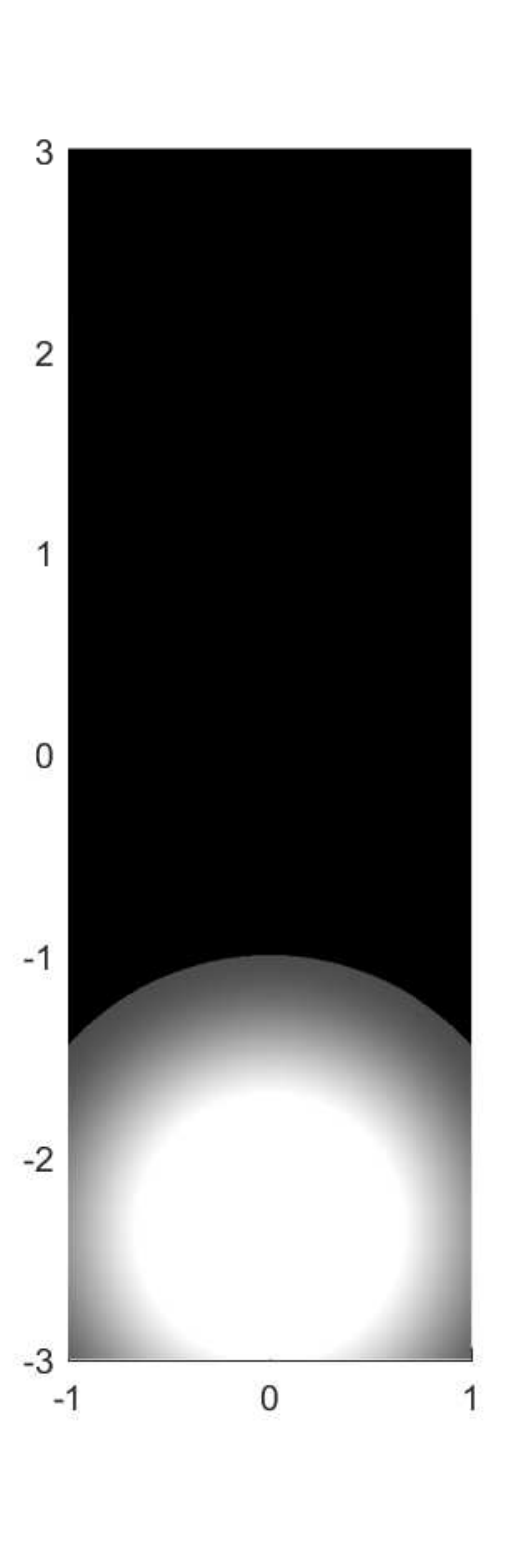}}
  \hfill
  \subfigure[The density after the second iteration $\rho^{(2)}$.]{\includegraphics[width=0.19\textwidth]{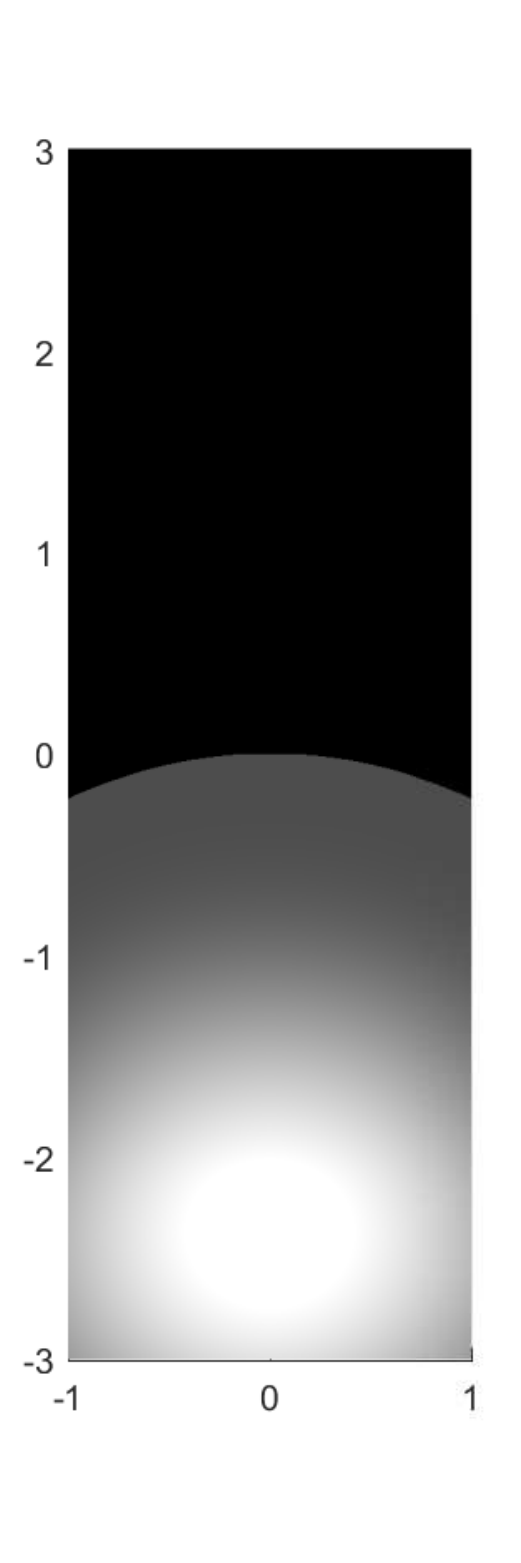}}
  \hfill
  \subfigure[The density after the third iteration $\rho^{(3)}$.]{\includegraphics[width=0.19\textwidth]{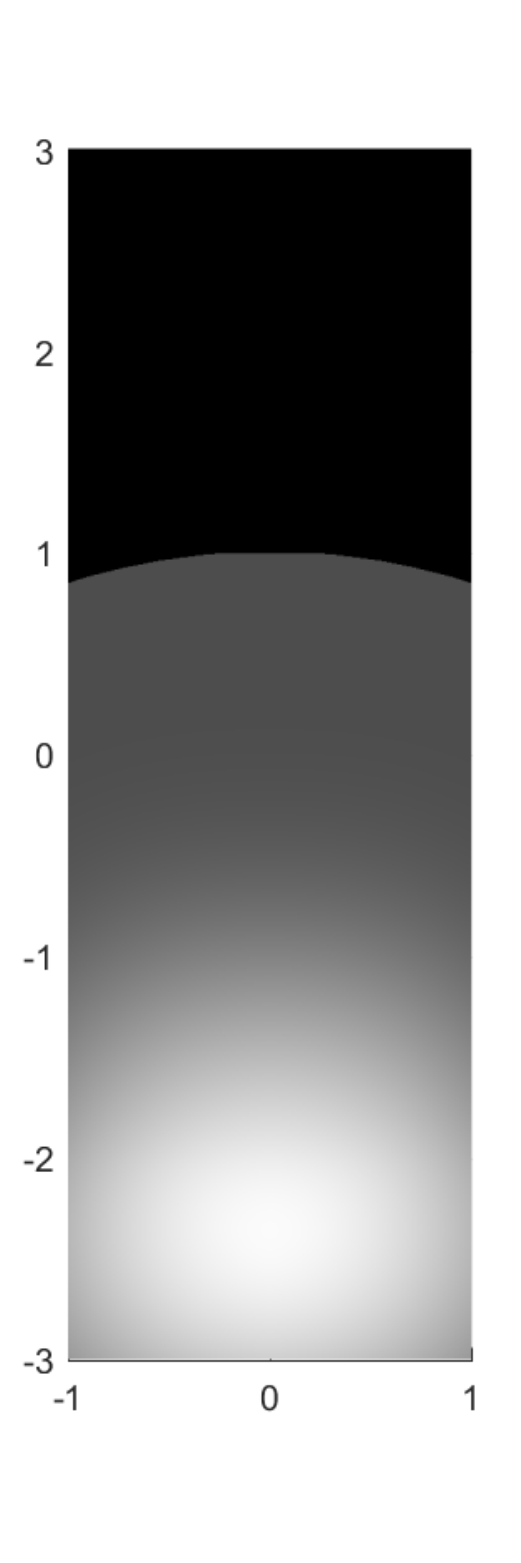}}
 \hfill
 \subfigure[The density after the fourth iteration $\rho^{(4)}$.]{\includegraphics[width=0.19\textwidth]{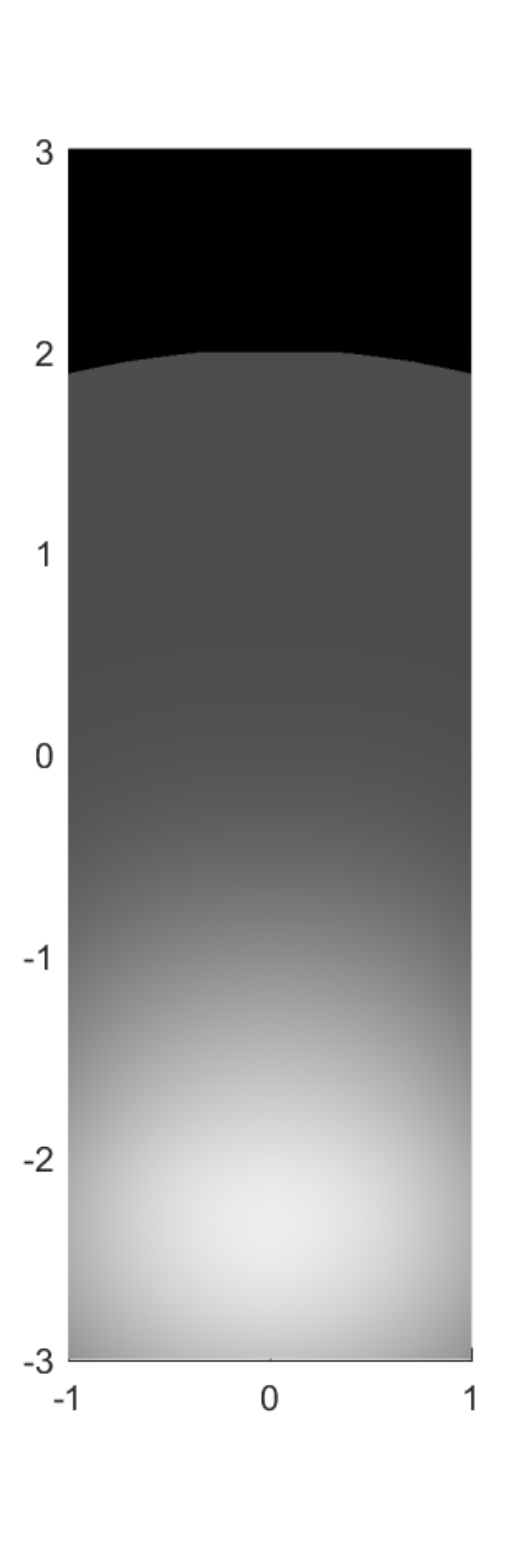}}
 \vspace*{-3mm}
 \caption{A support of the density propagates at most with ``light speed''. 
   The greyscale possesses a step from black (representing density 0) 
   to the darkest displayed gray (representing the smallest double-precision floating-point number greater than 0) 
   in order to illustrate the support of $\rho$ moving with finite speed. 
   The Iteration was performed on a grid of $400\times 1200$ uniformly distributed gridpoints on $[-1,1]\times [-3,3]\subset\setR^2$ 
   with parameters $\tau=1$, $\eps=0.5$, $m=2$ and lightspeed $1$. 
   As initial distribution we used $\rho^{(0)}$ with its mass equally distributed on its support, a ball with radius $0.8$ centered at $(0,-2.8)$.  This way, the uppermost points in the support of $\rho^{(0)}$ have ordinate $y=-2$ and the propagation with lightspeed can be observed over the displayed plots.}
\end{figure}
In the first experiment, we study how the flux limitation becomes manifest numerically.
We consider the rectangular box $\Omega=(-1,1)\times(-3,3)$ in $\setR^2$,
and a discretization by squares of edge length $0.005$.
Our time step is $\tau=1$.
The chosen discrete approximation $\tilde\cc$ to the cost function $\cc$ is of the type \eqref{eq:discretec},
so in particular we set $\xi_{i,j}=0$ if $|x_i-x_j|>1$.
We chose a (discontinuous) initial condition $\rho^{(0)}$ that is a uniformly distributed on a ball. 

Figure \ref{fig:Speed} shows (from left to right) the initial density, and then the solution at $t=\tau, 2\tau, 3\tau$ and $t=4\tau$. 
In order to make the finite speed of propagation of the support visible, 
we set the grayscale to black for $\rho(x)=0$, and to a gray visibly lighter than black as soon as $\rho(x)>0$. Additionally, the support of the initial density is chosen as a ball, positioned at $(0,-2.8)$ and with radius $0.8$. This way, $\rho^{(0)}$ is supported in $y\leq -2$ and the propagation of the support with lightspeed can easily be observed as the support increases its radius by $1$ in each step. 



\subsubsection{Motion around obstacles}
\begin{figure}
  \label{fig:Obstacle}
  \subfigure[The initial probability density $\rho^{0}$.]{\includegraphics[width=0.19\textwidth]{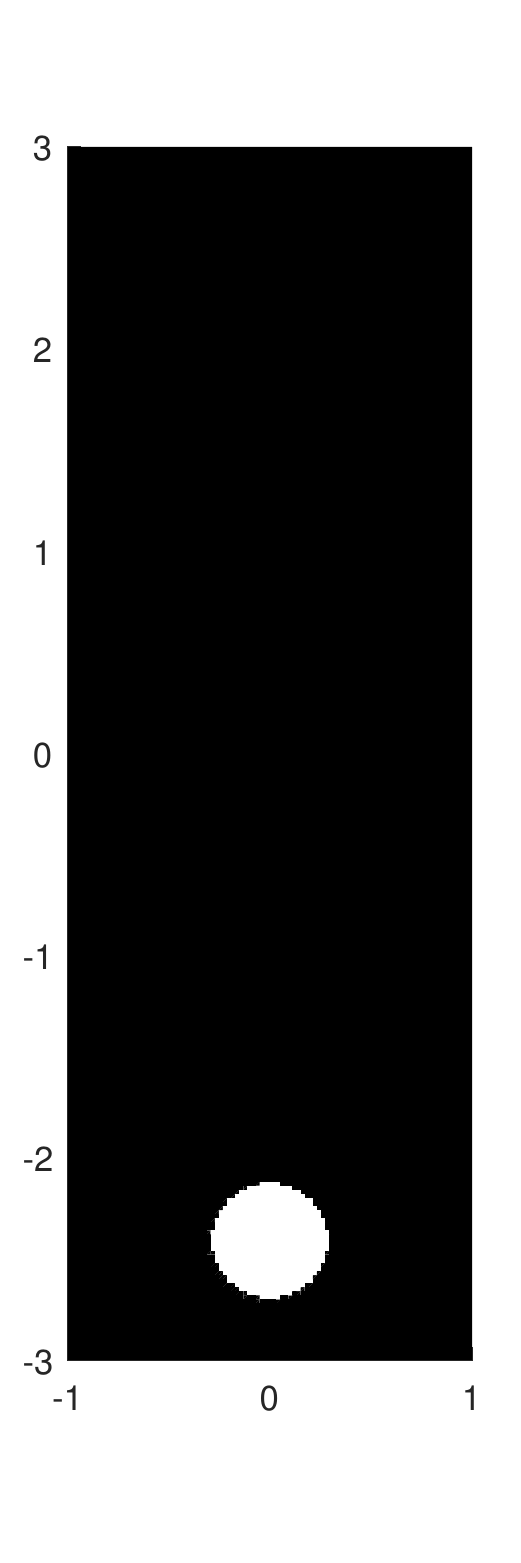}}
  \hfill
  \subfigure[The density after two iterations $\rho^{2}$.]{\includegraphics[width=0.19\textwidth]{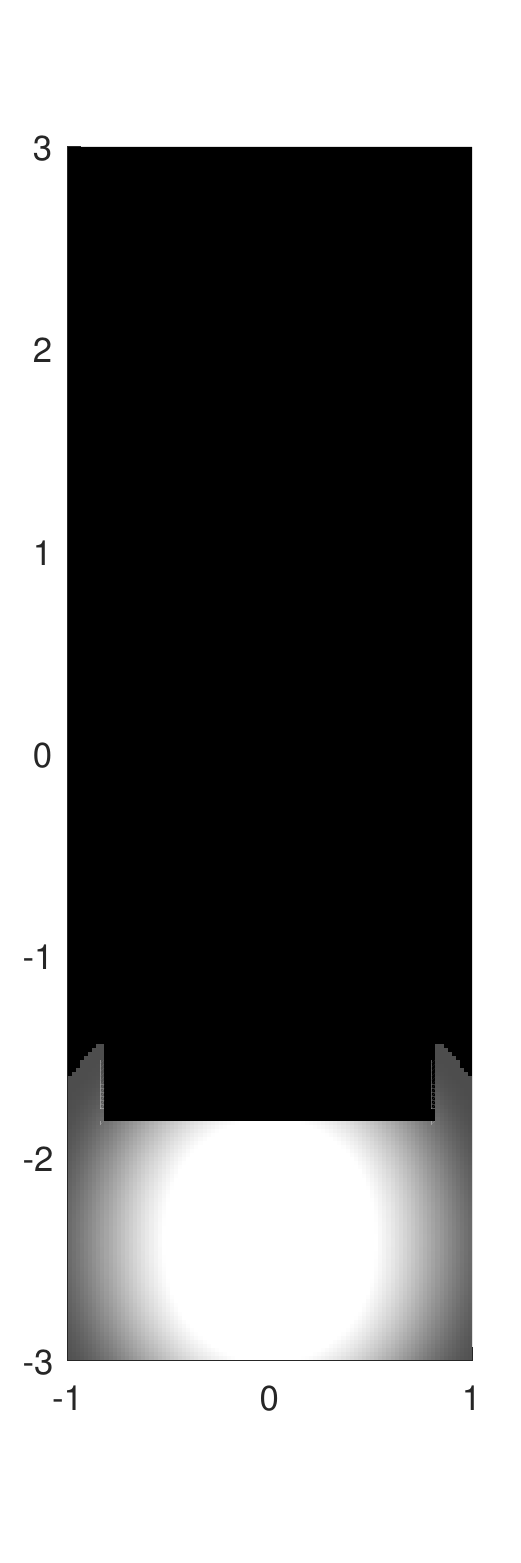}}
  \hfill
  \subfigure[The density after four iterations $\rho^{4}$.]{\includegraphics[width=0.19\textwidth]{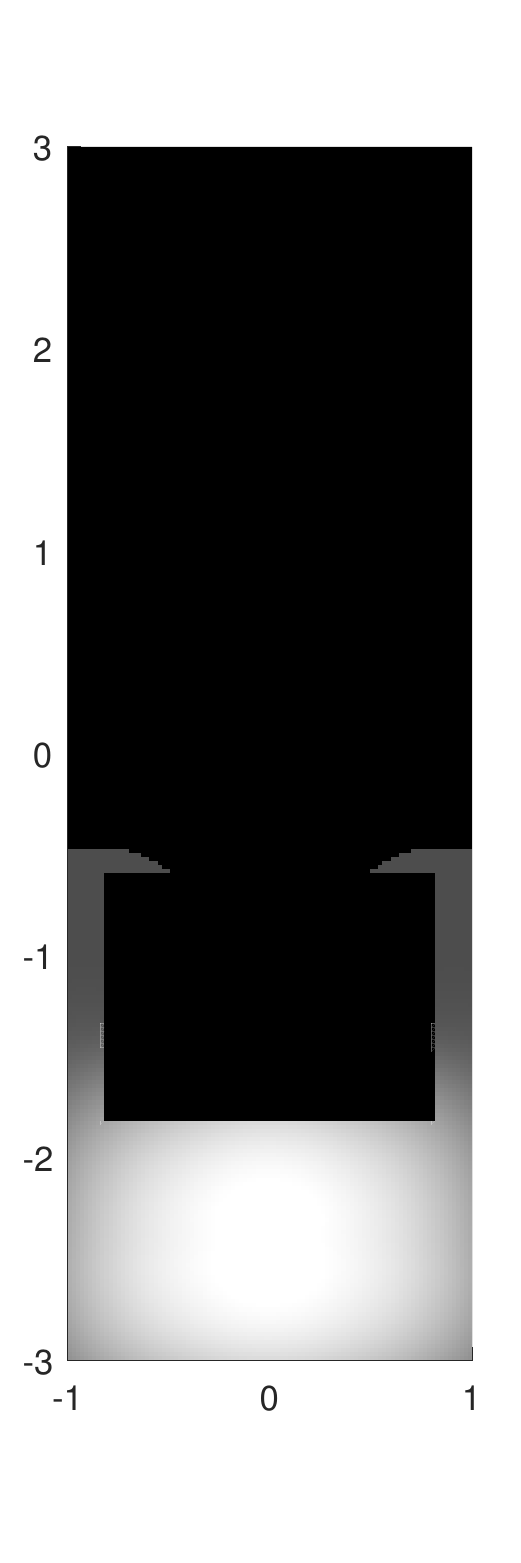}}
  \hfill
  \subfigure[The density after five iterations $\rho^{5}$.]{\includegraphics[width=0.19\textwidth]{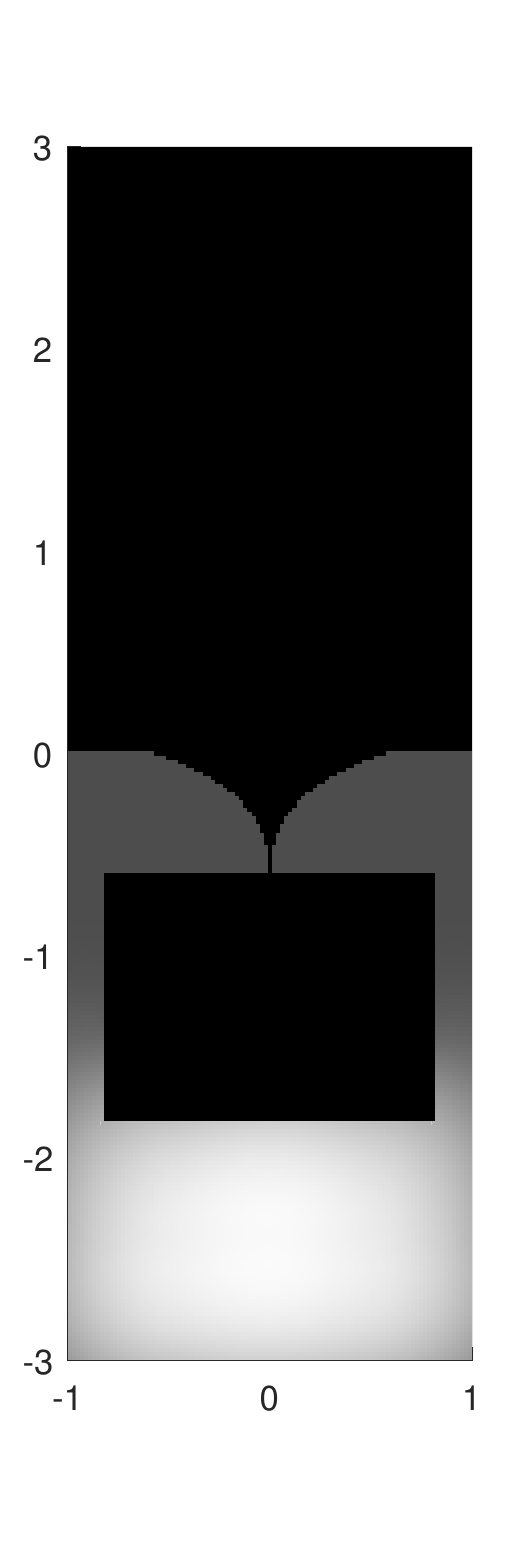}}
  \hfill
  \subfigure[The density after six iterations $\rho^{6}$.]{\includegraphics[width=0.19\textwidth]{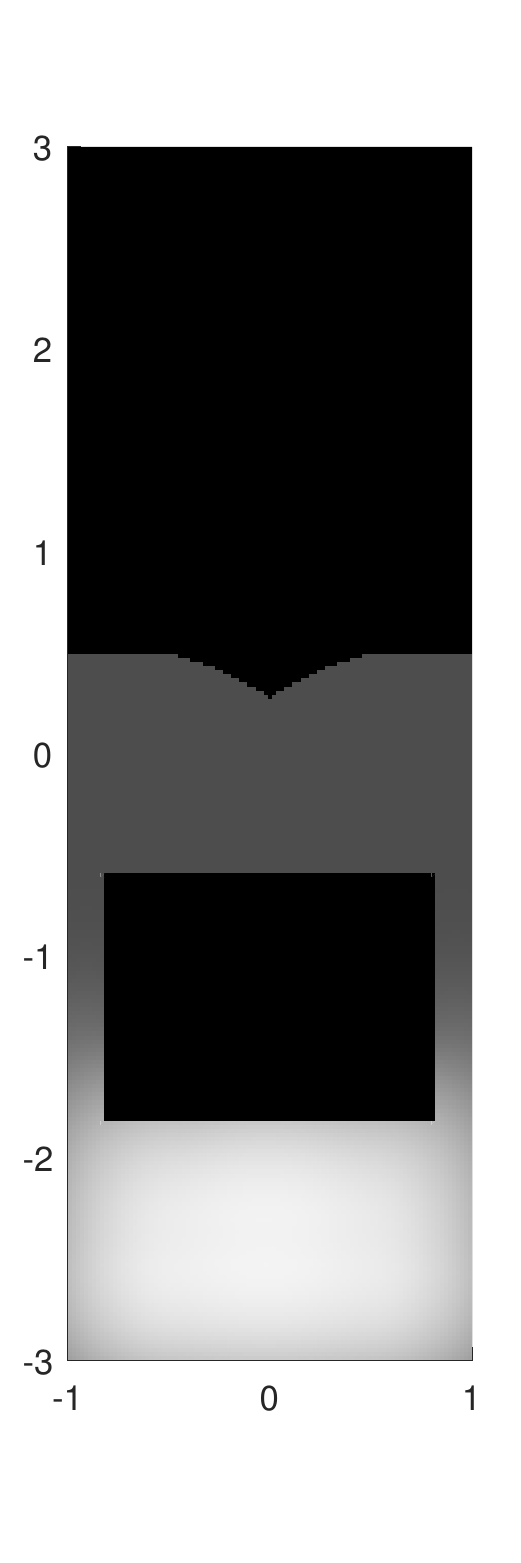}}
  \vspace*{-3mm}
  \caption{Evolution of a density around a obstacle. 
    Grayscale as in \textit{Figure \ref{fig:Speed} }. 
    The iteration was performed on the remaining part of a $100\times 300$, equidistant, quadrilateral grid on $[-1,1]\times[-3,3]$, after the obstacle points were removed. The parameters were $\tau=0.5$, $\eps=0.1$, $m=2$ and again, lightspeed set to $1$. 
    As initial distribution we used $\rho^{0}$ with its mass equally distributed over a small ball with center $(x,y)=(2,1.2)$.}
\end{figure}
The algorithm we used here allows for an easy implementation of impentrable obstacles in the domain. The only thing that has to be altered is the matrix $\xi$. There the columns and rows corresponding to a point lying within the obstacle have to be set to zero and components of $\xi$ corresponding to a pair of points whose connecting line segment crosses the obstacle have to be recalculated (c.f. \eqref{eq:cost_obstcl}). 

In \textit{Figure \ref{fig:Obstacle} } we have realized a impenetrable box and a density flowing around it. Again we have used the step in the grayscale to illustrate the support of $\rho$ and again we can observe the finite speed of propagation. 


\subsubsection{Comparison: Linear diffusion and porous medium diffusion}
\begin{figure}
  \label{fig:Compare}
  \subfigure[Comparison between Linear diffusion and porous medium diffusion with parameter $m=5$.]{\includegraphics[width=0.9\textwidth]{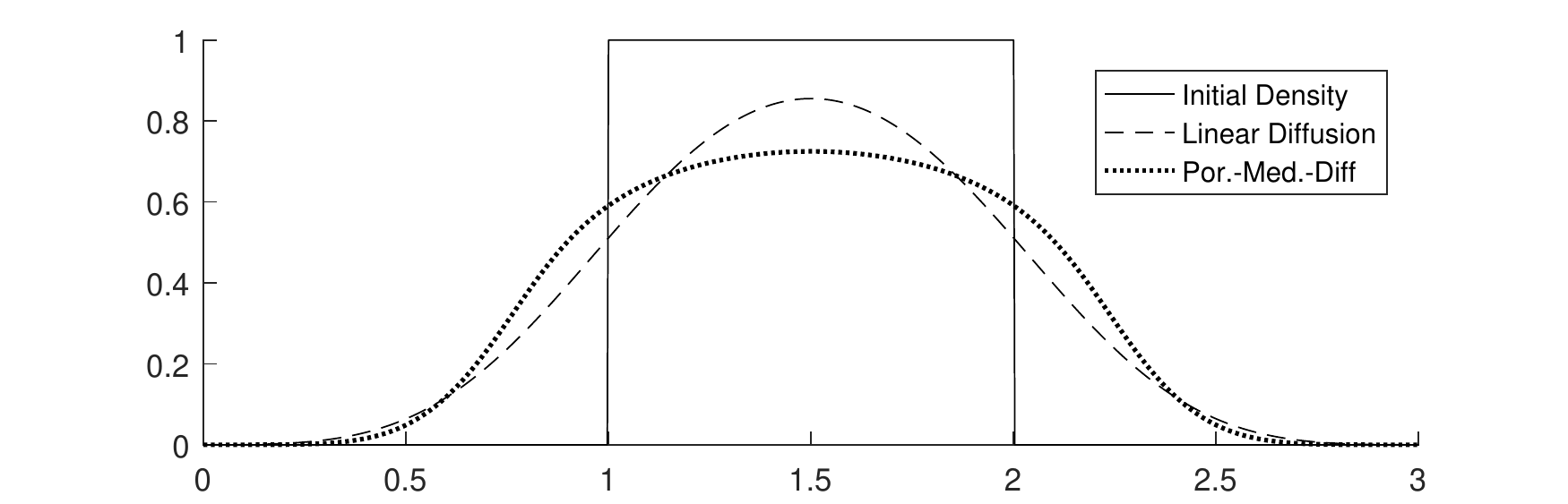}}
  \vspace*{-3mm}
  \subfigure[Magnification of the comparison.]{\includegraphics[width=0.9\textwidth]{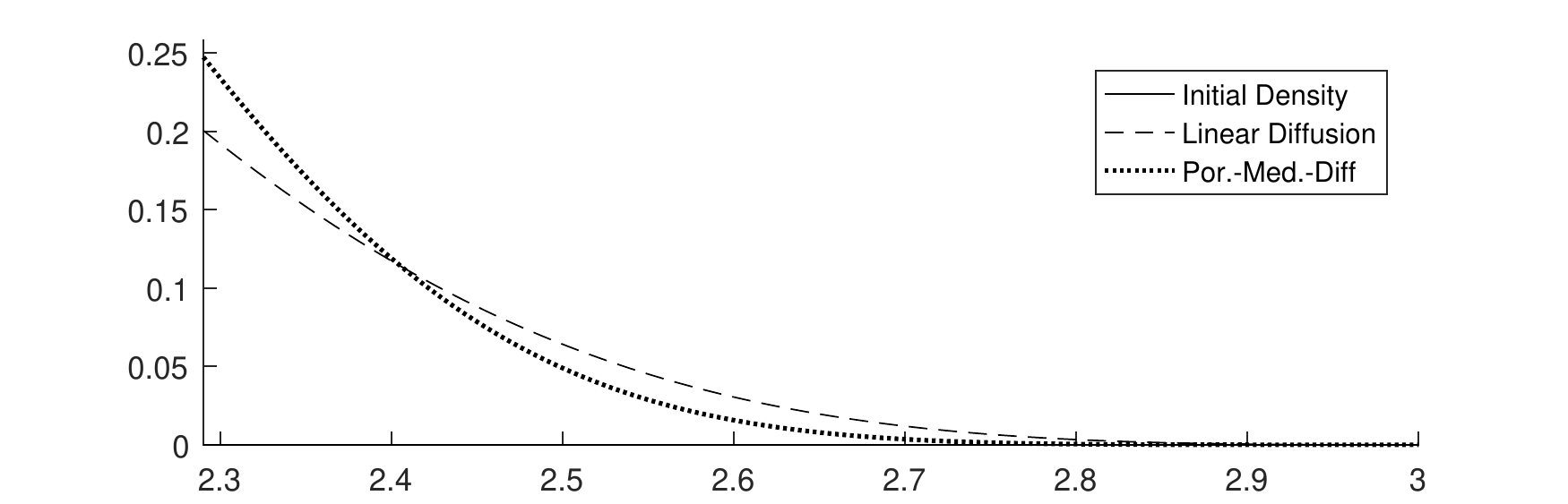}}
  \vspace*{2mm}
  \caption{The iteration for the same discontinuous initial data depicted by a solid line. The Iteration is performed on an equidistand grid with 1000 grid points with time-step $\tau=0.02$ and time horizon $T=1$ and entropic regularization parameter $\varepsilon=0.04$.  }
\end{figure}

The iteration can be carried out with porous medium as well as with linear diffusion. In \textit{Figure \ref{fig:Compare}} some features of the two different diffusions can be compared. The figure shows the result of iterating both with the same initial data. Note that the iteration is already advanced enough that the fronts that can be expected with flux-limitation and such discontinuous initial data are already dispersed. 

Porous medium diffusion disperses the mass faster than linear diffusion where there is a high density and is slower when there is low density which results in the lower density for our porous medium evolution around $x=0$ compared to linear diffusion. On the other hand, as can clearly be seen in the magnification, linear diffusion disperses the mass faster for densities close to zero. 

Finally, though it can not be observed easily in the plots, the support of both, the linear diffusion evolution and the porous medium evolution, expands with the same velocity, which is our lightspeed.

\subsubsection{Edge effect}
\begin{figure}
  \label{fig:Border}
  \subfigure[The initial probability density $\rho^{(0)}$.]{\includegraphics[width=0.49\textwidth]{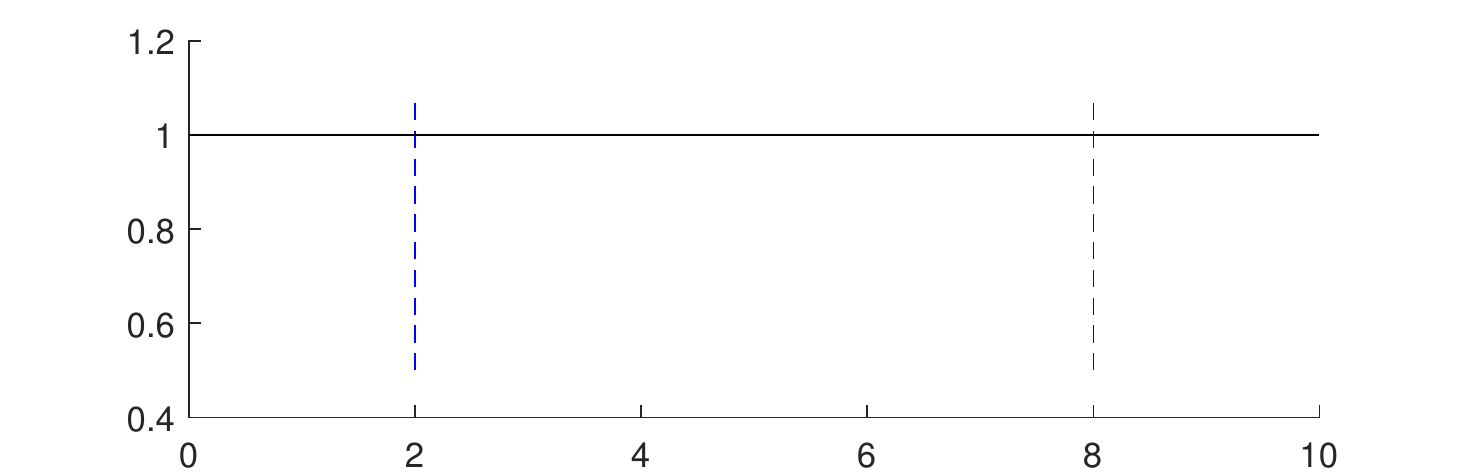}}
  \vspace*{-3mm}
  \subfigure[The density after the first iteration $\rho^{(1)}$.]{\includegraphics[width=0.49\textwidth]{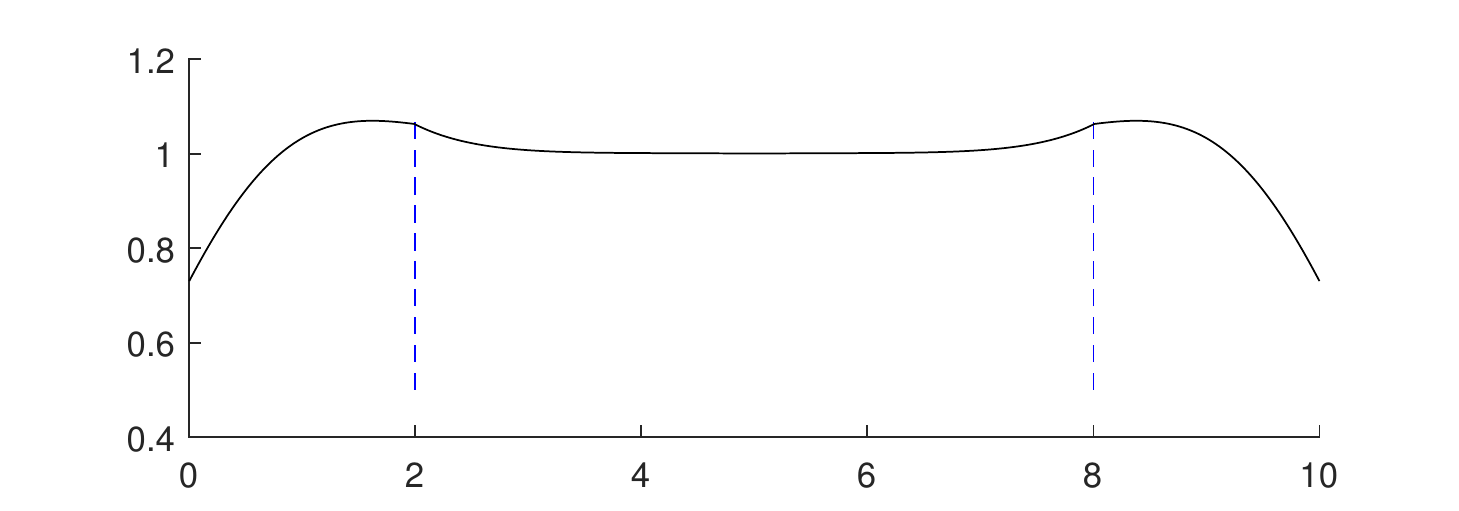}}
  \vspace*{-3mm}
  \subfigure[The density after the fourth iteration $\rho^{(4)}$.]{\includegraphics[width=0.49\textwidth]{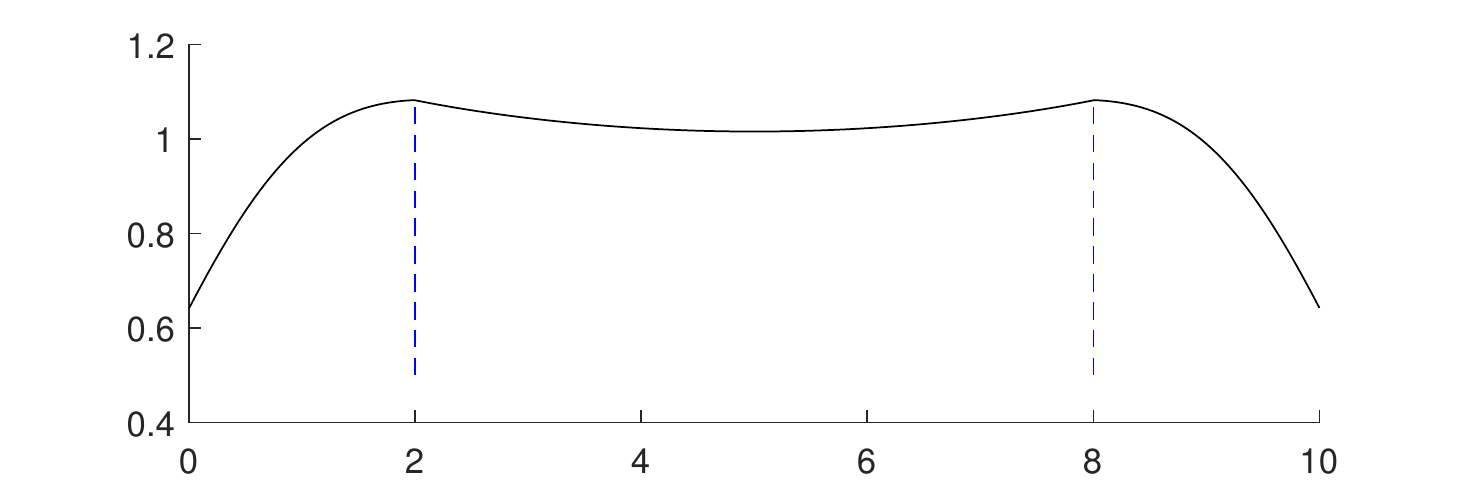}}
  \vspace*{-3mm}
  \subfigure[The steady state corresponding to this initial vector.]{\includegraphics[width=0.49\textwidth]{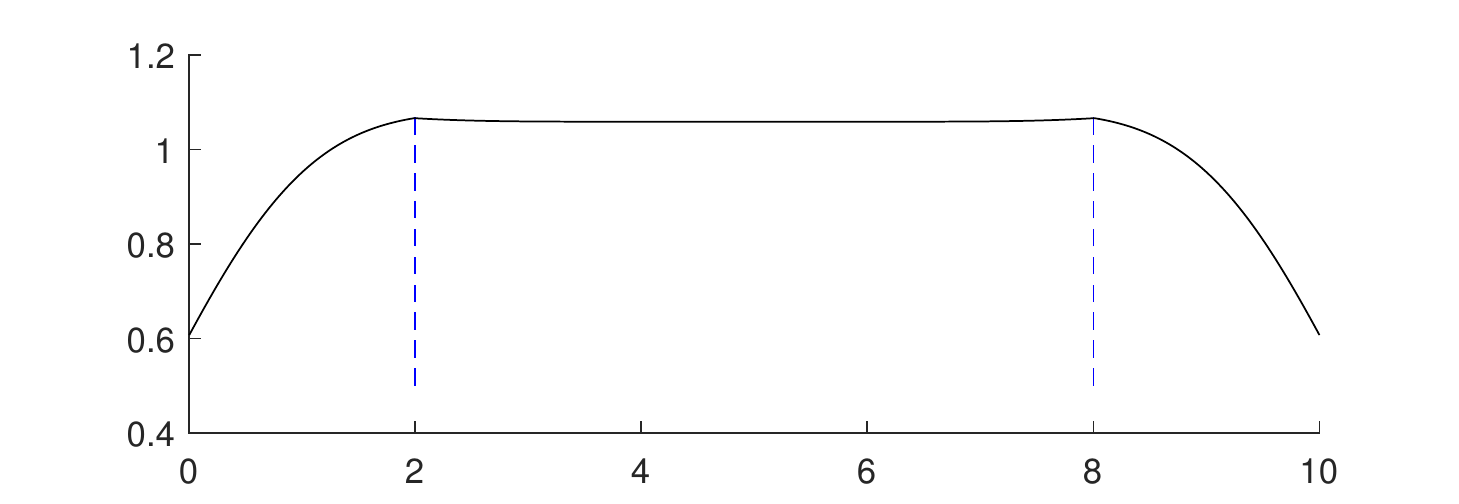}}
  \vspace*{2mm}
  \caption{The edge effect caused by the blurring with the Gibbs kernel. 
    Iteration performed on a 1000 grid points equidistantly distributed on $[0,10]$ with parameters $\tau=2$, $\eps=2$, $m=2$.
    As initial distribution we used $\rho^0_i=1$.
    The horizontal, dotted lines are drawn at $x=0+\tau$ and $10-\tau$ and mark the width of the edge effect.}
\end{figure}
Our last experiment is posed on a one-dimensional interval $[0,10]$,
which is discretized with 1000 intervals of equal length.
The result in Figure \ref{fig:Border}  highlights an undesired effect at the edges:
although we initialize with a uniform distribution (which corresponds to a stationary solution), 
the density becomes non-homogeneous near the boundary points very quickly.
In first order, the solution represents the second marginal of the matrix $\xi$;
since the matrix is ``cut off'' at the boundary, there is a lack of mass near the end points.
The energy introduces a second order effect, which tries to compensate the primary effect
by transporting mass from the bulk to the edges.

This effect is the stronger, the larger the entropic regularization parameter $\eps>0$ is;
the pictures have been produced for a ``huge'' value $\eps=2$.

\bibliographystyle{abbrv}
\bibliography{RHE}

\end{document}